\newtheorem{theorem}{Theorem}[section]
\newtheorem{lemma}{Lemma}[section]
\newtheorem{corollary}{Corollary}[section]
\newtheorem{remark}{Remark}[section]
\newtheorem{definition}{Definition}[section]
\newtheorem{problem}{Problem}[section]
\newtheorem{proposition}{Proposition}[section]
\definecolor{darkblue}{rgb}{0.0,0.0,0.6}
\title{
	\bf Is Noisy Data a Blessing in Disguise? A Distributionally Robust Optimization Perspective\thanks{
		This manuscript is a preprint and is currently under review for publication. Chung-Han Hsieh (\href{mailto:ch.hsieh@mx.nthu.edu.tw}{ch.hsieh@mx.nthu.edu.tw}) and Rong Gan (\href{mailto:GanZong@gmail.com}{GanZong@gmail.com}) are both with the Department of Quantitative Finance, National Tsing Hua University, Hsinchu, Taiwan. This work was partly supported by the National Science and Technology
		Council (NSTC) under Grants: NSTC113--2628--E--007--015-- and NSTC114--2628--E--007--006--.
		} 
}
\author[1]{Chung-Han Hsieh}
\author[1]{Rong Gan}
\affil[1]{\small Department of Quantitative Finance, National Tsing Hua University, Hsinchu, Taiwan, 30004. }
\date{}
\begin{document}
	
	\maketitle
	
	\begin{abstract}
		Noisy data are often viewed as a challenge for decision-making. This paper studies a distributionally robust optimization (DRO) that shows how such noise can be systematically incorporated.  
		Rather than applying DRO to the noisy empirical distribution, we construct ambiguity sets over the \emph{latent} distribution by centering a Wasserstein ball at the noisy empirical distribution in the observation space and taking its inverse image through a known noise kernel. 
		We validate this inverse-image construction by deriving a tractable convex reformulation and establishing rigorous statistical guarantees, including finite-sample performance and asymptotic consistency.
		Crucially, we demonstrate that, under mild conditions, noisy data may be a ``blessing in disguise." Our noisy-data DRO model is less conservative than its direct counterpart, leading to provably higher optimal values and a lower price of ambiguity. 
		In the context of fair resource allocation problems, we demonstrate that this robust approach can induce solutions that are structurally more equitable. 
		Our findings suggest that managers can leverage uncertainty by harnessing noise as a source of robustness rather than treating it as an obstacle, producing more robust and strategically balanced decisions.
	\end{abstract}

\providecommand{\keywords}[1]
{
	\small	
	\textbf{\textit{Keywords---}} #1
}
\keywords{Distributionally Robust Optimization, Noisy-Data DRO, Optimization under Uncertainty, Fair Resource Allocation, Fairness}

\maketitle

\section{Introduction}

Let $X$ be a random vector governed by an unknown, \textit{latent} distribution $\mathbb{F}$. The goal of a stochastic optimization problem is to maximize expected utility with respect to  $\mathbb{F}$: $\sup_{w \in \mathcal{W}} \mathbb{E}^{\mathbb{F}} [ U(w, X)]$, where~$w$ is a decision variable chosen from a feasible set $\mathcal{W}$, and~$U(\cdot)$ is a concave and nondecreasing utility function. 
A key challenge of the stochastic optimization problem is that the latent distribution $\mathbb{F}$ is typically unobservable and must be inferred from the data. 
Ideally, one would have access to independent and identically distributed (i.i.d.) samples drawn directly from the latent distribution $\mathbb{F}$,  
However, in many practical scenarios, data are inevitably corrupted by a noise process that blurs, distorts, or degrades the underlying information; e.g., see \cite{farokhi2023distributionally, van2024efficient, cai2025distributionally}.

While stochastic optimization under uncertainty has received considerable attention, the challenges posed by noisy data remain less studied relative to the noiseless-sample setting, and ignoring noise can lead to seriously misleading decisions.
In finance, for example, true signals are often inherently obscured by noise, making it challenging for investors to estimate expected returns for stocks or portfolios, see, e.g.,~\cite{black1986noise, banerjee2015signal}.
In other domains, data can be intentionally disturbed by noise from a known distribution to preserve privacy, see~\cite{muralidhar1999general, dwork2006our}, thereby further increasing uncertainty and complicating decision making.
Under these circumstances, where the latent distribution~$\mathbb{F}$ is unknown and the observed data are further corrupted by noise, making reliable decisions becomes even more difficult.

To cope with limited knowledge of $\mathbb{F}$, a common approach is to approximate it using the empirical distribution, denoted by $\widehat{\mathbb{F}}$.
However, even in the ideal case when noiseless data from $\mathbb{F}$ is available to construct $\widehat{\mathbb{F}}$, the resulting optimal decision can still perform poorly out-of-sample due to overfitting, the phenomenon known as the \textit{optimizer's curse}, see~\cite{smith2006optimizer, gao2023finite}, and the situation may become even worse when observations are corrupted by noise.
To address this issue, an emerging trend is to incorporate data uncertainty from real-world applications into decision-making. 
Early influential work, such as \cite{bertsimas2004price, ben2009robustOptimization, ben2013robust}, proposed a robust framework that enhances decision robustness while allowing control over~conservatism.

A broader framework addressing this uncertainty is distributionally robust optimization (DRO), e.g., see \cite{wiesemann2014distributionally}.
DRO accounts for uncertainty in the underlying distribution by considering an \textit{ambiguity set} of distribution $\mathbb{F}$, denoted by $\mathcal{P}$, see~\cite{rahimian2022frameworks, kuhn2025distributionally} for a comprehensive survey.
Rather than relying on a single estimated distribution as in the original stochastic optimization, DRO seeks decisions that maximize the expected utility under the worst-case distribution within the ambiguity set $\mathcal{P}$. 
That is, DRO reformulates the stochastic optimization into a max-min framework: $\sup_{w \in \mathcal{W}} \inf_{\mathbb{F} \in \mathcal{P}} \mathbb{E}^{\mathbb{F}} [ U(w, X)]$.
Interestingly, in the presence of noise, we will demonstrate that DRO can somehow turn the noise itself into an advantage, leading to higher optimal values and, consequently, a lower price of ambiguity.

DRO models critically depend on how ambiguity sets are defined to capture distributional uncertainty. We first briefly review common types. Ambiguity sets are typically classified as \textit{discrepancy-based} or \textit{moment-based} \cite{rahimian2022frameworks}.
Discrepancy-based ambiguity sets consider a neighborhood around a nominal distribution using a discrepancy measure, including: 
$(i)$ Optimal transport discrepancy such as the Wasserstein metric \cite{mohajerin2018data, blanchet2022distributionally, gao2023distributionally, li2023wasserstein, zhang2024short}, 
$(ii)$ $\phi$-divergences like Kullback-Leibler divergence \cite{hu2013kullback} and $\chi^2$-divergence \cite{levy2020large};
$(iii)$ Total variation distance \cite{jiang2018risk, farokhi2023distributionally};
and $(iv)$ Polyhedral set~\cite{hsieh2024solving}.
On the other hand, moment-based ambiguity sets include all distributions whose moments satisfy certain properties \cite{li2013portfolio}.

\begin{figure}[htbp]
\centering
\includegraphics[width=0.5\linewidth]{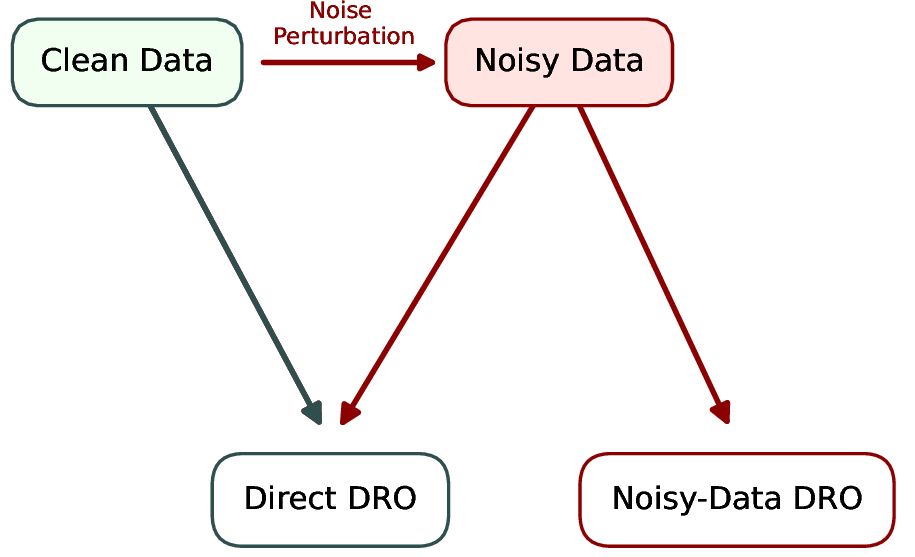}
\caption{Conceptual Framework for Direct and Noisy-Data DRO.}
\label{fig: dro_framework}
\end{figure}

	
	
	


While the DRO approach is designed to handle distributional ambiguity, it generally focuses on distributional uncertainty rather than noise in the observed data. 
Most data-driven DRO work assumes access to noiseless data, typically requiring~i.i.d. samples from~$\mathbb{F}$, e.g., \cite{mohajerin2018data, li2023wasserstein, hsieh2024cost, zhang2025short}. 
In practice, however, one often observes only the corrupted empirical distributions.
Contaminated data is typically addressed with robust statistics, see \cite{blanchet2024distributionally}, which deals with outliers or measurement errors during the pre-decision stage. However, DRO for noisy data has been less explored.
To address this gap in the DRO framework, we propose a DRO approach that builds ambiguity sets over the latent distribution, obtained as the inverse image of a noisy distribution centered at the noisy empirical distribution through a known noise kernel.
For clarity, we illustrate the conceptual setting of DRO construction in Figure~\ref{fig: dro_framework}. 
The idealized (clean) world assumes access to clean data, i.e., the empirical distribution~$\widehat{\mathbb{F}}$.
In contrast, the realistic (noisy) world reflects practical data collection scenarios, where one typically observes only a corrupted distribution~$\widehat{\mathbb{F}}^\star$, resulting from~$\widehat{\mathbb{F}}$ being perturbed by a known noise kernel $\mathbb{O}$.
In this paper, we investigate two modeling approaches:
$(i)$ Direct DRO, which treats the data as if it were clean---regardless of whether it is clean or noisy---by directly applying the standard DRO formulation without accounting for noise;
$(ii)$ Noisy-Data DRO, which explicitly acknowledges the presence of noise and incorporates it into the DRO formulation. This is the central focus of our work.

Due to the presence of noise, even with a sufficiently large number of samples, we can only construct the noisy empirical distribution $\widehat{\mathbb{F}}^\star$. 
The noisy empirical distribution, however, converges to the corrupted distribution $\mathbb{F}^\star$---the noisy counterpart of the latent (clean) distribution $\mathbb{F}$, corrupted via a noise distribution $\mathbb{O}$---rather than to $\mathbb{F}$ itself; see  \cite{van2024efficient}.
A natural response to this issue is to enlarge the ambiguity set so that it contains the true underlying distribution. For example, \cite{farokhi2022distributionally} proposes increasing the radius of the Wasserstein ball centered at the noisy empirical distribution $\widehat{\mathbb{F}}^\star$ to cope with locally differentially private data. 
However, such an approach can be overly conservative, as the enlarged ambiguity set may contain distributions that deviate significantly from the latent distribution $\mathbb{F}$.
An alternative strategy is to construct the ambiguity set in the noisy observation space and then \emph{pull it back} to the latent distribution as the inverse image via the noise kernel $\mathbb{O}$, as in~\cite{farokhi2023distributionally}, which uses the total variation distance to construct the ambiguity set.
This paper, instead of using the total variation distance, adopts the same inverse-image principle with Wasserstein balls, which is more suitable for distributions with non-overlapping supports; see \cite{villani2008optimal, peyre2019computational}.

By formulating a noisy-data DRO problem with the ambiguity set using the Wasserstein metric, centered at the noisy empirical distribution~$\widehat{\mathbb{F}}^\star$, we significantly extend the results in~\cite{mohajerin2018data}. 
In fact, their approach can be recovered as a special case of our formulation with noise-free data.
We further demonstrate that, surprisingly, our noisy-data DRO model can leverage the presence of noise to its advantage. Noise effectively enlarges the feasible set, which can lead to higher optimal values. Moreover, we introduce a new measure, the \textit{price of ambiguity} (POA), to quantify the cost of increasing ambiguity. Interestingly, we find that noisy-data DRO can achieve a lower POA, meaning that noise can actually aid decision-making rather than hinder it in the DRO framework.

\subsection{Contributions of the Paper}
We propose a novel noisy-data DRO framework that builds a Wasserstein-ball ambiguity set around~$\widehat{\mathbb{F}}^\star$ in the noisy observation space and then pulls back to the latent (noise-free) space via the inverse image of the noise operator $T_\mathbb{O}$ induced by a known noise kernel. Our main contributions are as follows:
We demonstrate that $(i)$ noisy data may be a ``blessing in disguise": Under mild conditions (e.g., mean-preserving noise), the noisy-data DRO admits a larger decision-feasible region than a direct DRO that treats noisy samples as clean, leading to a higher robust optimal value and incurring a smaller price of ambiguity;  
$(ii)$ we provide a complete theoretical validation of our framework, including a tractable convex reformulation, finite-sample performance guarantees, and asymptotic consistency, and $(iii)$ in the context of fair resource allocation, increasing ambiguity moves solutions toward more equitable allocations while preserving higher aggregate utility than enlarging the fairness parameter directly.

\textit{Paper Organization.}
Section~\ref{sec: Preliminaries} formalizes the DRO problem with noisy data and defines the inverse-image Wasserstein ambiguity set. Section~\ref{section: Statistical Guarantees for Noisy-Data DRO} establishes its statistical performance guarantees, such as finite-sample guarantees and asymptotic consistency. Subsequently,
Section~\ref{section: main results} presents the main results, including a tractable convex reformulation of the noisy-data DRO and offers robust optimality comparisons with its direct DRO, and the introduction of the price of ambiguity along with its dominance properties.
Section~\ref{section: Illustrative Examples} validates the theoretical results through a fair resource allocation example in communication networks.

\subsection{Notations}

Henceforth, we use $\mathbb{R}$ to denote the set of real numbers and $\mathbb{R}_{+}$ to denote the set of nonnegative real numbers.
The set of nonnegative integers is denoted as $\mathbb{N}$.
Random objects are defined on a probability space $(\Omega, \mathcal{F}, \mathbb{P})$, where $\Omega$ is the sample space,~$\mathcal{F}$ is the $\sigma$-algebra of events, and $\mathbb{P}$ is the probability measure. 
The standard inner product of two vectors $x, y \in \mathbb{R}^n$ is denoted as~$\langle x, y \rangle := x^\top y$.
We denote the $\ell_p$-norm of $x$ as~$\|x\|_p :=  (\sum_{i=1}^n |x_i|^p)^{1/p}$, and its associated dual norm as $\|x\|_{p,*} := \sup_{\|z\|_1 \leq 1} \langle x, z \rangle$. Specifically, we write $\|\cdot\|_2$ explicitly for the Euclidean norm; $\|\cdot\|$ denotes $\ell_1$ throughout
The product of two probability distributions~$\mathbb{F}_1$ defined on~$\mathfrak{X}_1$ and~$\mathbb{F}_2$ defined on $\mathfrak{X}_2$, is represented as the distribution $\mathbb{F}_1 \otimes \mathbb{F}_2$ on $\mathfrak{X}_1 \times \mathfrak{X}_2$.

\section{Preliminaries}\label{sec: Preliminaries}
This section provides the necessary preliminaries and then formulates the new noisy-data DRO problem.

\subsection{Noisy Data} \label{subsection: noisy data}
Consider a random vector $X \in \mathbb{R}^n$ with an unknown distribution $\mathbb{F}$ supported on a known compact convex set
$
\mathfrak{X} := \{x \in \mathbb{R}^n : x_{\min} \leq x \leq x_{\max} \},
$
where the bounds $x_{\min}, \, x_{\max} \in \mathbb{R}^n$ are defined componentwise as
$x_{\min} := [x_{\min, 1}, \cdots, x_{\min, n}]^\top$
and
$x_{\max} := [x_{\max, 1}, \cdots,  x_{\max,n}]^\top$.
Let $\widehat{X}_1, \ldots , \widehat{X}_N \in \mathbb{R}^n$ denote~$N$ i.i.d. random samples drawn from the latent (noise-free) distribution $\mathbb{F}$.
Define the noise-free empirical distribution
$
\widehat{\mathbb{F}} := \frac{1}{N} \sum_{j=1}^N \delta_{\widehat{X}_j},
$ 
where~$\delta_{\widehat{X}_j}$ is the Dirac-delta~measure at $\widehat{X}_j$.

However, these i.i.d. noise-free samples $\widehat{X}_j \sim \mathbb{F}$ are not observed directly in practice. 
Instead, due to data uncertainty or measurement errors, one typically only has access to their noisy counterparts~$\widehat{X}^{\star}_j \in \mathbb{R}^n$.

To formalize this, assume the decision maker observes $N$ i.i.d. \emph{noisy} samples. Specifically, for~$j = 1,\dots, N$,  let $\widehat{X}_j^{\star}$  denote the noisy observations of $\widehat{X}_j$, obtained through the conditional distribution~$\mathbb{O}(\cdot \mid \widehat{X}_j)$. The noisy samples are supported on a convex compact set~$\mathfrak{X}^{\star} \subseteq \mathbb{R}^n$, which satisfies $ \mathfrak{X} \subseteq \mathfrak{X}^\star$. 
That is, for~$j = 1,\dots, N$, we have
\begin{align} \label{eq: iid noisy random samples and its distributions}
\begin{cases}
	\widehat{X}_j \sim \mathbb{F}; \\
	\widehat{X}^{\star}_j \sim \mathbb{O}(\cdot \mid \widehat{X}_j)
\end{cases}
\end{align}
The marginal distribution of the noisy random vector $X^{\star}$, call it $\mathbb{F}^{\star}$, can be computed as follows: 
For any measurable set $A \subseteq \mathfrak{X}^\star$,  applying the law of total probability yields
\begin{align} \label{eq: marginal distribution of noisy return}
\mathbb{F}^{\star}(A) 
& := \mathbb{P} (X^\star \in A) \notag\\
& = \int_{x \in \mathfrak{X}}\mathbb{P}( X^{\star} \in A \mid X = x) d\mathbb{F}(x) \notag\\
& = \int_{x \in \mathfrak{X}} \mathbb{O}(A \mid x) d\mathbb{F}(x), 
\end{align}
where $\mathbb{O}(A \mid x):= \mathbb{P}( X^{\star} \in A \mid X = x)$
is the conditional distribution of the event $\{X^\star \in A\}$ with $A \subset \mathfrak{X}^\star$, given the true $X=x$.  
Assume the noise model is \emph{additive}, i.e., $X^\star:= X + E$, where the noise term $E$ is supported on a set $\mathfrak{E}$ and is independent of the latent variable $X$. 
Let $\mathbb{F}_E$ denote the distribution of $E$. In this case, the conditional probability $\mathbb{O}(A \mid x)$ is given by
$$
\mathbb{O}(A \mid x) = \mathbb{P}(X^\star \in A \mid X= x) = \mathbb{P}(x + E \in A \mid X= x)  = \mathbb{P}(E \in A - x) := \mathbb{F}_E(A - x),
$$
where $A - x := \{ z \in \mathbb{R}^n : z + x \in A\}$.
Substituting this into Equation~\eqref{eq: marginal distribution of noisy return}, the marginal distribution of $X^\star$ is the convolution of the distributions of $X$ and~$E$:
\begin{align} 
\mathbb{F}^{\star}(A) 
& = \int_{x\in \mathfrak{X}} \mathbb{F}_E( A- x) \, d\mathbb{F}(x) \notag\\
& = (\mathbb{F}_{E} \star \mathbb{F} )(A).
\end{align}
Additionally, the noisy empirical distribution, denoted as~$\widehat{\mathbb{F}}^\star$, is defined based on $N$ random samples~$\widehat{X}_1^\star, \ldots, \widehat{X}_N^\star$, independently drawn from the noisy distribution~$\mathbb{F}^{\star}$. That is,
$
\widehat{\mathbb{F}}^{\star} := \frac{1}{N} \sum_{j=1}^N \delta_{\widehat{X}^{\star}_j}.
$

\medskip
\begin{remark}[Noiseless Case]\rm 
In the noiseless case, the decision maker observes the noise-free data:~$\widehat{X}_j^\star = \widehat{X}_j$ for all $j$. Hence, the conditional distribution $\mathbb{O}(\cdot \mid x)$ becomes a Dirac measure centered at~$x$, i.e., $\mathbb{O}(A \mid x) = \delta_x(A)$.
Equation~\eqref{eq: marginal distribution of noisy return} then reduces to $\mathbb{F}^{\star}(A) 
= \int_{x \in \mathfrak{X}} \delta_x(A) d\mathbb{F}(x) = \mathbb{F}(A)$, which shows that the observed marginal becomes the true distribution, i.e., $\mathbb{F}^\star = \mathbb{F}$.
\end{remark}

\subsection{Wasserstein Ambiguity Set with Noise}
We now introduce the Wasserstein ambiguity set, which models uncertainty about the true (noiseless) distribution $\mathbb{F}$, based on a finite number of noisy observations.  We begin by recalling the definition of the \emph{Wasserstein metric}, as defined in~\cite{kuhn2025distributionally}, and use it to construct the Wasserstein ball, which serves as an ambiguity set.

\begin{definition}[Wasserstein Metric]\rm
Let $\mathbb{F}, \mathbb{F}' \in \mathcal{M}(\mathbb{R}^n)$ be any two distributions supported on subsets $\mathfrak{X}, \mathfrak{X}' \subseteq \mathbb{R}^n$, respectively, where $\mathcal{M}(\cdot)$ denotes the set of probability measures supported on the indicated space.
The \emph{Wasserstein metric of order $1$}, denoted by $d(\mathbb{F}, \mathbb{F}') : \mathcal{M}(\mathbb{R}^n) \times \mathcal{M}(\mathbb{R}^n) \rightarrow \mathbb{R}_+$, quantifies the discrepancy between two distributions $\mathbb{F}$ and $\mathbb{F}'$. With respect to the $\ell_1$-norm~$\|\cdot\|$, it is defined as:
\begin{align*}
	d(\mathbb{F}, \mathbb{F}') &:= \min_{\Pi \in \mathcal{C}(\mathbb{F}, \mathbb{F}')}  \mathbb{E}^{(\xi, \xi') \sim \Pi} \left[ \| \xi - \xi' \| \right]
\end{align*}
where $\mathcal{C}(\mathbb{F}, \mathbb{F}')$ is the set of all joint distributions $\Pi$ on $\mathbb{R}^n \times \mathbb{R}^n$  with marginals $\mathbb{F}$ and $\mathbb{F}'$.
\end{definition}

Given a reference distribution $\mathbb{F}' \in \mathcal{M}(\mathfrak{X}')$ and radius $\varepsilon > 0$, the associated Wasserstein ball is defined as
$
\mathcal{B}_{\varepsilon} ({\mathbb{F}}') := \left\{ \mathbb{F} \in \mathcal{M}(\mathfrak{X}): d(\mathbb{F}, {\mathbb{F}}') \leq \varepsilon \right\}.
$
Thus, the Wasserstein ball centered at the \textit{noisy empirical distribution $\widehat{\mathbb{F}}^\star$} defined in Section~\ref{subsection: noisy data} is given by
$$
\mathcal{B}^\star_{\varepsilon} (\widehat{\mathbb{F}}^{\star})  := \left\{ \mathbb{F}^{\star} \in \mathcal{M}(\mathfrak{X}^\star): d(\mathbb{F}^{\star}, \widehat{\mathbb{F}}^{\star}) \leq \varepsilon \right\}.
$$ 
To recover ambiguity over the latent (noise-free) distribution $\mathbb{F}$, we define the noise operator $T_\mathbb{O}: \mathcal{M}(\mathfrak{X}) \to \mathcal{M}(\mathfrak{X}^\star)$ with $(T_\mathbb{O}(\mathbb{F}))(A)  := \int_{x \in \mathfrak{X}} \mathbb{O}(A \mid x) d \mathbb{F}(x)$ for any measurable set $A$, resulting in $\mathbb{F}^\star = T_\mathbb{O}(\mathbb{F})$. Then, the corresponding ambiguity~set over $\mathbb{F}$ induced by the Wasserstein ball on $\mathbb{F}^\star$ can be defined as the inverse image of the Wasserstein ball centered at the noisy empirical distribution~$\widehat{\mathbb{F}}^\star$: 
\begin{align}\label{eq: noisy Wasserstein ball}
\mathcal{B}_{\varepsilon, \mathbb{O}} (\widehat{\mathbb{F}}^{\star}) 
& := T_\mathbb{O}^{-1}( \mathcal{B}_{\varepsilon}^\star (\widehat{\mathbb{F}}^{\star})) \notag \\
&
= \left\{ \mathbb{F} \in \mathcal{M}(\mathfrak{X}) :  d(T_\mathbb{O}(\mathbb{F}), \widehat{\mathbb{F}}^\star) \leq \varepsilon \right\}
= \left\{ \mathbb{F} \in \mathcal{M}(\mathfrak{X}) :  \mathbb{F}^\star  \in \mathcal{B}_{\varepsilon}^\star (\widehat{\mathbb{F}}^{\star}) \right\}.
\end{align}
This set contains all latent distributions $\mathbb{F}$, whose corresponding noisy distribution, $\mathbb{F}^\star = T_\mathbb{O}(\mathbb{F})$ is within a Wasserstein distance of $\varepsilon$ from the noisy empirical distribution $\widehat{\mathbb{F}}^\star$.

\subsection{DRO Problem with Noisy Data}
We now formulate the distributionally robust optimization (DRO) problem faced by a decision maker who only observes noisy data. 

The decision maker does not observe the latent $X_j$, but rather noisy observations $X^\star_j$, whose empirical distribution $\widehat{\mathbb{F}}^\star$ was introduced in Section~\ref{subsection: noisy data}. 
The latent distribution $\mathbb{F}$ is assumed to lie within the ambiguity set $\mathcal{B}_{ \varepsilon, \mathbb{O}}(\widehat{\mathbb{F}}^\star)$ defined in~\eqref{eq: noisy Wasserstein ball}.
The decision maker adopts a \textit{distributionally robust optimization} (DRO) approach and selects a decision $w \in \mathcal{W}$ to maximize worst-case expected utility under all latent distributions $\mathbb{F} \in \mathcal{B}_{ \varepsilon, \mathbb{O}}(\widehat{\mathbb{F}}^\star)$, where $\mathcal{W} \subseteq \mathbb{R}^n$ denotes the compact feasible set of decisions.
This leads to the following noisy-data DRO problem.

\medskip
\begin{problem}[Noisy-Data DRO Problem] \rm
Given a continuous, concave, and nondecreasing utility function $U: \mathbb{R} \to \mathbb{R}$, a Wasserstein radius~$\varepsilon >0 $, and a noisy empirical distribution $\widehat{\mathbb{F}}^\star,$ we consider the following distributionally robust optimization problem under noisy information:
\begin{align}\label{eq: noise DRO}
	g_{\rm noise}^*(\varepsilon) := 
	\sup_{w \in \mathcal{W}} \, 
	\inf_{\mathbb{F} \in \mathcal{B}_{ \varepsilon, \mathbb{O}}(\widehat{\mathbb{F}}^{\star})} \mathbb{E}^{\mathbb{F}}  \left[  U \left( \langle w, X \rangle \right) \right],
\end{align} 
where the expectation operator is taken with respect to the latent (noiseless) distribution~$\mathbb{F}$. 
\end{problem}

\begin{remark} \rm
The objective $g_{\text{noise}}^*(\cdot)$ captures the worst-case expected utility over the Wasserstein ambiguity set of distributions induced by noisy data. This framework reflects a risk-aware decision-making approach under partial observability.  Henceforth, the optimal solution, if it exists, is denoted by $w_{\text{noise}}^* \in \mathcal{W}$.
\end{remark}

\section{Statistical Guarantees for Noisy-Data DRO} \label{section: Statistical Guarantees for Noisy-Data DRO}
This section provides the theoretical validation for the noisy-data DRO model formulated in Section~\ref{sec: Preliminaries}. We establish new results that the model is statistically well-behaved by deriving finite-sample performance guarantees and then proving its asymptotic consistency.

\subsection{Finite-Sample Guarantees}
Let $\widehat{w}_{\text{noise}, N}^{*}$ be the optimal solution of the noisy-data DRO problem~\eqref{eq: noise DRO}, obtained from an in-sample dataset $\widehat{\mathfrak{X}}^\star_N$ consisting of i.i.d. noisy samples $\widehat{X}_1^{\star}, \dots, \widehat{X}_N^{\star}$.
The out-of-sample performance of~$\widehat{w}_{\text{noise}, N}^{*}$ is defined as $\mathbb{E}^\mathbb{F} [U(\langle \widehat{w}_{\text{noise}, N}^{*}, X \rangle)]$, which is the expected utility of $\widehat{w}_{\text{noise}, N}^{*}$ when evaluated on a new sample drawn from $\mathbb{F}$.
Consistent with the DRO literature,~\cite{mohajerin2018data, gao2023finite}, we establish an out-of-sample performance guarantee below to demonstrate that our ``pull-back" model is well-behaved. Specifically, let $\widehat{g}_{\text{noise}, N}^{*}$ denote the in-sample optimal values of~\eqref{eq: noise DRO}, we~seek:
\begin{align}\label{eq: finite sample guarantee}
\mathbb{P}^{\star N} \{ \widehat{\mathfrak{X}}^{\star}_N : \widehat{g}^{* }_{\text{noise}, N} \leq \mathbb{E}^\mathbb{F} [U(\langle \widehat{w}_{\text{noise}, N}^{*}, X \rangle)] \} \geq 1 - \beta,
\end{align}
where $\mathbb{P}^{\star N}$ denotes the $N$-fold product distribution of a distribution $\mathbb{F}^{\star}$ on $\mathfrak{X}^\star$, the in-sample dataset~$ \widehat{\mathfrak{X}}^{\star}_N = \{\widehat{X}_1^{\star}, \dots, \widehat{X}_N^{\star} \}$ is a random variable governed by $\mathbb{P}^{\star N} \in \mathcal{M}(\mathfrak{X}^{\star N})$, where the support~$\mathfrak{X}^{\star N}$ is the $N$-fold Cartesian product of set~$\mathfrak{X}^{\star}$, and $\beta \in (0,1)$ is a confidence parameter.

\begin{theorem}[Measure Concentration]\label{theorem: Measure concentration}
For any choice of $a>1$, and all $N \geq 1$, $n \neq 2$, and~$\varepsilon >0$, the following two statements hold.

$(i)$  We have
\begin{align}\label{ineq: measure concentration}
	\mathbb{P}^{\star N} \{ \widehat{\mathfrak{X}}^{\star}_N : d(\mathbb{F}^{\star}, \widehat{\mathbb{F}}^{\star}) \geq \varepsilon \} \leq
	\begin{cases}
		c_1 \exp (- c_2 N \varepsilon^{\max\{n,2\}}) &\text{ if } \varepsilon \leq 1;\\
		c_1 \exp (-c_2 N \varepsilon^a) &\text{ if } \varepsilon > 1,
	\end{cases}
\end{align}
where $c_1, c_2$ are positive constants that only depend on $a$,~$n$, and the finite moment $A:=\mathbb{E}^{\mathbb{F}^\star}[\exp \|X^\star\|^a].$

$(ii)$ Let $\varepsilon_N (\beta)$ denote the smallest radius of the Wasserstein ball that $\mathbb{F}^\star$ lies inside the ball with confidence at least $1 - \beta$. Then
\begin{align*}
	\varepsilon_N (\beta) =
	\begin{cases}
		\left( \frac{\log(c_1 \beta^{-1})}{c_2 N}\right)^{ \tfrac{1}{\max \{n,2\}} } &\text{ if } N \geq \frac{\log(c_1 \beta^{-1})}{c_2}; \\
		\left( \frac{\log(c_1 \beta^{-1})}{c_2 N}\right)^{ \tfrac{1}{ a} } & \text{ if } N < \frac{\log(c_1 \beta^{-1})}{c_2} .
	\end{cases}
\end{align*}
\end{theorem}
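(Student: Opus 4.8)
The plan is to reduce the statement to the classical measure-concentration theorem of Fournier and Guillin for empirical distributions under the $1$-Wasserstein metric, applied \emph{in the observation space} $\mathfrak{X}^\star$ rather than in the latent space. The crucial observation is that, although the ambiguity set $\mathcal{B}_{\varepsilon,\mathbb{O}}(\widehat{\mathbb{F}}^\star)$ lives over the latent space, the event in \eqref{ineq: measure concentration} concerns only $d(\mathbb{F}^\star,\widehat{\mathbb{F}}^\star)$, i.e., the Wasserstein distance between the noisy population distribution and the noisy empirical distribution. By the sampling model \eqref{eq: iid noisy random samples and its distributions}, each $\widehat{X}^\star_j$ has marginal $\mathbb{F}^\star = T_\mathbb{O}(\mathbb{F})$ and the $\widehat{X}^\star_j$ are mutually independent, so $\widehat{\mathbb{F}}^\star = \frac1N \sum_{j=1}^N \delta_{\widehat{X}^\star_j}$ is exactly the empirical measure of $N$ i.i.d. samples from $\mathbb{F}^\star$; the inverse-image construction plays no role in this particular estimate.

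For part $(i)$, I would first note that $\mathbb{F}^\star$ is supported on the compact set $\mathfrak{X}^\star$, so the exponential moment $A := \mathbb{E}^{\mathbb{F}^\star}[\exp\|X^\star\|^a]$ is finite for every $a>1$; this finiteness is the only hypothesis needed to invoke the concentration bound. I would then apply the Fournier--Guillin inequality, in the form used in \cite{mohajerin2018data}, to the i.i.d. sample $\widehat{X}^\star_1,\dots,\widehat{X}^\star_N \sim \mathbb{F}^\star$. For $n\neq 2$ this yields constants $c_1,c_2>0$ depending only on $a$, $n$, and $A$ such that $\mathbb{P}^{\star N}\{ d(\mathbb{F}^\star,\widehat{\mathbb{F}}^\star)\ge\varepsilon\}$ is bounded by $c_1\exp(-c_2 N\varepsilon^{\max\{n,2\}})$ when $\varepsilon\le 1$ and by $c_1\exp(-c_2 N\varepsilon^a)$ when $\varepsilon>1$, which is precisely \eqref{ineq: measure concentration}. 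The exclusion $n\neq 2$ is exactly the dimension at which the Fournier--Guillin rate carries an additional logarithmic factor, which we sidestep rather than treat separately.

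For part $(ii)$, I would simply invert the tail bound. Writing $\beta$ for the target failure probability, in the regime $\varepsilon\le 1$ solving $c_1\exp(-c_2 N\varepsilon^{\max\{n,2\}}) = \beta$ gives $\varepsilon = \bigl(\log(c_1\beta^{-1})/(c_2 N)\bigr)^{1/\max\{n,2\}}$, while in the regime $\varepsilon>1$ solving $c_1\exp(-c_2 N\varepsilon^{a}) = \beta$ gives $\varepsilon = \bigl(\log(c_1\beta^{-1})/(c_2 N)\bigr)^{1/a}$. It then remains to determine which branch is active: the first expression is $\le 1$ if and only if $\log(c_1\beta^{-1})/(c_2 N)\le 1$, i.e.\ $N \ge \log(c_1\beta^{-1})/c_2$, and symmetrically the second is $>1$ if and only if $N < \log(c_1\beta^{-1})/c_2$. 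This is exactly the case split in the statement, and setting $\varepsilon_N(\beta)$ equal to the corresponding value gives the smallest Wasserstein radius for which $\mathbb{F}^\star \in \mathcal{B}^\star_\varepsilon(\widehat{\mathbb{F}}^\star)$ holds with probability at least $1-\beta$.

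I do not expect a serious obstacle here: the content of the theorem is genuinely a statement in the noisy observation space, and the main point requiring care is precisely the recognition that none of the pull-back machinery is needed for this step — it is the later results (tractable reformulation and consistency) that transfer concentration in $\mathfrak{X}^\star$ into guarantees over the latent ambiguity set. The only routine checks I would still make explicit are the finiteness of $A$ (immediate from compactness of $\mathfrak{X}^\star$) and the measurability of $\widehat{\mathfrak{X}}^\star_N \mapsto d(\mathbb{F}^\star,\widehat{\mathbb{F}}^\star)$, so that the probabilities in \eqref{eq: finite sample guarantee} and \eqref{ineq: measure concentration} are well defined; the latter follows from continuity of the $1$-Wasserstein distance in its arguments.
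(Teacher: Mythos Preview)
Your proposal is correct and follows essentially the same approach as the paper: part~$(i)$ is obtained by applying the Fournier--Guillin/Mohajerin--Kuhn concentration bound directly to the i.i.d.\ noisy samples $\widehat{X}^\star_j \sim \mathbb{F}^\star$ (the paper simply cites \cite[Theorem~3.4]{mohajerin2018data}), and part~$(ii)$ is obtained by equating the tail bound to $\beta$ and solving for $\varepsilon$ in each regime, exactly as you do. Your additional remarks on compactness guaranteeing the light-tail moment and on the irrelevance of the pull-back construction for this particular step are correct and, if anything, more explicit than the paper's own treatment.
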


\begin{proof}
See Appendix~\ref{appendix: Statistical Guarantees for Noisy-Data DRO}.
\end{proof}

\begin{remark} \rm
In general, Theorem~\ref{theorem: Measure concentration} requires the so-called \emph{light-tailed} condition; see \cite[Assumption 3.3]{mohajerin2018data}. However, we note that, because the support of the noisy data, $\mathfrak{X}^\star$, is assumed to be compact, the light-tailed condition required is satisfied~automatically.
\end{remark}

Theorem~\ref{theorem: Measure concentration} and $\varepsilon_N (\beta)$ lead to Corollary~\ref{corollary: Concentration Inequality}.

\begin{corollary}[Concentration Inequality]\label{corollary: Concentration Inequality}
Let  $\varepsilon_N (\beta)$  be the smallest radius of the Wasserstein ball that contains $\mathbb{F}^\star$ with confidence at least $1 - \beta$ as defined in Theorem~\ref{theorem: Measure concentration}.  Then we have
$\mathbb{P}^{\star N} \{ \widehat{\mathfrak{X}}^{\star}_N : \mathbb{F} \in \mathcal{B}_{ \varepsilon_N (\beta), \mathbb{O}}(\widehat{\mathbb{F}}^{\star})\} \geq  1-\beta$.
\end{corollary}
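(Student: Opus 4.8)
The plan is to reduce the claim to the measure-concentration bound of Theorem~\ref{theorem: Measure concentration} by unfolding the definition of the inverse-image ambiguity set. First I would recall from \eqref{eq: noisy Wasserstein ball} that $\mathcal{B}_{\varepsilon, \mathbb{O}}(\widehat{\mathbb{F}}^{\star}) = \{ \mathbb{F}' \in \mathcal{M}(\mathfrak{X}) : d(T_\mathbb{O}(\mathbb{F}'), \widehat{\mathbb{F}}^{\star}) \leq \varepsilon \}$, and from \eqref{eq: marginal distribution of noisy return} together with the definition of the noise operator $T_\mathbb{O}$ that the true latent distribution $\mathbb{F}$ satisfies $T_\mathbb{O}(\mathbb{F}) = \mathbb{F}^{\star}$. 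Substituting $\mathbb{F}' = \mathbb{F}$ shows that the membership event $\{ \mathbb{F} \in \mathcal{B}_{\varepsilon, \mathbb{O}}(\widehat{\mathbb{F}}^{\star}) \}$ coincides, as a subset of the sample space $\mathfrak{X}^{\star N}$, with the event $\{ d(\mathbb{F}^{\star}, \widehat{\mathbb{F}}^{\star}) \leq \varepsilon \}$; note that both events depend on the data only through $\widehat{\mathbb{F}}^{\star}$, which is a deterministic function of $\widehat{\mathfrak{X}}^{\star}_N$.

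The second step is to take $\varepsilon = \varepsilon_N(\beta)$ and invoke Theorem~\ref{theorem: Measure concentration}. By the defining property of $\varepsilon_N(\beta)$ as the smallest Wasserstein radius for which $\mathbb{F}^{\star}$ lies inside the ball with confidence at least $1-\beta$ --- equivalently, by substituting the two regimes of $\varepsilon_N(\beta)$ from part $(ii)$ into the right-hand side of \eqref{ineq: measure concentration} and checking that the resulting bound is at most $\beta$ --- we obtain $\mathbb{P}^{\star N} \{ \widehat{\mathfrak{X}}^{\star}_N : d(\mathbb{F}^{\star}, \widehat{\mathbb{F}}^{\star}) \geq \varepsilon_N(\beta) \} \leq \beta$. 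Passing to the complement and using the event identity from the first step yields $\mathbb{P}^{\star N} \{ \widehat{\mathfrak{X}}^{\star}_N : \mathbb{F} \in \mathcal{B}_{\varepsilon_N(\beta), \mathbb{O}}(\widehat{\mathbb{F}}^{\star}) \} \geq 1-\beta$, which is exactly the assertion.

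There is no substantive analytic obstacle here: the heavy lifting --- the transport-type concentration estimate --- is already carried out in Theorem~\ref{theorem: Measure concentration}, so this corollary is essentially a change-of-variables bookkeeping argument. The only points requiring a little care are $(a)$ verifying that the inverse-image set is defined so that no mass is lost, i.e., that $\mathbb{F} \in \mathcal{M}(\mathfrak{X})$ with $T_\mathbb{O}(\mathbb{F}) = \mathbb{F}^{\star}$ genuinely places the truth in the pulled-back ball whenever $\mathbb{F}^{\star}$ lies in the observation-space ball $\mathcal{B}_{\varepsilon}^{\star}(\widehat{\mathbb{F}}^{\star})$, and $(b)$ matching the non-strict ``$\leq \varepsilon$'' in the definition of the ball with the ``$\geq \varepsilon$'' in \eqref{ineq: measure concentration}, which is immediate since the two events are exact complements. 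One should also keep the standing hypotheses of Theorem~\ref{theorem: Measure concentration} in force throughout --- in particular $n \neq 2$ and the finiteness of $A = \mathbb{E}^{\mathbb{F}^{\star}}[\exp \|X^{\star}\|^a]$, the latter being automatic by compactness of $\mathfrak{X}^{\star}$.
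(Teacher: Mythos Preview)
Your proposal is correct and follows essentially the same route as the paper: unfold the definition of the inverse-image ambiguity set to identify $\{\mathbb{F} \in \mathcal{B}_{\varepsilon,\mathbb{O}}(\widehat{\mathbb{F}}^{\star})\}$ with $\{d(\mathbb{F}^{\star},\widehat{\mathbb{F}}^{\star}) \le \varepsilon\}$, then apply Theorem~\ref{theorem: Measure concentration} at $\varepsilon = \varepsilon_N(\beta)$. One tiny quibble: $\{d \le \varepsilon\}$ and $\{d \ge \varepsilon\}$ are not \emph{exact} complements (they share the boundary $\{d=\varepsilon\}$), but this is harmless since $\mathbb{P}(d \ge \varepsilon) \le \beta$ gives $\mathbb{P}(d < \varepsilon) \ge 1-\beta$, and $\{d < \varepsilon\} \subseteq \{d \le \varepsilon\}$.
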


\begin{proof}
See Appendix~\ref{appendix: Statistical Guarantees for Noisy-Data DRO}.
\end{proof}

\begin{theorem}[Finite Sample Guarantee under Noisy Data]\label{theorem: Finite Sample Guarantee under Noisy Data}
Let $\widehat{g}^*_{\text{noise}, N}$ and $\widehat{w}^*_{\text{noise}, N}$ be the optimal value and optimal solution of the noisy-data DRO problem~\eqref{eq: noise DRO}.
Let $\beta \in (0,1)$. Then the finite sample guarantee~\eqref{eq: finite sample guarantee} holds. That is,
\begin{align*}
	\mathbb{P}^{\star N} \{ \widehat{\mathfrak{X}}^{\star}_N : \widehat{g}^{* }_{\text{noise}, N} \leq \mathbb{E}^\mathbb{F} [U(\langle \widehat{w}_{\text{noise}, N}^{*}, X \rangle)] \} \geq 1 - \beta.
\end{align*}
\end{theorem}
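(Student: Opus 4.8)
The plan is to reduce the statement to the measure-concentration machinery already in hand, since the ``pull-back'' structure of the ambiguity set has been entirely absorbed into Corollary~\ref{corollary: Concentration Inequality}. The governing idea is a simple event inclusion: on the (high-probability) event that the latent distribution $\mathbb{F}$ falls inside the inverse-image Wasserstein ball $\mathcal{B}_{\varepsilon_N(\beta),\mathbb{O}}(\widehat{\mathbb{F}}^{\star})$, the in-sample optimal value is automatically dominated by the out-of-sample performance of the in-sample optimizer. I would therefore fix the Wasserstein radius in~\eqref{eq: noise DRO} to $\varepsilon=\varepsilon_N(\beta)$ (the choice for which the claim is intended) and first record well-posedness: since $\mathcal{W}$ is compact, $U$ is continuous, and $X$ ranges over the compact set $\mathfrak{X}$, the map $w\mapsto \mathbb{E}^{\mathbb{F}'}[U(\langle w,X\rangle)]$ is continuous for each $\mathbb{F}'$, hence $w\mapsto \inf_{\mathbb{F}'\in\mathcal{B}_{\varepsilon_N(\beta),\mathbb{O}}(\widehat{\mathbb{F}}^{\star})}\mathbb{E}^{\mathbb{F}'}[U(\langle w,X\rangle)]$ is upper semicontinuous and attains its supremum over $\mathcal{W}$ at some $\widehat{w}^{*}_{\text{noise},N}$.

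With this in place, the core of the argument is two lines. By definition of the maximizer,
\begin{align*}
\widehat{g}^{*}_{\text{noise},N} \;=\; \inf_{\mathbb{F}'\in\mathcal{B}_{\varepsilon_N(\beta),\mathbb{O}}(\widehat{\mathbb{F}}^{\star})} \mathbb{E}^{\mathbb{F}'}\!\left[U\!\left(\langle \widehat{w}^{*}_{\text{noise},N}, X\rangle\right)\right].
\end{align*}
On the event $\{\widehat{\mathfrak{X}}^{\star}_N:\mathbb{F}\in\mathcal{B}_{\varepsilon_N(\beta),\mathbb{O}}(\widehat{\mathbb{F}}^{\star})\}$, the latent $\mathbb{F}$ is a feasible point of the inner minimization, so the infimum is no larger than the value at $\mathbb{F}$, giving $\widehat{g}^{*}_{\text{noise},N}\le \mathbb{E}^{\mathbb{F}}[U(\langle \widehat{w}^{*}_{\text{noise},N}, X\rangle)]$. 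Hence the success set in~\eqref{eq: finite sample guarantee} contains $\{\widehat{\mathfrak{X}}^{\star}_N:\mathbb{F}\in\mathcal{B}_{\varepsilon_N(\beta),\mathbb{O}}(\widehat{\mathbb{F}}^{\star})\}$, and Corollary~\ref{corollary: Concentration Inequality} bounds the probability of the latter by $1-\beta$; monotonicity of $\mathbb{P}^{\star N}$ then yields the claim.

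The hard part is not the inequality chain — which is essentially forced once Corollary~\ref{corollary: Concentration Inequality} is granted — but the measurability bookkeeping needed for the probability in~\eqref{eq: finite sample guarantee} to even make sense: one must exhibit $\widehat{g}^{*}_{\text{noise},N}$ and, more delicately, $\widehat{w}^{*}_{\text{noise},N}$ as measurable functions of the random sample $\widehat{\mathfrak{X}}^{\star}_N$. I would handle $\widehat{g}^{*}_{\text{noise},N}$ by noting joint measurability of the robust value function in $(w,\widehat{\mathfrak{X}}^{\star}_N)$ together with compactness of $\mathcal{W}$, and obtain a measurable selection $\widehat{w}^{*}_{\text{noise},N}$ via a standard measurable-selection theorem, exactly as in~\cite{mohajerin2018data}. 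Note also that attainment of the inner infimum is never required: feasibility of $\mathbb{F}$ alone suffices for the bound $\widehat{g}^{*}_{\text{noise},N}\le\mathbb{E}^{\mathbb{F}}[\cdot]$, so that subtlety does not arise. Everything beyond these routine checks is immediate from the concentration corollary.
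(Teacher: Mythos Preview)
Your proposal is correct and follows essentially the same approach as the paper: both argue the event inclusion $\{\widehat{\mathfrak{X}}^{\star}_N:\mathbb{F}\in\mathcal{B}_{\varepsilon_N(\beta),\mathbb{O}}(\widehat{\mathbb{F}}^{\star})\}\subseteq\{\widehat{\mathfrak{X}}^{\star}_N:\widehat{g}^{*}_{\text{noise},N}\le\mathbb{E}^{\mathbb{F}}[U(\langle \widehat{w}^{*}_{\text{noise},N},X\rangle)]\}$ via the definition of the inner infimum and then invoke Corollary~\ref{corollary: Concentration Inequality} with monotonicity of the measure. Your additional remarks on well-posedness and measurable selection are more careful than the paper, which simply takes these for granted, but the substantive argument is identical.
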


\begin{proof}
See Appendix~\ref{appendix: Statistical Guarantees for Noisy-Data DRO}.
\end{proof}

\subsection{Asymptotic Consistency}

Motivated by the relationship between noisy and noiseless distribution distances established in \cite[Lemma A.4]{van2024efficient}, the following lemma extends the result to the Wasserstein setting.

\begin{lemma}[Noise to Latent Distance]\label{lemma: Noise to Latent Distance}
Let the noisy counterpart be generated by an additive noise model $X^\star = X + E$, where the noise $E$ has distribution $\mathbb{F}_E$ and is independent of $X$. 
Let $\mathbb{Q},\mathbb{P} \in \mathcal{M}(\mathfrak{X})$ be any two distributions supported on a compact set $\mathfrak{X}$, and $\mathbb{Q}^\star,\mathbb{P}^\star \in \mathcal{M}(\mathfrak{X}^\star)$ be their noisy counterparts, supported on~$\mathfrak{X}^\star$. 
For~$t \in \mathbb{R}^n$, let $\varphi_{\mathbb{F}_E} (t)$ be the characteristic function of the distribution $\mathbb{F}_E$, and assume that the set $\{t \in \mathbb{R}^n: \varphi_{\mathbb{F}_E}(t) = 0\}$ has Lebesgue measure zero. 
For any sequence $\{\mathbb{Q}_N\}_{N=1}^\infty \subset \mathcal{M}(\mathfrak{X})$ and any $\mathbb{P} \in \mathcal{M}(\mathfrak{X})$, we have
$$
\lim_{N \to \infty} d(\mathbb{Q}_N^\star, \mathbb{P}^\star) =0
\; \text{ implies } \;
\lim_{N \to \infty}  d(\mathbb{Q}_N, \mathbb{P}) =0.
$$
\end{lemma}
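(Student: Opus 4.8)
The plan is to work entirely in the Fourier domain, exploiting the fact that the noisy distribution $\mathbb{Q}^\star = \mathbb{F}_E \star \mathbb{Q}$ has characteristic function $\varphi_{\mathbb{Q}^\star}(t) = \varphi_{\mathbb{F}_E}(t)\,\varphi_{\mathbb{Q}}(t)$, so that wherever $\varphi_{\mathbb{F}_E}$ does not vanish we can recover $\varphi_{\mathbb{Q}}$ as the ratio $\varphi_{\mathbb{Q}^\star}/\varphi_{\mathbb{F}_E}$. First I would recall that on the compact set $\mathfrak{X}$, convergence in the Wasserstein-$1$ metric is equivalent to weak convergence (this is standard; it follows from Villani or Kuhn et al.\ cited in the excerpt), so it suffices to show $\mathbb{Q}_N \Rightarrow \mathbb{P}$ weakly, and by Prokhorov's theorem plus tightness (automatic since all measures live on the fixed compact $\mathfrak{X}$) it is enough to show that every weakly convergent subsequence has limit $\mathbb{P}$. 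So take an arbitrary subsequence $\mathbb{Q}_{N_k} \Rightarrow \mathbb{R}$ for some $\mathbb{R} \in \mathcal{M}(\mathfrak{X})$; the goal becomes $\mathbb{R} = \mathbb{P}$.

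The second step is to translate the hypothesis $d(\mathbb{Q}_N^\star, \mathbb{P}^\star) \to 0$ into pointwise convergence of characteristic functions. Since $\mathfrak{X}^\star$ is compact, $d(\mathbb{Q}_N^\star,\mathbb{P}^\star)\to 0$ implies $\mathbb{Q}_N^\star \Rightarrow \mathbb{P}^\star$, hence by L\'evy's continuity theorem $\varphi_{\mathbb{Q}_N^\star}(t) \to \varphi_{\mathbb{P}^\star}(t)$ for every $t \in \mathbb{R}^n$. Likewise, along the chosen subsequence, $\mathbb{Q}_{N_k}\Rightarrow\mathbb{R}$ gives $\varphi_{\mathbb{Q}_{N_k}}(t) \to \varphi_{\mathbb{R}}(t)$ for all $t$. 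Multiplying by the continuous function $\varphi_{\mathbb{F}_E}(t)$ yields $\varphi_{\mathbb{Q}_{N_k}^\star}(t) = \varphi_{\mathbb{F}_E}(t)\varphi_{\mathbb{Q}_{N_k}}(t) \to \varphi_{\mathbb{F}_E}(t)\varphi_{\mathbb{R}}(t)$. Comparing the two limits of the same sequence $\varphi_{\mathbb{Q}_{N_k}^\star}(t)$ forces
\[
\varphi_{\mathbb{F}_E}(t)\,\varphi_{\mathbb{R}}(t) = \varphi_{\mathbb{P}^\star}(t) = \varphi_{\mathbb{F}_E}(t)\,\varphi_{\mathbb{P}}(t)
\qquad \text{for every } t \in \mathbb{R}^n.
\]

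The third step is the identification argument, and this is where the hypothesis on the zero set of $\varphi_{\mathbb{F}_E}$ enters. On the set $Z := \{t : \varphi_{\mathbb{F}_E}(t) = 0\}$, which has Lebesgue measure zero by assumption, we can say nothing directly; but on $\mathbb{R}^n \setminus Z$ we may divide to get $\varphi_{\mathbb{R}}(t) = \varphi_{\mathbb{P}}(t)$. Thus $\varphi_{\mathbb{R}}$ and $\varphi_{\mathbb{P}}$ agree Lebesgue-almost everywhere. Characteristic functions are continuous (indeed uniformly continuous), and two continuous functions that agree almost everywhere agree everywhere — concretely, $Z$ has empty interior since it is Lebesgue-null, so its complement is dense, and continuity extends the equality from a dense set to all of $\mathbb{R}^n$. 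Hence $\varphi_{\mathbb{R}} \equiv \varphi_{\mathbb{P}}$, and by uniqueness of characteristic functions $\mathbb{R} = \mathbb{P}$. Since every weakly convergent subsequence of $\{\mathbb{Q}_N\}$ has the same limit $\mathbb{P}$ and the sequence is tight, the whole sequence converges weakly to $\mathbb{P}$, and therefore $d(\mathbb{Q}_N,\mathbb{P}) \to 0$.

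The main obstacle is purely the measure-theoretic care in step three: one must not merely divide characteristic functions pointwise (illegitimate on $Z$) but instead combine the almost-everywhere equality with the continuity of characteristic functions and the fact that a Lebesgue-null set cannot contain a nonempty open ball, so that its complement is dense. A secondary (minor) point worth stating explicitly is the equivalence of Wasserstein-$1$ convergence and weak convergence on a compact metric space, used in both directions — once to go from the hypothesis $d(\mathbb{Q}_N^\star,\mathbb{P}^\star)\to 0$ to $\varphi_{\mathbb{Q}_N^\star}\to\varphi_{\mathbb{P}^\star}$, and once to conclude $d(\mathbb{Q}_N,\mathbb{P})\to 0$ from $\mathbb{Q}_N \Rightarrow \mathbb{P}$; compactness of $\mathfrak{X}$ and $\mathfrak{X}^\star$ makes both steps and the tightness requirement routine.
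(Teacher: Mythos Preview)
Your proof is correct and shares the paper's core strategy---pass to characteristic functions via L\'evy, use the convolution identity $\varphi_{\mathbb{Q}^\star}=\varphi_{\mathbb{F}_E}\varphi_{\mathbb{Q}}$, and exploit that the zero set of $\varphi_{\mathbb{F}_E}$ is Lebesgue-null so its complement is dense---but the two proofs differ in how they extend the identification from this dense set to all of $\mathbb{R}^n$. The paper (after a contradiction setup) argues directly that $\varphi_{\mathbb{Q}_N}(t)\to\varphi_{\mathbb{P}}(t)$ for every $t$: it first obtains this on the dense set $D=\{\varphi_{\mathbb{F}_E}\neq 0\}$, and then uses the uniform Lipschitz bound $|\varphi_\mu(t)-\varphi_\mu(s)|\le R\|t-s\|_2$ (valid for all $\mu$ supported in the compact $\mathfrak{X}$, with $R=\sup_{x\in\mathfrak{X}}\|x\|_2$) together with an $\varepsilon/3$ argument to push the convergence from $D$ to every $t$. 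You instead invoke Prokhorov on the tight family $\{\mathbb{Q}_N\}$ to extract a subsequential weak limit $\mathbb{R}$, deduce $\varphi_{\mathbb{R}}=\varphi_{\mathbb{P}}$ on the dense complement of $Z$, and then compare two \emph{fixed} continuous functions rather than a converging sequence---so you never need the equicontinuity estimate. Your route is a little more elementary in that step; the paper's is a little more direct since it never passes to subsequences. Both rely identically on compactness of $\mathfrak{X}$ and $\mathfrak{X}^\star$ (for the Wasserstein\,$\Leftrightarrow$\,weak equivalence and, respectively, for equicontinuity or tightness).
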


\begin{proof}
See Appendix~\ref{appendix: Statistical Guarantees for Noisy-Data DRO}.
\end{proof}

We now establish the novel asymptotic consistency in the presence of noise.

\begin{theorem}[Asymptotic Consistency]\label{theorem: Asymptotic Consistency} 
Let $\beta_N \in (0, 1)$ for $N \in \mathbb{N}$ which satisfies $\sum_{N=1}^{\infty} \beta_N < \infty$ and let $\varepsilon_N(\beta_N)$ be chosen such that $\lim_{N \to \infty} \varepsilon_N (\beta_N) = 0$. Let $\widehat{g}^*_{\text{noise}, N}$ and $\widehat{w}_{\text{noise}, N}^*$ denote the in-sample optimal value and an optimal solution of the noisy-data DRO problem~\eqref{eq: noise DRO} with $N$ samples, 
and let $g^*_{SO} := \sup_{w\in \mathcal{W}} \mathbb{E}^\mathbb{F} [U( \langle w, X \rangle)]$ be the optimal value of the stochastic optimization problem under the latent (noiseless) distribution $\mathbb{F}$.

$(i)$
If $U ( \langle w, x\rangle)$ is $L$-Lipschitz continuous on $\mathfrak{X}$,
the sequence $\widehat{g}_{\text{noise}, N}^*$ converge towards $ g^*_{SO}$ with probability one, i.e., 
$$
\mathbb{P}^{\star, \infty} \{ \widehat{\mathfrak{X}}^{\star}_N:\lim_{N \to \infty} \widehat{g}^*_{\text{noise}, N} = g^*_{SO} \} = 1.
$$

$(ii)$ In addition, for the compact feasible set $\mathcal{W}$,
any accumulation point of sequence~$\{\widehat{w}^*_{\text{noise}, N}: N \in \mathbb{N}\}$ is an optimal solution for $g^*_{SO}$ probability one.
\end{theorem}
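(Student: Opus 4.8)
The plan is to mirror the two-sided consistency argument for data-driven Wasserstein DRO (cf.\ \cite{mohajerin2018data}), with the extra ``pull-back'' layer handled through Lemma~\ref{lemma: Noise to Latent Distance}. First I would isolate the almost-sure event on which all estimates are carried out. Since $\sum_{N=1}^\infty \beta_N < \infty$, the Borel--Cantelli lemma together with part~$(i)$ of Theorem~\ref{theorem: Measure concentration} (equivalently Corollary~\ref{corollary: Concentration Inequality}), applied with radius $\varepsilon_N(\beta_N)$, yields that with $\mathbb{P}^{\star,\infty}$-probability one there is a finite $N_0$ such that for every $N \ge N_0$ one has simultaneously $d(\mathbb{F}^\star, \widehat{\mathbb{F}}^\star_N) \le \varepsilon_N(\beta_N)$ and hence $\mathbb{F} \in \mathcal{B}_{\varepsilon_N(\beta_N), \mathbb{O}}(\widehat{\mathbb{F}}^\star_N)$. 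Everything below is a deterministic argument on this event, so the conclusions hold almost surely.

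For part~$(i)$, the upper bound is immediate: on the event above, for every $w \in \mathcal{W}$ the latent $\mathbb{F}$ is feasible for the inner infimum, so $\inf_{\mathbb{F}' \in \mathcal{B}_{\varepsilon_N(\beta_N), \mathbb{O}}(\widehat{\mathbb{F}}^\star_N)} \mathbb{E}^{\mathbb{F}'}[U(\langle w, X\rangle)] \le \mathbb{E}^{\mathbb{F}}[U(\langle w, X\rangle)]$, and taking the supremum over $w$ gives $\widehat{g}^*_{\text{noise}, N} \le g^*_{SO}$ for all $N \ge N_0$, i.e. $\limsup_N \widehat{g}^*_{\text{noise}, N} \le g^*_{SO}$. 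For the lower bound I fix $\delta > 0$ and a $\delta$-optimal $w_\delta \in \mathcal{W}$ with $\mathbb{E}^{\mathbb{F}}[U(\langle w_\delta, X\rangle)] \ge g^*_{SO} - \delta$, so that $\widehat{g}^*_{\text{noise}, N} \ge h_N := \inf_{\mathbb{F}' \in \mathcal{B}_{\varepsilon_N(\beta_N), \mathbb{O}}(\widehat{\mathbb{F}}^\star_N)} \mathbb{E}^{\mathbb{F}'}[U(\langle w_\delta, X\rangle)]$. Since $\mathfrak{X}$ is compact, $\mathcal{M}(\mathfrak{X})$ is weakly compact, the ambiguity set is weakly closed, and $\mathbb{F}' \mapsto \mathbb{E}^{\mathbb{F}'}[U(\langle w_\delta, X\rangle)]$ is weakly continuous, so $h_N$ is attained at some $\mathbb{F}'_N$ (otherwise one works with near-minimizers, which suffices). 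By the triangle inequality for the Wasserstein metric in the observation space, $d((\mathbb{F}'_N)^\star, \mathbb{F}^\star) \le d((\mathbb{F}'_N)^\star, \widehat{\mathbb{F}}^\star_N) + d(\widehat{\mathbb{F}}^\star_N, \mathbb{F}^\star) \le 2\varepsilon_N(\beta_N) \to 0$. Invoking Lemma~\ref{lemma: Noise to Latent Distance} with $\mathbb{Q}_N := \mathbb{F}'_N$ and $\mathbb{P} := \mathbb{F}$ gives $d(\mathbb{F}'_N, \mathbb{F}) \to 0$, and since $U(\langle w_\delta, \cdot\rangle)$ is $L$-Lipschitz on $\mathfrak{X}$, Kantorovich--Rubinstein duality yields $|\mathbb{E}^{\mathbb{F}'_N}[U(\langle w_\delta, X\rangle)] - \mathbb{E}^{\mathbb{F}}[U(\langle w_\delta, X\rangle)]| \le L\, d(\mathbb{F}'_N, \mathbb{F}) \to 0$. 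Hence $\liminf_N \widehat{g}^*_{\text{noise}, N} \ge \lim_N h_N = \mathbb{E}^{\mathbb{F}}[U(\langle w_\delta, X\rangle)] \ge g^*_{SO} - \delta$; letting $\delta \downarrow 0$ and combining with the upper bound proves $\lim_N \widehat{g}^*_{\text{noise}, N} = g^*_{SO}$ almost surely.

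For part~$(ii)$, let $\bar w$ be an accumulation point of $\{\widehat{w}^*_{\text{noise}, N}\}$, attained along a subsequence $N_k$; since $\mathcal{W}$ is compact, $\bar w \in \mathcal{W}$. On the almost-sure event of the first paragraph, feasibility of $\mathbb{F}$ for the inner problem gives, for $k$ large, $\widehat{g}^*_{\text{noise}, N_k} \le \mathbb{E}^{\mathbb{F}}[U(\langle \widehat{w}^*_{\text{noise}, N_k}, X\rangle)]$. Letting $k \to \infty$, the left-hand side tends to $g^*_{SO}$ by part~$(i)$, while the right-hand side tends to $\mathbb{E}^{\mathbb{F}}[U(\langle \bar w, X\rangle)]$ by joint continuity of $(w,x) \mapsto U(\langle w, x\rangle)$ on the compact set $\mathcal{W} \times \mathfrak{X}$ and dominated convergence. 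Therefore $g^*_{SO} \le \mathbb{E}^{\mathbb{F}}[U(\langle \bar w, X\rangle)] \le \sup_{w \in \mathcal{W}} \mathbb{E}^{\mathbb{F}}[U(\langle w, X\rangle)] = g^*_{SO}$, so $\bar w$ is optimal for $g^*_{SO}$, as claimed.

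The step I expect to be the main obstacle is the lower bound in part~$(i)$: transferring the vanishing Wasserstein discrepancy from the noisy observation space back to the latent space. The inverse image $T_{\mathbb{O}}^{-1}$ of a shrinking Wasserstein ball need not shrink uniformly (this is a deconvolution-type instability), so one cannot argue by a naive ``radius bound'' on $\mathcal{B}_{\varepsilon_N(\beta_N), \mathbb{O}}(\widehat{\mathbb{F}}^\star_N)$; the resolution is the sequential argument through Lemma~\ref{lemma: Noise to Latent Distance}, which is precisely where the non-vanishing-characteristic-function hypothesis on the noise $\mathbb{F}_E$ is used. A secondary technical point is guaranteeing that the inner infima are attained, which is handled by weak compactness of $\mathcal{M}(\mathfrak{X})$ and weak continuity of the bounded, continuous integrand (and may otherwise be bypassed via near-minimizers).
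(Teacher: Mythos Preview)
Your proposal is correct and follows essentially the same route as the paper: Borel--Cantelli to secure the almost-sure event $\mathbb{F}\in\mathcal{B}_{\varepsilon_N(\beta_N),\mathbb{O}}(\widehat{\mathbb{F}}^\star_N)$, the upper bound from feasibility of $\mathbb{F}$, the lower bound via a $\delta$-optimal $w_\delta$, a (near-)minimizer $\mathbb{F}'_N$, the triangle inequality in the observation space, Lemma~\ref{lemma: Noise to Latent Distance} to pull back to the latent space, and Kantorovich--Rubinstein; part~$(ii)$ is the same subsequence/continuity argument. The only cosmetic differences are that the paper works with $\delta$-near-minimizers rather than attained minimizers, wraps the application of Lemma~\ref{lemma: Noise to Latent Distance} in a (redundant) subsequence contradiction, and uses reverse Fatou plus upper semicontinuity in part~$(ii)$ where you invoke dominated convergence.
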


\begin{proof}
See Appendix~\ref{appendix: Statistical Guarantees for Noisy-Data DRO}.
\end{proof}

\section{Theoretical Reformulation and Dominance Analysis} \label{section: main results}
This section provides a tractable convex reformulation of the noisy-data DRO problem. We show how noise, typically viewed as detrimental, can expand the feasible set and improve the optimal value. We provide theoretical tractability and robust optimality comparisons for noisy-data DRO and direct DRO models. In this section, we consider the additive noise model $X^\star = X + E$ with $\mathbb{E}[\|E\|]<\infty$.\footnote{ Under this additive noise model assumption, the map $x \mapsto \int \|x^\star - \widehat{x}_j^\star\| d\mathbb{O}(x^\star \mid x) = \mathbb{E}\|x + E - \widehat{x}_j^\star\|$, which is 1-Lipschitz, hence continuous in $x$. More generally, it is possible to consider the noise kernel $\mathbb{O}$ to be \emph{Feller} kernel, i.e., $x \mapsto \int f(x^\star) d\mathbb{O}(x^\star \mid x)$ is continuous for every bounded continuous $f$.}

\subsection{Reformulation of the Noisy-Data DRO Problem}\label{sec: Convex Approximation of the Noisy-Data DRO Problem}

We now show that the noisy-data DRO problem can be reformulated as a tractable program, thereby generalizing the existing result in~\cite{mohajerin2018data}.

\begin{lemma}[A Convex Reformulation of Noisy-Data DRO]\label{lemma: An Equivalent Representation of Noisy-Data DRO} 
For any $\varepsilon > 0$, the noisy-data DRO problem~\eqref{eq: noise DRO} can be reformulated as the following convex program: 
\begin{align}\label{eq: convex DRO noise}
	&\displaystyle \sup_{w \in \mathcal{W},\lambda \geq 0, a_j, \forall j} - \lambda \varepsilon +  \frac{1}{N} \sum_{j=1}^{N} a_j\\
	&{\rm s.t.} \;  \displaystyle \inf_{x \in \mathfrak{X}} \left( U \left( \langle w, x \rangle \right) + \lambda \int_{x^\star \in \mathfrak{X}^\star}  \|x^{\star}-\widehat{x}_j^\star\| d\mathbb{O}(x^{\star} \mid x) \right)  \geq a_j,\quad j = 1, \dots, N, \notag
\end{align}
where $\widehat{x}_j^\star$ is the $j$-th observed noisy sample vector.
\end{lemma}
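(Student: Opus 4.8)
The plan is to follow the Lagrangian-duality scheme of \cite{mohajerin2018data}, adding one new ingredient to absorb the inverse-image operator $T_\mathbb{O}$. Fix $w \in \mathcal{W}$ and concentrate on the inner worst-case value $V(w) := \inf_{\mathbb{F} \in \mathcal{B}_{\varepsilon,\mathbb{O}}(\widehat{\mathbb{F}}^\star)} \mathbb{E}^{\mathbb{F}}[U(\langle w, X\rangle)]$; the outer supremum over $w$ is carried along unchanged and becomes the variable $w$ in~\eqref{eq: convex DRO noise}. Write $c_j(x) := \int_{\mathfrak{X}^\star} \|x^\star - \widehat{x}_j^\star\|\, d\mathbb{O}(x^\star \mid x)$ for the effective cost of moving a latent point $x$ to the $j$-th observed atom; under the additive model $c_j$ is $1$-Lipschitz in $x$ and, under the Feller assumption, at least continuous, so all the infima below are attained on the compact sets $\mathfrak{X}$ and $\mathfrak{X}^\star$.

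\emph{Step 1 (transport form of the ambiguity set).} By definition $\mathbb{F} \in \mathcal{B}_{\varepsilon,\mathbb{O}}(\widehat{\mathbb{F}}^\star)$ iff $d(T_\mathbb{O}(\mathbb{F}),\widehat{\mathbb{F}}^\star) \le \varepsilon$, and since $\widehat{\mathbb{F}}^\star$ is the uniform empirical measure on $\widehat{x}_1^\star,\dots,\widehat{x}_N^\star$, every coupling in the Wasserstein problem disintegrates over its second marginal into $N$ sub-measures. Combining this with the kernel identity $(T_\mathbb{O}(\mathbb{F}))(A) = \int \mathbb{O}(A\mid x)\, d\mathbb{F}(x)$, I would first note the easy direction: any $\mathbb{F}$ of the form $\mathbb{F}=\tfrac1N\sum_j \mathbb{Q}_j$ with $\mathbb{Q}_j\in\mathcal{M}(\mathfrak{X})$ and $\tfrac1N\sum_j\int_{\mathfrak{X}} c_j(x)\,d\mathbb{Q}_j(x)\le\varepsilon$ lies in $\mathcal{B}_{\varepsilon,\mathbb{O}}(\widehat{\mathbb{F}}^\star)$, because the splitting exhibits an admissible coupling between $T_\mathbb{O}(\mathbb{F})$ and $\widehat{\mathbb{F}}^\star$. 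Then, by a gluing/disintegration argument, I would argue the converse — every $\mathbb{F}\in\mathcal{B}_{\varepsilon,\mathbb{O}}(\widehat{\mathbb{F}}^\star)$ admits such a splitting without changing the value of $\int_{\mathfrak{X}} U(\langle w,x\rangle)\,d\mathbb{F}(x)$ — so that, since the objective depends only on the latent marginal,
\[
V(w)=\inf\Big\{\tfrac1N\textstyle\sum_{j=1}^N\int_{\mathfrak{X}} U(\langle w,x\rangle)\,d\mathbb{Q}_j(x)\ :\ \mathbb{Q}_j\in\mathcal{M}(\mathfrak{X}),\ \tfrac1N\textstyle\sum_{j=1}^N\int_{\mathfrak{X}} c_j(x)\,d\mathbb{Q}_j(x)\le\varepsilon\Big\}.
\]

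\emph{Step 2 (duality and pointwise reduction).} Attaching a multiplier $\lambda \ge 0$ to the radius constraint yields a Lagrangian that is affine in $(\mathbb{Q}_j)_j$ and in $\lambda$; since $(\mathbb{Q}_j)_j$ ranges over the weak-$\ast$ compact set $\mathcal{M}(\mathfrak{X})^N$ and $U(\langle w,\cdot\rangle)$, $c_j(\cdot)$ are continuous on the compact set $\mathfrak{X}$, a Sion-type minimax theorem permits interchanging $\inf_{(\mathbb{Q}_j)}$ and $\sup_{\lambda\ge0}$, giving
\[
V(w) = \sup_{\lambda \ge 0}\Big\{\, -\lambda\varepsilon + \tfrac1N\textstyle\sum_{j=1}^N \inf_{\mathbb{Q}_j \in \mathcal{M}(\mathfrak{X})} \int_{\mathfrak{X}} \big(U(\langle w,x\rangle) + \lambda\, c_j(x)\big)\, d\mathbb{Q}_j(x) \,\Big\}.
\]
Each inner infimum of a linear functional over $\mathcal{M}(\mathfrak{X})$ is attained at a Dirac mass, hence equals $\inf_{x\in\mathfrak{X}}\big(U(\langle w,x\rangle) + \lambda\, c_j(x)\big)$. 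Introducing epigraph variables $a_j \le \inf_{x\in\mathfrak{X}}(U(\langle w,x\rangle) + \lambda\, c_j(x))$ recasts the objective as $-\lambda\varepsilon + \tfrac1N\sum_j a_j$, and reinstating $\sup_{w\in\mathcal{W}}$ produces exactly~\eqref{eq: convex DRO noise}. Concavity and monotonicity of $U$, convexity of $\mathfrak{X}$ and $\mathcal{W}$, and linearity of $\lambda\mapsto\lambda c_j(x)$ make $\{(w,\lambda,a_j): a_j\le U(\langle w,x\rangle)+\lambda c_j(x)\}$ convex for every fixed $x$, so the resulting semi-infinite program is convex; the choice $\mathbb{O}(\cdot\mid x)=\delta_x$ gives $c_j(x)=\|x-\widehat{x}_j\|$ and recovers the reformulation of \cite{mohajerin2018data}.

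\emph{Main obstacle.} I expect the delicate point to be the converse claimed in Step~1 — that \emph{every} $\mathbb{F}\in\mathcal{B}_{\varepsilon,\mathbb{O}}(\widehat{\mathbb{F}}^\star)$ can be re-expressed as a splitting $\tfrac1N\sum_j\mathbb{Q}_j$ whose per-atom transport cost is measured by $c_j$, i.e.\ that the composition of the noise transport with an optimal transport from $T_\mathbb{O}(\mathbb{F})$ to $\widehat{\mathbb{F}}^\star$ can be realised at latent-per-point expected cost $c_j(x)$. This is precisely where the Feller/$1$-Lipschitz property of $\mathbb{O}$, the compactness of $\mathfrak{X}$ and $\mathfrak{X}^\star$, and the attainment of the relevant infima have to be exploited carefully; once the ground cost $c_j$ is in hand, the minimax interchange and the Dirac reduction in Step~2 are routine.
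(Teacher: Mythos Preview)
Your plan coincides with the paper's proof almost step by step: rewrite the inner problem as an infimum over $(\mathbb{F},\Pi)$, disintegrate the coupling $\Pi$ over the atoms of $\widehat{\mathbb{F}}^\star$, pass to per-atom latent measures $\alpha_j$ (your $\mathbb{Q}_j$) with cost $\int c_j\,d\alpha_j$, apply Sion's minimax theorem, collapse the inner measure-infimum to a pointwise $\inf_{x\in\mathfrak X}$, and finish with epigraph variables $a_j$. The convexity verification is also the same.

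The point you single out as the ``main obstacle'' --- the converse inclusion in Step~1, namely that \emph{every} $\mathbb{F}\in\mathcal{B}_{\varepsilon,\mathbb{O}}(\widehat{\mathbb{F}}^\star)$ admits a splitting $\tfrac1N\sum_j\mathbb{Q}_j$ whose per-atom cost is exactly $\int c_j\,d\mathbb{Q}_j$ --- is precisely the step the paper treats most lightly. The paper writes that any $\mathbb{F}$ can be decomposed as $\tfrac1N\sum_j\alpha_j$, observes that ``a sufficient way'' to meet the marginal constraint $\tfrac1N\sum_j G_j^\star=T_\mathbb{O}(\mathbb{F})$ is to take $G_j^\star=T_\mathbb{O}(\alpha_j)$, and then declares the problem ``equivalent'' to the $\{\alpha_j\}$ formulation. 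In other words, the paper \emph{restricts} to couplings that factor through the kernel and proceeds as if this restriction were without loss; it does not supply the gluing/disintegration argument you anticipate, nor does it explicitly prove the reverse inequality. So you should not expect to find a sharper treatment of this step in the paper --- your plan is already at least as careful as the original, and your instinct that this is where the real work (or a real assumption) lives is correct.
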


\begin{proof}
See Appendix~\ref{appendix: proof in Section: main results}.
\end{proof}

\begin{remark}[Baseline Direct DRO Model]\label{remark: noise free DRO} \rm
As a baseline for comparison, we consider a \textit{Direct DRO} model, which naively applies a standard data-driven DRO formulation to the observed samples while treating the observed noisy samples~$\widehat{x}_j^\star$ as if they were noise-free.
The direct DRO model is equivalent to our Noisy-Data DRO formulation in the special case where the noise is a Dirac measure. For any measurable set $A \subseteq \mathfrak{X}^\star$, we have~$\mathbb{O}(A \mid x) = \delta_x(A)$. In this hypothetical case, where~$\widehat{x}_j^\star = \widehat{x}_j$, the program from Lemma \ref{lemma: An Equivalent Representation of Noisy-Data DRO} simplifies to:
\begin{align} \label{eq: DRO no noise}
	&\displaystyle\sup_{w \in \mathcal{W}, \lambda \geq 0, a_j, \forall j} - \lambda \varepsilon + \frac{1}{N} \sum_{j=1}^{N} a_j \\
	&{\rm s.t.} \;  \inf_{x \in \mathfrak{X}} \left(
	U( \langle w, x \rangle )  +  \lambda \| x-\widehat{x}_j^\star \| \right) \geq a_j, \quad j = 1, \ldots, N. \notag
\end{align}
To see this, let $x\in \mathfrak{X}$. Given that $ \mathbb{O}(A \mid x) = \delta_x(A)$, it follows that the integral in Lemma~\ref{lemma: An Equivalent Representation of Noisy-Data DRO} collapses to a single norm $\| x - \widehat{x}_j^\star\|$. 
Equation~\eqref{eq: DRO no noise} is consistent with \cite{mohajerin2018data}, where the model does not account for noise. Both ``noisy" and ``noiseless" are treated as if they were noiseless.
\end{remark}

\subsection{Sensitivity of the Optimal Value to the Robustness Radius}
We now analyze the sensitivity of the optimal value of the noisy-data DRO problem with respect to the Wasserstein radius $\varepsilon$. This analysis provides an economic interpretation of the Lagrange multiplier $\lambda$ associated with the size of the ambiguity set.

\begin{definition}[Value Function] \rm
Let the value function $g_{\rm noise}^*(\varepsilon): \mathbb{R}_+ \to \mathbb{R}$ denote the optimal value of the noisy-data DRO problem \eqref{eq: noise DRO} as a function of the Wasserstein radius $\varepsilon$. Let~$(w^*(\varepsilon), \lambda^*(\varepsilon), \{a_j^*(\varepsilon)\})$ denote a corresponding optimal solution to the equivalent problem \eqref{eq: convex DRO noise}.
\end{definition}

The following theorem formally characterizes the relationship between the value function and the optimal Lagrange multiplier.

\begin{theorem}[Sensitivity of the Value Function]\label{theorem: Sensitivity of the Value Function}
Assume that for a given $\varepsilon > 0$, the optimal solution to problem \eqref{eq: convex DRO noise} is unique and that $\lambda^*(\varepsilon)$ is continuous at $\varepsilon$. Then the value function~$g_{\rm noise}^*(\varepsilon)$ is differentiable at $\varepsilon$ and its derivative is given by:
$$
\frac{d g_{\rm noise}^*(\varepsilon)}{d\varepsilon} = -\lambda^*(\varepsilon).
$$
\end{theorem}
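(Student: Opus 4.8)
The plan is to exploit the structural features of the convex reformulation~\eqref{eq: convex DRO noise}: its feasible set in the variables $x := (w,\lambda,\{a_j\}_{j=1}^N)$ is \emph{independent} of $\varepsilon$, and its objective $-\lambda\varepsilon + \tfrac1N\sum_{j} a_j$ is \emph{affine} in $\varepsilon$ for each fixed feasible $x$, with slope $-\lambda \le 0$. Writing $\mathcal F$ for the common feasible set, $h(x) := \tfrac1N\sum_j a_j$, and $\lambda(x)$ for the $\lambda$-coordinate of $x$, we have $g_{\rm noise}^*(\varepsilon) = \sup_{x\in\mathcal F}\bigl(h(x) - \lambda(x)\varepsilon\bigr)$. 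As a pointwise supremum of affine functions of $\varepsilon$, $g_{\rm noise}^*$ is convex (and finite-valued, since optimal solutions are assumed to exist), hence it admits one-sided derivatives $(g_{\rm noise}^*)'_{\pm}(\varepsilon)$ at every $\varepsilon>0$, and it is differentiable at $\varepsilon$ precisely when these coincide. So the goal reduces to showing $(g_{\rm noise}^*)'_{-}(\varepsilon) = (g_{\rm noise}^*)'_{+}(\varepsilon) = -\lambda^*(\varepsilon)$.

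For the right derivative, fix a small $\delta>0$ and let $x_\delta$ be an optimal solution of~\eqref{eq: convex DRO noise} at radius $\varepsilon+\delta$ (optimizers at radii near $\varepsilon$ exist by the same compactness used for Lemma~\ref{lemma: An Equivalent Representation of Noisy-Data DRO}; otherwise one replaces them by $\eta$-optimal solutions and sends $\eta\downarrow0$ at the end). Since $x_\delta\in\mathcal F$ is also feasible at radius $\varepsilon$, the definition of $g_{\rm noise}^*(\varepsilon)$ gives $g_{\rm noise}^*(\varepsilon)\ge h(x_\delta)-\lambda(x_\delta)\varepsilon$, whence $g_{\rm noise}^*(\varepsilon+\delta)-g_{\rm noise}^*(\varepsilon)\le -\lambda(x_\delta)\delta$. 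Conversely, the optimizer $x^*(\varepsilon)$ at radius $\varepsilon$ is feasible at radius $\varepsilon+\delta$, so $g_{\rm noise}^*(\varepsilon+\delta)\ge h(x^*(\varepsilon))-\lambda^*(\varepsilon)(\varepsilon+\delta) = g_{\rm noise}^*(\varepsilon) - \lambda^*(\varepsilon)\delta$. Dividing by $\delta>0$ yields the sandwich $-\lambda^*(\varepsilon)\le \tfrac{1}{\delta}\bigl(g_{\rm noise}^*(\varepsilon+\delta)-g_{\rm noise}^*(\varepsilon)\bigr)\le -\lambda(x_\delta)$. By uniqueness of the optimal solution, $\lambda(x_\delta)=\lambda^*(\varepsilon+\delta)$, and by continuity of $\lambda^*(\cdot)$ at $\varepsilon$ this tends to $\lambda^*(\varepsilon)$ as $\delta\downarrow0$; both outer bounds then converge to $-\lambda^*(\varepsilon)$, giving $(g_{\rm noise}^*)'_{+}(\varepsilon)=-\lambda^*(\varepsilon)$.

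The left derivative is obtained by the mirror-image computation: for $\delta>0$ small enough that $\varepsilon-\delta>0$, taking an optimizer $x_{-\delta}$ at radius $\varepsilon-\delta$ and using feasibility of $x_{-\delta}$ at $\varepsilon$ and of $x^*(\varepsilon)$ at $\varepsilon-\delta$ produces $-\lambda(x_{-\delta})\le \tfrac{1}{\delta}\bigl(g_{\rm noise}^*(\varepsilon)-g_{\rm noise}^*(\varepsilon-\delta)\bigr)\le -\lambda^*(\varepsilon)$, and again uniqueness together with continuity of $\lambda^*$ forces $\lambda(x_{-\delta})=\lambda^*(\varepsilon-\delta)\to\lambda^*(\varepsilon)$, so $(g_{\rm noise}^*)'_{-}(\varepsilon)=-\lambda^*(\varepsilon)$. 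Since the two one-sided derivatives agree, $g_{\rm noise}^*$ is differentiable at $\varepsilon$ with $\tfrac{d}{d\varepsilon}g_{\rm noise}^*(\varepsilon)=-\lambda^*(\varepsilon)$, as claimed. The only genuinely delicate step is passing to the limit $\lambda(x_\delta)\to\lambda^*(\varepsilon)$: this is exactly where the hypotheses enter, since uniqueness of the optimal solution makes the slope $-\lambda(x_\delta)$ of the active affine piece unambiguous near $\varepsilon$, and continuity of $\lambda^*$ is what prevents a jump of that slope — i.e., a kink of the value function — at $\varepsilon$; the existence-of-optimizers issue is purely technical and is sidestepped by the $\eta$-optimal device.
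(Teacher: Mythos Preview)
Your proof is correct and takes a genuinely different route from the paper's. The paper decomposes the value function as $g_{\rm noise}^*(\varepsilon)=\sup_{w\in\mathcal{W}} g_w^*(\varepsilon)$, writes each $g_w^*(\varepsilon)=\sup_{\lambda\ge 0}\{-\lambda\varepsilon+D_w(\lambda)\}$ via strong duality, argues differentiability of the inner value function from concavity of $D_w$ and uniqueness of the maximizer, and then invokes the Envelope Theorem for the outer supremum over $w$. You instead treat problem~\eqref{eq: convex DRO noise} as a \emph{single} parametric program: the feasible set $\mathcal{F}$ in $(w,\lambda,\{a_j\})$ does not depend on $\varepsilon$, and the objective is affine in $\varepsilon$ with slope $-\lambda$, so $g_{\rm noise}^*$ is a pointwise supremum of affine functions and you compute its one-sided derivatives directly by a sandwich argument using optimizers at $\varepsilon$ and $\varepsilon\pm\delta$. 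Your approach is more elementary---no Envelope Theorem is needed---and it makes transparent exactly where each hypothesis enters: uniqueness pins down the slope of the active affine piece, and continuity of $\lambda^*$ rules out a kink. The paper's two-level decomposition, by contrast, yields the intermediate fact that $g_w^*(\varepsilon)$ is convex and differentiable for each fixed $w$, which may be of independent interest, and situates the result within the standard Lagrangian-duality/envelope framework. One minor point: your identification $\lambda(x_\delta)=\lambda^*(\varepsilon+\delta)$ tacitly assumes $\lambda^*$ is single-valued in a neighborhood of $\varepsilon$; this is already implicit in the theorem's hypothesis that $\lambda^*(\cdot)$ is continuous at $\varepsilon$, and the paper's proof relies on the same implicit assumption, so this is not a gap relative to the stated hypotheses.
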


\begin{proof}
See Appendix~\ref{appendix: proof in Section: main results}.
\end{proof}

\begin{remark} \rm
Theorem \ref{theorem: Sensitivity of the Value Function} established that the optimal Lagrange multiplier, $\lambda^*(\varepsilon)$,  quantifies the marginal decrease in the worst-case utility corresponding to an incremental increase in the ambiguity radius $\varepsilon$.  Therefore, $\lambda^*(\varepsilon)$ can be interpreted as the ``shadow price of robustness."	That is, for ensuring robustness per unit, in terms of $\varepsilon$, we must ``pay" a price of $\lambda^*$ in the form of the reduced guaranteed utility.
\end{remark}

\subsection{Comparison of Optimal Values across DRO Models}
To investigate how incorporating noisy distributions affects the resulting optimization outcomes, we compare the optimal values of two models: $(i)$ the Direct DRO problem and $(ii)$ the Noisy-Data DRO problem. 
For brevity in referring to the models, and for $\varepsilon >0$, Table~\ref{table: model notation} summarizes the shorthand notations for problems and the associated optimal value notations in each case.

\begin{table}[htbp]
\scriptsize
\centering
\caption{Shorthand Notations for the Two DRO Models}
\label{table: model notation}
\begin{tabular}{l l l l}
	\toprule
	DRO Model  & Shorthand & Optimal Value \\ 
	\midrule
	Direct DRO problem (Remark~\ref{remark: noise free DRO}) & $\texttt{DRO}$ & $g^*(\varepsilon)$\\ 
	Noisy-Data DRO problem (Lemma~\ref{lemma: An Equivalent Representation of Noisy-Data DRO}) & $\texttt{DRO}_{\rm {noise}}$ & $g_{\rm noise}^*(\varepsilon)$ \\ 
	\bottomrule
\end{tabular}
\end{table}

For the subsequent dominance result, we assume the noise is mean-preserving. That is, the conditional expectation of a noisy observation given the true value is the true value itself, i.e.,~$\mathbb{E}^{\mathbb{O}(\cdot \mid x)} [X^\star \mid X=x] = x$ for all~$x \in \mathfrak{X}$.

\begin{theorem}[Optimal Values Dominance]\label{theorem: opt values comparison}
For $\varepsilon>0$, the optimal values of the two DRO models described in Table~\ref{table: model notation} satisfy 
$g^*_{\rm noise}(\varepsilon) \geq g^*(\varepsilon)$.
\end{theorem}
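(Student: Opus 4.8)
The plan is to compare the two models through their convex duals rather than through their ambiguity sets directly. By Lemma~\ref{lemma: An Equivalent Representation of Noisy-Data DRO}, after eliminating the epigraph variables $a_j$, $g^*_{\rm noise}(\varepsilon)$ is the supremum over $w\in\mathcal{W}$ and $\lambda\geq 0$ of
$$
\Phi_{\rm noise}(w,\lambda) := -\lambda\varepsilon + \frac1N\sum_{j=1}^N \inf_{x\in\mathfrak{X}}\Bigl( U(\langle w,x\rangle) + \lambda\!\int_{\mathfrak{X}^\star}\!\|x^\star - \widehat{x}_j^\star\|\,d\mathbb{O}(x^\star\mid x)\Bigr),
$$
while by Remark~\ref{remark: noise free DRO} the value $g^*(\varepsilon)$ is the supremum over the same variables of $\Phi(w,\lambda)$, obtained from $\Phi_{\rm noise}$ by replacing the inner integral with the single norm $\|x - \widehat{x}_j^\star\|$. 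Since the decision variables and the $-\lambda\varepsilon$ term coincide in both, it suffices to show $\Phi_{\rm noise}(w,\lambda)\geq\Phi(w,\lambda)$ for every $w$ and every $\lambda\geq 0$, which in turn reduces to the pointwise claim that for each $j$ and each $x\in\mathfrak{X}$,
$$
\int_{\mathfrak{X}^\star}\|x^\star - \widehat{x}_j^\star\|\,d\mathbb{O}(x^\star\mid x) \;\geq\; \|x - \widehat{x}_j^\star\|.
$$

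The engine of the argument is Jensen's inequality applied to the convex function $z\mapsto\|z - \widehat{x}_j^\star\|$ against the conditional law $\mathbb{O}(\cdot\mid x)$. Since $\mathfrak{X}^\star$ is compact, the conditional mean exists and Jensen gives $\int\|x^\star - \widehat{x}_j^\star\|\,d\mathbb{O}(x^\star\mid x) \geq \|\mathbb{E}^{\mathbb{O}(\cdot\mid x)}[X^\star] - \widehat{x}_j^\star\|$; the mean-preserving hypothesis $\mathbb{E}^{\mathbb{O}(\cdot\mid x)}[X^\star\mid X=x]=x$ then collapses the right side to $\|x - \widehat{x}_j^\star\|$. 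Multiplying by $\lambda\geq 0$ preserves the inequality, and adding $U(\langle w,x\rangle)$ shows the integrand inside the $j$-th infimum of $\Phi_{\rm noise}$ dominates that of $\Phi$ for every $x\in\mathfrak{X}$; taking $\inf_{x\in\mathfrak{X}}$ (attained, since $\mathfrak{X}$ is compact, $U$ continuous, and $x\mapsto\int\|x^\star-\widehat{x}_j^\star\|\,d\mathbb{O}(x^\star\mid x)$ is $1$-Lipschitz by the footnote to Section~\ref{section: main results}) preserves it. Averaging over $j$ and subtracting $\lambda\varepsilon$ yields $\Phi_{\rm noise}(w,\lambda)\geq\Phi(w,\lambda)$; taking the supremum over $\lambda\geq 0$ and then over $w\in\mathcal{W}$ delivers $g^*_{\rm noise}(\varepsilon)\geq g^*(\varepsilon)$.

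I expect no deep obstacle: the content is the one-line observation that convolving with mean-preserving noise can only enlarge the expected transport cost to each data atom, which is precisely why the noisy constraint in \eqref{eq: convex DRO noise} is slacker than in \eqref{eq: DRO no noise} and the feasible region of the dual grows. The only points needing care are (i) confirming the inner integrals are finite and the infima attained, handled by the compactness/continuity facts above together with $\mathbb{E}\|E\|<\infty$; and (ii) noting that a naive \emph{primal} comparison via a set inclusion $\mathcal{B}_{\varepsilon,\mathbb{O}}(\widehat{\mathbb{F}}^\star)\subseteq\mathcal{B}_\varepsilon(\widehat{\mathbb{F}}^\star)$ is false in general, because convolution can pull a distribution \emph{toward} $\widehat{\mathbb{F}}^\star$, so the dual route is not merely convenient but necessary. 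In the degenerate case where $g^*(\varepsilon)=+\infty$ (e.g.\ when some $\widehat{x}_j^\star$ lies far from $\mathfrak{X}$ relative to $\varepsilon$), the same pointwise domination forces $g^*_{\rm noise}(\varepsilon)=+\infty$ as well, so the stated inequality still holds.
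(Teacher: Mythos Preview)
Your proposal is correct and follows essentially the same route as the paper: both arguments compare the dual reformulations from Lemma~\ref{lemma: An Equivalent Representation of Noisy-Data DRO} and Remark~\ref{remark: noise free DRO}, invoke Jensen's inequality for the convex map $z\mapsto\|z-\widehat{x}_j^\star\|$ together with the mean-preserving hypothesis to obtain $\int\|x^\star-\widehat{x}_j^\star\|\,d\mathbb{O}(x^\star\mid x)\geq\|x-\widehat{x}_j^\star\|$, and then propagate this pointwise domination through the infimum and supremum. The only cosmetic difference is that the paper phrases the comparison as a feasible-set inclusion for the triplets $(w,\lambda,\{a_j\})$, whereas you first eliminate the epigraph variables and compare the resulting functions $\Phi_{\rm noise}$ and $\Phi$ directly; your additional remarks on attainment and on why a primal set-inclusion argument fails are sound but not needed for the proof.
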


\begin{proof}
See Appendix~\ref{appendix: proof in Section: main results}.
\end{proof}

\begin{remark} \rm
For the theorem, it is important to note that the optimal Lagrange multipliers $\lambda^{*}$ (direct) and $\lambda^{*}_{\mathrm{noise}}$ (noisy) \emph{need not coincide}.   
The theorem relies only on pointwise feasibility dominance for every fixed $(w, \lambda)$, not on equality of optimal dual solutions.
\end{remark}

The dominance result of Theorem~\ref{theorem: opt values comparison} rests on the assumption of mean-preserving noise. We now quantify how this result is affected if the noise exhibits a bounded bias.

\begin{proposition}[Robustness Bound under Biased Noise] \label{proposition: Robustness Bound under Biased Noise}
For $\varepsilon >0$. Let the noise model be subject to a bias $b(x)$ such that $\mathbb{E}^{\mathbb{O}(\cdot \mid x)}[X^\star] = x + b(x)$ with $\|b(x)\| \leq \delta$ for some $\delta \ge 0$ and all $x \in \mathfrak{X}$. Let $g_{\text{noise}, \delta}^* (\varepsilon)$ denote the optimal value of the noisy-data DRO problem under this biased noise. 
Let $g^*(\varepsilon)$ be the optimal value of the corresponding direct DRO problem. Then the following bound holds:
$$
g_{\text{noise}, \delta}^* (\varepsilon) \geq g^*(\varepsilon) - \lambda^*\delta,
$$
where $\lambda^*$ is the optimal Lagrange multiplier from the direct DRO problem.
\end{proposition}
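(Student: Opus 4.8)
\textit{Proof proposal.}
The plan is to run the same feasibility-transfer argument that underlies Theorem~\ref{theorem: opt values comparison}, but now carrying along the slack produced by the bias. Work throughout with the convex reformulations: by Lemma~\ref{lemma: An Equivalent Representation of Noisy-Data DRO} the biased noisy-data DRO value $g_{\text{noise},\delta}^*(\varepsilon)$ equals the supremum of $-\lambda\varepsilon+\tfrac1N\sum_{j=1}^N a_j$ over $(w,\lambda\geq0,\{a_j\})$ subject to $\inf_{x\in\mathfrak{X}}\bigl(U(\langle w,x\rangle)+\lambda\int_{x^\star\in\mathfrak{X}^\star}\|x^\star-\widehat{x}_j^\star\|\,d\mathbb{O}(x^\star\mid x)\bigr)\geq a_j$ for every $j$, while by Remark~\ref{remark: noise free DRO} the direct DRO value $g^*(\varepsilon)$ is the analogous supremum with the transport term replaced by the single norm $\|x-\widehat{x}_j^\star\|$, as in~\eqref{eq: DRO no noise}.

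The key inequality is a Jensen-plus-triangle estimate, uniform in $x$. For each fixed $x\in\mathfrak{X}$, the map $x^\star\mapsto\|x^\star-\widehat{x}_j^\star\|$ is convex, so Jensen's inequality gives $\int_{x^\star\in\mathfrak{X}^\star}\|x^\star-\widehat{x}_j^\star\|\,d\mathbb{O}(x^\star\mid x)\geq\bigl\|\mathbb{E}^{\mathbb{O}(\cdot\mid x)}[X^\star]-\widehat{x}_j^\star\bigr\|=\|x+b(x)-\widehat{x}_j^\star\|$; combining this with the triangle inequality and $\|b(x)\|\leq\delta$ yields $\int_{x^\star\in\mathfrak{X}^\star}\|x^\star-\widehat{x}_j^\star\|\,d\mathbb{O}(x^\star\mid x)\geq\|x-\widehat{x}_j^\star\|-\delta$ for all $x\in\mathfrak{X}$ and all $j$.

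Now transfer feasibility. Since~\eqref{eq: DRO no noise} is a convex program over the compact set $\mathcal{W}$ under the continuity assumptions in force in Section~\ref{section: main results}, it admits an optimizer $(w^*,\lambda^*,\{a_j^*\})$ with $\lambda^*\geq0$ and $g^*(\varepsilon)=-\lambda^*\varepsilon+\tfrac1N\sum_j a_j^*$; feasibility means $U(\langle w^*,x\rangle)+\lambda^*\|x-\widehat{x}_j^\star\|\geq a_j^*$ for all $x\in\mathfrak{X}$. Multiplying the key inequality by $\lambda^*\geq0$, adding $U(\langle w^*,x\rangle)$, and taking the infimum over $x\in\mathfrak{X}$ (the constant $-\lambda^*\delta$ factors out of the infimum) gives $\inf_{x\in\mathfrak{X}}\bigl(U(\langle w^*,x\rangle)+\lambda^*\int_{x^\star\in\mathfrak{X}^\star}\|x^\star-\widehat{x}_j^\star\|\,d\mathbb{O}(x^\star\mid x)\bigr)\geq a_j^*-\lambda^*\delta$ for each $j$. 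Hence $(w^*,\lambda^*,\{a_j^*-\lambda^*\delta\})$ is feasible for the biased noisy-data DRO program, with objective $-\lambda^*\varepsilon+\tfrac1N\sum_j(a_j^*-\lambda^*\delta)=g^*(\varepsilon)-\lambda^*\delta$. Taking the supremum over feasible points gives $g_{\text{noise},\delta}^*(\varepsilon)\geq g^*(\varepsilon)-\lambda^*\delta$.

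\textit{Main obstacle.} There is no deep difficulty here; the proof is a bookkeeping refinement of Theorem~\ref{theorem: opt values comparison}. The points requiring care are: the sign condition $\lambda^*\geq0$, without which the Jensen/triangle bound would not survive multiplication by $\lambda^*$; pulling the constant $-\lambda^*\delta$ out of the infimum over $x$; and ensuring the direct-DRO optimizer (hence $\lambda^*$) exists so the stated bound is well defined. It is also worth noting that the bound collapses to Theorem~\ref{theorem: opt values comparison} when $\delta=0$.
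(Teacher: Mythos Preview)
Your proposal is correct and matches the paper's own proof essentially step for step: Jensen's inequality followed by the (reverse) triangle inequality to obtain $\int\|x^\star-\widehat{x}_j^\star\|\,d\mathbb{O}(x^\star\mid x)\geq\|x-\widehat{x}_j^\star\|-\delta$, then transferring the direct-DRO optimizer $(w^*,\lambda^*,\{a_j^*\})$ to the biased noisy program via the shifted epigraph variables $a_j^*-\lambda^*\delta$. The only cosmetic difference is that the paper explicitly writes out the feasibility check for $(w^*,\lambda^*,\{a_j^*-\lambda^*\delta\})$ rather than phrasing it as ``multiply, add $U$, take infimum,'' but the content is identical.
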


\begin{proof}
See Appendix~\ref{appendix: proof in Section: main results}.
\end{proof}

\begin{remark} \rm
Proposition~\ref{proposition: Robustness Bound under Biased Noise} quantifies the impact of bias. The dominance of the noisy-data DRO model over the direct one, $g_{\text{noise}, \delta}^* (\varepsilon) \ge g^*(\varepsilon)$, is guaranteed only if the ``benefit" from the noise's variance outweighs the penalty from its bias, $\lambda^*\delta$. If the bias $\delta$ is sufficiently large, the noisy-data DRO model can indeed become more conservative than its direct counterpart.
\end{remark}

\subsection{Price of Ambiguity}\label{section: Price of Ambiguity}
Motivated by~\cite{bertsimas2004price,bertsimas2011price}, we introduce a new metric, the \textit{price of ambiguity}, to quantify the loss in optimal value due to distributional ambiguity.
In particular, we define $\text{SYSTEM}$ as the optimal value of the sample average approximation (SAA) under the empirical distribution $\widehat{\mathbb{F}}^\star$. This corresponds to the direct DRO model with $\varepsilon=0$, where the observed data is used without accounting for ambiguity:
$$
\text{SYSTEM} := \max_{w \in \mathcal{W}} \mathbb{E}^{\widehat{\mathbb{F}}^\star} [U(\langle w, X^\star\rangle)] = \max_{w \in \mathcal{W}} \frac{1}{N} \sum_{j=1}^N U(\langle w, \widehat{x}_j^\star \rangle).
$$
For $\varepsilon>0$, let $g^*(\varepsilon)$ and $g_{\rm noise}^*(\varepsilon)$ be the optimal values of the direct and noisy-data DRO problems, respectively.
The corresponding price of ambiguity, denoted by $\mathtt{POA}$, measures the relative reduction in the optimal value caused by introducing ambiguity of radius $\varepsilon >0$:
\begin{align*}
\mathtt{POA}(\varepsilon) = \frac{\text{SYSTEM} - g^*(\varepsilon)}{\text{SYSTEM}}
\quad \text{ and } \quad 
\mathtt{POA}_{\rm noise}(\varepsilon)  =  \frac{\text{SYSTEM} - g_{\rm noise}^*(\varepsilon)}{\text{SYSTEM}}.
\end{align*}
The direct DRO model reduces to SAA when $\varepsilon = 0$, since its ambiguity set reduces to a singleton containing only $\widehat{\mathbb{F}}^\star$.
However, the noisy-data DRO behaves differently. Specifically, with zero radius, the ambiguity set~\eqref{eq: noisy Wasserstein ball} used in noisy-data DRO is generally not a singleton. 
To see this, fix $\varepsilon  =0$. We observe that the noisy ambiguity set is given by
\begin{align}
\mathcal{B}_{\varepsilon=0, \mathbb{O}}(\widehat{\mathbb{F}}^\star) 
&= \{\mathbb{F} \in \mathcal{M}(\mathfrak{X}): d(\mathbb{F}^\star, \widehat{\mathbb{F}}^\star) \leq 0\} \notag \\
&= \{\mathbb{F} \in \mathcal{M}(\mathfrak{X}): \mathbb{F}^\star = \widehat{\mathbb{F}}^\star \} \notag \\
&=\left\{ \mathbb{F} \in \mathcal{M}(\mathfrak{X}): \int_{x \in \mathfrak{X}} \mathbb{O}(A \mid x) \, d\mathbb{F}(x) = \widehat{\mathbb{F}}^\star(A) \text{ for all measurable } A \subseteq \mathfrak{X}^\star \right\}. \notag 
\end{align}

The following lemma establishes fundamental properties of the price of ambiguity: its monotonicity with respect to the ambiguity radius $\varepsilon$, and the dominance of the direct DRO model's $\mathtt{POA}$ over that of the noisy-data DRO model.

\begin{lemma}[Dominance of the Price of Ambiguity] \label{lemma: Dominance of the Price of Ambiguity}
If SYSTEM $>0$, then the following statements hold:

$(i)$ the price of ambiguity $\mathtt{POA}(\varepsilon)$ and $\mathtt{POA}_{\text{noise}}(\varepsilon)$ are nondecreasing in~$\varepsilon$.

$(ii)$    For $\varepsilon >0$, we have
$   
\mathtt{POA}(\varepsilon) \geq \mathtt{POA}_{\rm noise}(\varepsilon).
$
\end{lemma}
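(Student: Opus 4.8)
The plan is to reduce both parts to elementary monotonicity facts about the value functions $g^*(\varepsilon)$ and $g^*_{\rm noise}(\varepsilon)$, together with the dominance already proved in Theorem~\ref{theorem: opt values comparison}. The key structural observation is that $\mathtt{POA}(\varepsilon) = 1 - g^*(\varepsilon)/\text{SYSTEM}$ and $\mathtt{POA}_{\rm noise}(\varepsilon) = 1 - g^*_{\rm noise}(\varepsilon)/\text{SYSTEM}$, where $\text{SYSTEM}>0$ is a constant that does not depend on $\varepsilon$. Hence the behavior of $\mathtt{POA}$ and $\mathtt{POA}_{\rm noise}$ in $\varepsilon$ is governed entirely by that of $g^*$ and $g^*_{\rm noise}$ (with the direction reversed, since we subtract from $1$ and divide by a positive constant), and the ordering of the two prices of ambiguity is governed entirely by the ordering of the two optimal values.

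For part $(i)$ I would first establish that $g^*(\varepsilon)$ and $g^*_{\rm noise}(\varepsilon)$ are nonincreasing in $\varepsilon$, via a nesting argument on the ambiguity sets. For the direct DRO, $\varepsilon_1 \le \varepsilon_2$ implies $\mathcal{B}^\star_{\varepsilon_1}(\widehat{\mathbb{F}}^\star) \subseteq \mathcal{B}^\star_{\varepsilon_2}(\widehat{\mathbb{F}}^\star)$, so for each fixed $w$ the inner infimum over the larger ball is no larger than over the smaller one; taking the supremum over $w \in \mathcal{W}$ preserves this inequality, yielding $g^*(\varepsilon_2) \le g^*(\varepsilon_1)$. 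For the noisy-data DRO, I would use the identity $\mathcal{B}_{\varepsilon,\mathbb{O}}(\widehat{\mathbb{F}}^\star) = T_\mathbb{O}^{-1}(\mathcal{B}^\star_\varepsilon(\widehat{\mathbb{F}}^\star))$ from~\eqref{eq: noisy Wasserstein ball} together with the fact that preimages preserve inclusions, giving $\mathcal{B}_{\varepsilon_1,\mathbb{O}}(\widehat{\mathbb{F}}^\star) \subseteq \mathcal{B}_{\varepsilon_2,\mathbb{O}}(\widehat{\mathbb{F}}^\star)$; the same sup--inf monotonicity then gives $g^*_{\rm noise}(\varepsilon_2) \le g^*_{\rm noise}(\varepsilon_1)$. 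Dividing by $\text{SYSTEM}>0$ and subtracting from $1$ flips the direction, so both $\mathtt{POA}$ and $\mathtt{POA}_{\rm noise}$ are nondecreasing. (An alternative route to monotonicity of $g^*_{\rm noise}$ is Theorem~\ref{theorem: Sensitivity of the Value Function}, whose derivative $-\lambda^*(\varepsilon) \le 0$ witnesses the decrease wherever its hypotheses hold; but the nesting argument is cleaner as it needs no differentiability or uniqueness assumption.)

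For part $(ii)$ the claim follows directly from Theorem~\ref{theorem: opt values comparison}: for every $\varepsilon>0$ we have $g^*_{\rm noise}(\varepsilon) \ge g^*(\varepsilon)$, hence $\text{SYSTEM} - g^*_{\rm noise}(\varepsilon) \le \text{SYSTEM} - g^*(\varepsilon)$, and dividing by $\text{SYSTEM}>0$ gives $\mathtt{POA}_{\rm noise}(\varepsilon) \le \mathtt{POA}(\varepsilon)$.

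I do not expect a substantive obstacle here; the proof is essentially a bookkeeping exercise. The two points deserving a little care are: $(a)$ applying the sup--inf monotonicity in the correct direction --- enlarging the feasible set of an infimum weakly decreases its value, while the outer supremum is monotone in its objective --- and $(b)$ confirming that the mean-preserving noise assumption under which Theorem~\ref{theorem: opt values comparison} is stated remains in force in this subsection, so that part $(ii)$ may invoke it without qualification. If a self-contained exposition is preferred, step $(i)$ can be isolated as a one-line lemma asserting that the optimal value of a max--min DRO problem is nonincreasing under enlargement of the ambiguity set, and then applied twice, once to each model.
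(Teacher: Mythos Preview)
Your proof is correct. For part~$(ii)$ you follow exactly the paper's route: invoke Theorem~\ref{theorem: opt values comparison} and divide by $\text{SYSTEM}>0$. For part~$(i)$, however, you take a genuinely different approach. The paper proves monotonicity by differentiation: it invokes Theorem~\ref{theorem: Sensitivity of the Value Function} to obtain $d g^*_{\rm noise}/d\varepsilon = -\lambda^*(\varepsilon) \le 0$, then differentiates $\mathtt{POA}_{\rm noise}(\varepsilon)$ directly to get $\lambda^*(\varepsilon)/\text{SYSTEM}\ge 0$, and argues analogously for the direct model. Your nesting argument---that $\varepsilon_1 \le \varepsilon_2$ forces $\mathcal{B}^\star_{\varepsilon_1}(\widehat{\mathbb{F}}^\star) \subseteq \mathcal{B}^\star_{\varepsilon_2}(\widehat{\mathbb{F}}^\star)$ and, via preimages, $\mathcal{B}_{\varepsilon_1,\mathbb{O}}(\widehat{\mathbb{F}}^\star) \subseteq \mathcal{B}_{\varepsilon_2,\mathbb{O}}(\widehat{\mathbb{F}}^\star)$, so the inner infimum weakly decreases and the outer supremum preserves the inequality---is more elementary and strictly more general: it needs neither the uniqueness of the optimizer nor the continuity of $\lambda^*(\varepsilon)$ that Theorem~\ref{theorem: Sensitivity of the Value Function} assumes. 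The paper's route has the compensating advantage of identifying the \emph{rate} of increase of $\mathtt{POA}_{\rm noise}$ as $\lambda^*(\varepsilon)/\text{SYSTEM}$, linking the price of ambiguity back to the shadow price of robustness; but for the bare monotonicity statement your argument is cleaner. Your caveat~$(b)$ about the mean-preserving hypothesis is also on point: the paper imposes it just before Theorem~\ref{theorem: opt values comparison} and it is carried implicitly through Section~\ref{section: Price of Ambiguity}.
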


\begin{proof}
See Appendix~\ref{appendix: proof in Section: main results}.
\end{proof}

\begin{remark}[The Price of Ambiguity as a ``Blessing in Disguise"] \rm
Part~$(ii)$ in Lemma~\ref{lemma: Dominance of the Price of Ambiguity} highlights another sense in which noisy data can be a ``blessing in disguise." It shows that the noisy-data DRO model is not only less conservative (yielding a higher optimal value $g_{\rm noise}^\star$ per Theorem~\ref{theorem: opt values comparison}) but is also less costly in terms of its price of ambiguity. In essence, the relative penalty for a given level of robustness $\varepsilon$ is smaller when the noise structure is explicitly modeled. 
\end{remark}

\section{Illustrative Examples} \label{section: Illustrative Examples}
To substantiate our theoretical findings, this section presents an example in the domain of fair resource allocation in communication networks. Fairness in allocation problems has been studied across various fields, including social sciences, welfare economics, communications, and engineering; see~\cite{bertsimas2011price}. 
While our theory is developed in a continuous setting, we discretize the result for numerical implementation.
In this section, we demonstrate that $(i)$ incorporating noisy data within our DRO framework can lead to superior optimal values, as a direct consequence of the theoretically established enlargement of the feasible set; $(ii)$ these superior values, in turn, imply that noisy-data DRO attain a lower price of ambiguity; and $(iii)$ fairness and ambiguity are conceptually linked, as increasing either the fairness or ambiguity parameter ultimately helps achieve more equitable~allocation.

\subsection{Distributionally Robust Fair Resource Allocation Problem}
We consider the problem faced by a central decision maker who aims to allocate a set of scarce resources \emph{fairly} among $n$ self-interested entities.
Let $w$ be the allocation strategy, where $w_i \geq 0$ represents the fraction of resources allocated to entity $i$, and the entire resource is fully allocated, i.e., $\sum_{i=1}^n w_i = 1$. 
Let $X_i$ be a random variable characterizing the effectiveness or state of entity~$i$.
To formalize the trade-off between efficiency and fairness in allocation, we adopt the $\alpha$-fairness framework, as defined in~\cite{altman2008generalized}, as follows.

\begin{definition}[$\alpha$-Fairness Utility Function]\rm \label{definition: modified alpha fairness}
Given $\alpha \geq 0$, an allocation $w$ is said to be \emph{degree of $\alpha$-fair} if it maximizes the aggregate utility 
$$
\mathbf{U}(\alpha, w, x) := \sum_{i=1}^n U_i  (\alpha, w_i, x_i),
$$
where the utility function of entity $i$ is defined as
\begin{align*}
	U_i(\alpha, w_i, x_i) :=
	\begin{cases}
		\frac{1}{1 - \alpha} \left( \left( u_i( w_i,x_i ) \right)^{1 - \alpha} - 1 \right) , & \alpha \in [0, \infty ), \ \alpha \neq 1; \\
		\log (u_i( w_i,x_i )), & \alpha = 1,
	\end{cases}
\end{align*} 
where $u_i (\cdot)>0$ is the utility of entity $i$. This form, see \cite{altman2008generalized}, ensures continuity in~$\alpha$ at $\alpha = 1$.
\end{definition}

\begin{remark}\rm
The parameter $\alpha \geq 0$ controls the balance between fairness and efficiency. Specifically, when~$\alpha = 0$, the objective reduces to maximizing the aggregate utility $u_i(\cdot)$, corresponding to the most \emph{efficient} strategy under the utilitarian criterion. 
When $\alpha = 1$, it corresponds to proportional fairness.  As~$\alpha \rightarrow \infty$, the objective prioritizes fairness and approaches the fairest allocation--the max-min criterion. See~\cite{xinying2023guide} for a comprehensive survey of this~topic.
\end{remark}

\textit{Distributionally Robust Fair Resource Allocation Problem.}
We now extend the $\alpha$-fair resource allocation in wireless communication networks to the DRO setting to account for uncertainty and noise in the distribution:
$$
\max_{w \in \mathcal{W}} \, 
\inf_{\mathbb{F} \in \mathcal{B}_{ \varepsilon}(\widehat{\mathbb{F}}^{\star})} \mathbb{E}^{\mathbb{F}}  \left[  \mathbf{U}(\alpha, w, x) \right],
$$
where the utility depends on the signal-to-noise ratio (SNR) resulting from power assignments.
Let~$w_i \geq 0$ be the proportion of total transmission power allocated to user $i$, with the feasible set~$\mathcal{W}:= \{w \in \mathbb{R}^n: w_i \geq 0, \sum_{i=1}^n w_i = 1\}$.
Let $X_i$ be the ratio of channel gain to noise power for user $i$, capturing the randomness in the communication environment due to channel fading and background noise. The utility of $i$-th entity is defined as $ u_i( w_i, x_i):= x_i w_i + 1$, which represents the shifted SNR under the $\alpha$-fairness framework.
When $\alpha = 0$, the objective function reduces to the SNR maximization.
When $\alpha = 1$, the objective function corresponds to throughput maximization.

\subsection{Numerical Illustrations}

\textit{Dataset.}
We utilize the Social-Aware Resource Allocation (SARA) dataset, as described in \cite{ziya2024sara}, in a Device-to-Device (D2D) communication network. 
The dataset contains various information such as device and user attributes, including device IDs, user types (Regular, VIP, High-Demand), social influence scores, and the identifier of the base station (one of four) associated with each user, as well as network and channel parameters, including channel gain, noise power, and SNR.
For each base station, we use the channel gain divided by noise power\footnote{To form the $\alpha$-fair assignment of SNR, both values of channel and noise power are converted to linear scale: for example, if $x_{\rm dB}$ is in decibels, then $x = 10^{x_{\rm dB} / 10}$.} as observed data points~$\widehat{x}_j^{\star}$, where~$j = 1, \dots, N$ and $N$ ranges from 70 to 90 depending on the base station.
We normalize the observed data using min-max normalization, rescaling it into a feature range from 0 to~1. The rescaled value is defined as~$x_{\rm scaled} = \frac{x - x_{\min}}{x_{\max} - x_{\min}}$.
The ambiguity radius is set to~$\varepsilon \in [0.01, 0.1]$, and the fairness parameter is chosen from~$\alpha \in [0, 20]$ along with the additional value~$\alpha = 100$, which serves as an approximation of max-min fairness.
The noise distribution $\mathbb{O}$ is uniform (U), i.e., $\mathbb{P}(E = e) = \frac{1}{|\mathfrak{E}|}$ for all $e \in \mathfrak{E}$, where $|\mathfrak{E}|$ denotes the cardinality of the set $\mathfrak{E}$. 
The noise set is~$\mathfrak{E} = \{ e \in \mathbb{R}^n : a \leq e_i \leq b, \ i = 1, \ldots, n\} $, where $-1 < a  < b < \infty$. 
In this example, we set~$a = -0.01$ and $b = 0.01$.

The goal is to determine the power allocation proportion $w$ across three user types: Regular, VIP, and High-Demand, using the two models: $\texttt{DRO}_{\rm {noise}}$ and $\texttt{DRO}$ to examine the effects of fairness, ambiguity, and noise.
For the mathematical formulation, see Appendix~\ref{sec: Fairness Model Formulations}.
The $\texttt{DRO}_{\rm{noise}}$ model applies Lemma~\ref{lemma: An Equivalent Representation of Noisy-Data DRO}, assuming that the data is perturbed by uniform noise.
In contrast, the $\texttt{DRO}$ model applies Remark~\ref{remark: noise free DRO}, treating the data as if it were noiseless.
We are supplied with data samples $\widehat{x}^\star_j$. The procedure for generating the data in support is as follows: first, define the support~$\mathfrak{X}$. Since the data are rescaled to lie in $[0,1]$, we discretize $\mathfrak{X}$ into $\{0, 0.25, 0.5, 0.75, 1\}$. Next, we use the kernel $\mathbb{O}$ to generate multiple noise samples $e$, and for each, obtain $x^\star = x + e$.
Our results demonstrate that increasing either fairness or robustness in the model setup leads to optimal solutions that allocate resources among three user types in a manner that resembles an equitable distribution. Moreover, incorporating noise results in a higher aggregate utility.

\subsubsection{Fairness and Ambiguity.}

Figures~\ref{figure: SARA epsilon}--\ref{figure: SARA alpha} show the optimal allocation strategies of base station 1 under the two models:
$\texttt{DRO}$ and $\texttt{DRO}_{\rm {noise}}$,
as we vary the radius of the ambiguity set $\varepsilon$ and the fairness parameter $\alpha$, respectively.
In Figures~\ref{figure: SARA epsilon}, we fix the fairness parameter $\alpha = 0$ and vary the ambiguity radius $\varepsilon \in [ 0.01, 0.1]$ to examine the effect of ambiguity.
In Figure~\ref{figure: SARA alpha}, we fix the ambiguity radius $\varepsilon$ at the smallest value used in Figures~\ref{figure: SARA epsilon}, i.e., $\varepsilon = 0.01$, to observe the effect of the fairness parameter $\alpha \in [0, 20] \cup \{100\}$.
Thus, both figures start from the same parameter setting: $\varepsilon = 0.01$ and $\alpha = 0$ (the leftmost points in the figures).
We observe a compelling parallel: increasing either the ambiguity radius $\varepsilon$, see Figure~\ref{figure: SARA epsilon}, or the fairness parameter $\alpha$, see Figure~\ref{figure: SARA alpha}, pushes the optimal allocation away from an efficiency-maximization solution (which heavily favors High-Demand users) towards a more equitable distribution among the three user types.  This suggests that while ambiguity and fairness are distinct concepts, increasing the ambiguity parameter $\varepsilon$ can be an effective mechanism for inducing solutions that are structurally more equal.
It is essential to recognize that equality and fairness are distinct concepts; in certain contexts, achieving fairness may necessitate reducing inequality, as noted in~\cite {xinying2023guide}.

\begin{figure}[htbp]
	\centering
	\includegraphics[width=.9\linewidth]{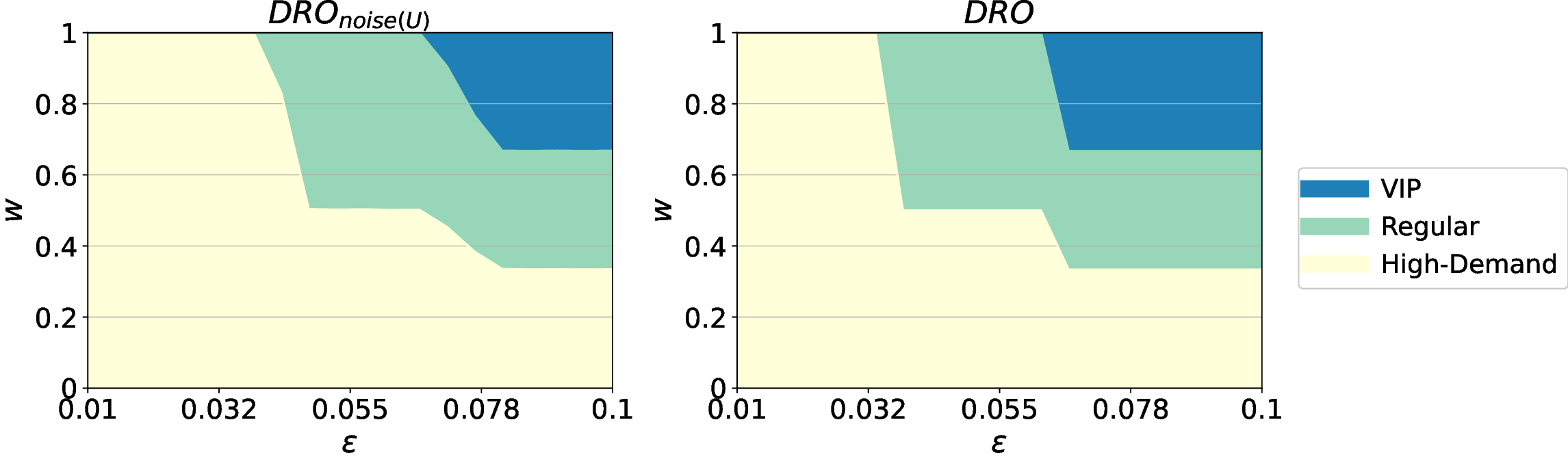}
	\caption{Optimal Allocation Strategies of Base Station 1 Under Varying $\varepsilon$. $(\alpha =0)$ }
	\label{figure: SARA epsilon}
\end{figure}

\begin{figure}[htbp]
	\centering
	\includegraphics[width=.9\linewidth]{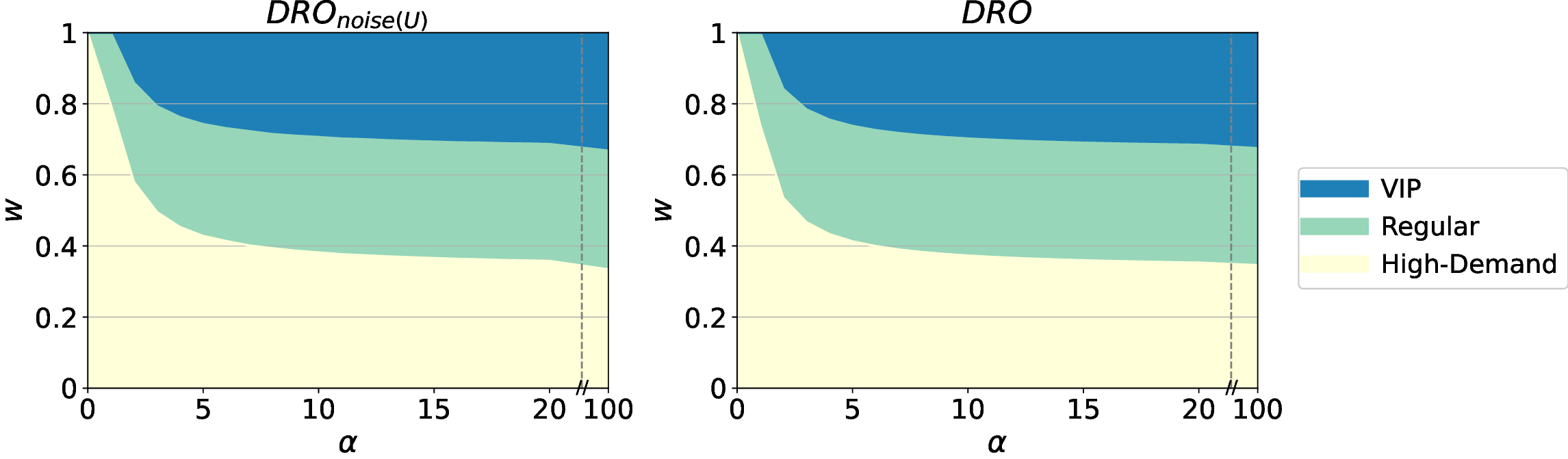}
	\caption{ Optimal Allocation Strategies of Base Station 1 Under Varying $\alpha$. $(\varepsilon = 0.01)$}
	\label{figure: SARA alpha}
\end{figure}

\subsubsection{The Price of Fairness and the Price of Ambiguity.}
In this section, we experiment with the price of ambiguity, as defined in Section~\ref{section: Price of Ambiguity}, and substantiate the dominance of the noisy model over the direct model.
Furthermore, following~\cite{bertsimas2011price}, we also examine the \emph{price of fairness}. In particular, denote the optimal value according to the utilitarian criterion as $\text{SYSTEM}_\text{F}$ and the optimal value under degree of $\alpha$-fairness as $\text{FAIR}( \alpha)$; i.e., for $\alpha = 0$, we have $\text{SYSTEM}_\text{F} = \text{FAIR}(0)$. 
The {price of fairness}, call it $\texttt{POF}( \alpha)$, is defined as
\begin{align*}
\texttt{POF}(\alpha) := \frac{\text{SYSTEM}_\text{F} - \text{FAIR}(\alpha)}{\text{SYSTEM}_\text{F}}.
\end{align*}
Similarly, we analyze $\mathtt{POF}$ for both the direct and noisy models. Note that, unlike the price of ambiguity, which has a unified baseline representing the no-ambiguity case, $\mathtt{POF}$ uses separate baselines $\text{SYSTEM}_{\text{F}}$ for the direct and noisy models, each given by its optimal value at $\alpha = 0$.

\begin{figure}[htbp]
\centering
\includegraphics[width=.8\linewidth]{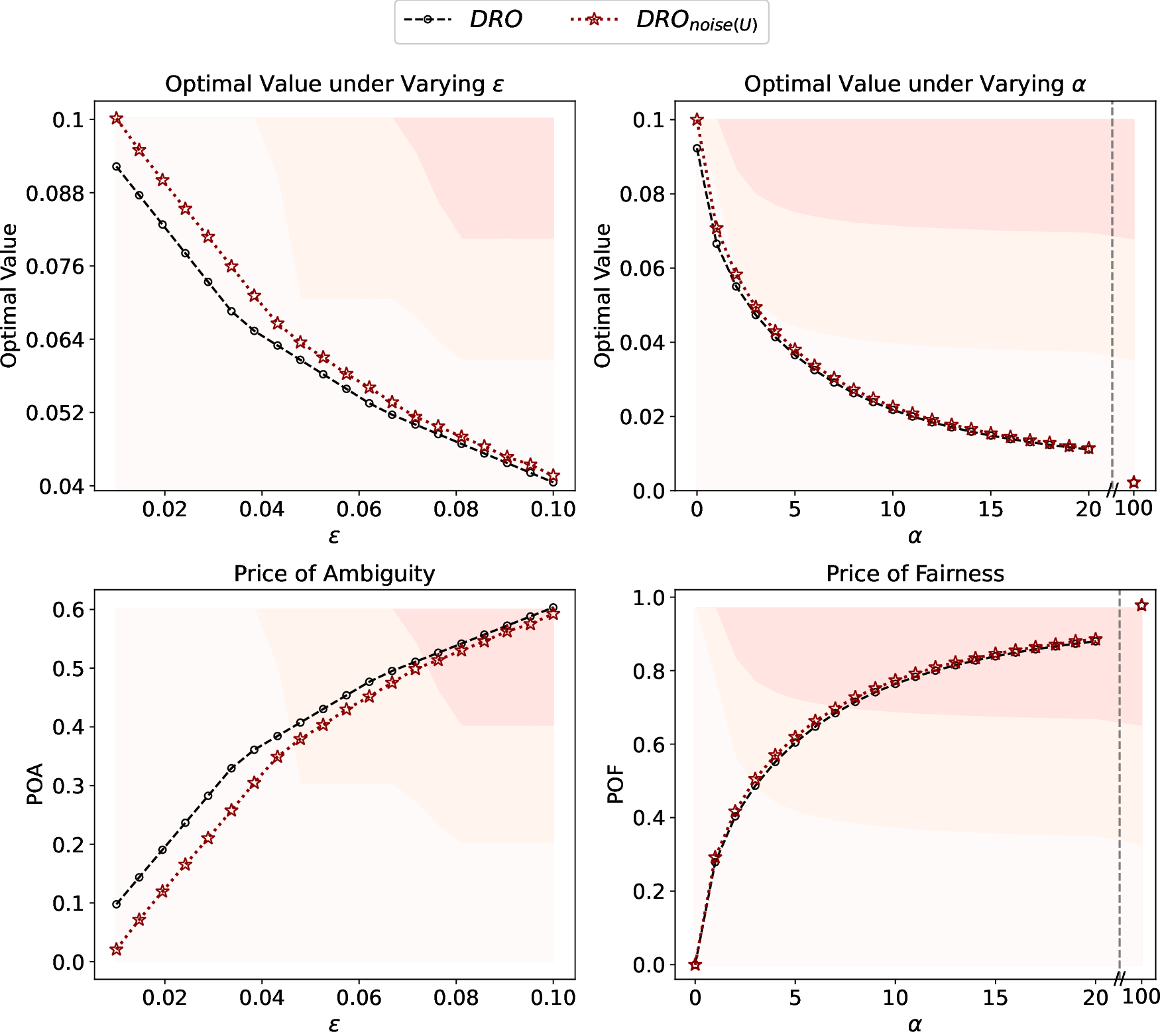}
\caption{Optimal Values and Corresponding \texttt{POF} and \texttt{POA} Under the Same Settings as Figures~\ref{figure: SARA epsilon}--\ref{figure: SARA alpha}. 
	Top Row: Optimal Values; Bottom Row: \texttt{POF} and \texttt{POA}. 
	The Left Two Panels: Vary $\varepsilon$; the Right Two Panels: Vary $\alpha$.}
\label{figure: POF and POA}
\end{figure}

Figure~\ref{figure: POF and POA} shows the optimal values (top row) and the corresponding values of \texttt{POF} and \texttt{POA} (bottom row), under the same settings as Figures~\ref{figure: SARA epsilon}--\ref{figure: SARA alpha}.
In particular, the left two panels show results as a function of $\varepsilon$, while the right two panels show results as a function of $\alpha$. The shaded background regions represent the optimal allocation strategies for $\texttt{DRO}_{\rm noise (U)}$, consistent with those shown in Figures~\ref{figure: SARA epsilon}--\ref{figure: SARA alpha}.
The results empirically confirm Theorem~\ref{theorem: opt values comparison}, demonstrating that the optimal values associated with $\texttt{DRO}_{\rm noise (U)}$ are uniformly higher than those with $\texttt{DRO}$.
Although both parameter variations $\varepsilon$ and $\alpha$, start at the same utility level and both lead to equitable distribution as their values increase, increasing $\varepsilon$ preserves a higher aggregate utility than increasing $\alpha$ for similar levels of equity. 

The bottom panels show similar monotonicity on \texttt{POA} and \texttt{POF}; i.e., increasing $\varepsilon$ leads to a higher value of \texttt{POA}, and increasing $\alpha$ results in a higher value of \texttt{POF}.
Moreover, the value of \texttt{POA} under the direct model is higher than that of the noisy model, consistent with Lemma~\ref{lemma: Dominance of the Price of Ambiguity}.
On the other hand, the noisy model yields slightly higher \texttt{POF} values than the direct model. 

Overall, introducing noise improves aggregate utility and lowers the price of ambiguity, but may result in a higher price of fairness. 
For similar levels of equity, increasing ambiguity $(\varepsilon)$ preserves more aggregate utility than increasing fairness $(\alpha)$.

It is worth noting that we have further tested other noise distribution models beyond the uniform case, for instance, truncated normal, softmax, Bernoulli, binomial, and Poisson distributions (see parameter settings in Appendix~\ref{appendix: noise distribution}).
The trends predicted by Theorem~\ref{theorem: opt values comparison} and Lemma~\ref{lemma: Dominance of the Price of Ambiguity} remain consistent across different noise models.
However, the extent to which noise leads to higher optimal values varies with the magnitude and support of the noise distribution, which in turn affects both \texttt{POA} and \texttt{POF}.

\section{Conclusion}
In this work, we introduced a novel distributionally robust optimization framework tailored for decision-making with noisy data. Our central contribution is a new method for constructing ambiguity sets: rather than building a ball around a naive or pre-processed empirical distribution, we define the set of plausible latent distributions as the pre-image of a Wasserstein ball in the observation space. This pull-back approach provides a principled mechanism to incorporate knowledge of the noise process directly into the robust optimization model.

We have shown that this framework is both theoretically sound and practically viable. We established finite-sample performance guarantees and demonstrated that the solutions to our noisy-data DRO problem converge to the true stochastic optimum as the number of samples increases. 
Furthermore, we derived an equivalent semi-infinite convex program for the DRO problem and discussed the conditions under which it becomes computationally tractable.

Perhaps most significantly, our analysis reveals that noisy data, when modeled correctly, can be a ``blessing in disguise." The resulting noisy-data DRO formulation can be less conservative than its direct counterpart, leading to demonstrably better optimal values and a lower price of ambiguity. Our numerical experiments on a fair resource allocation problem confirm this theoretical insight and further suggest a deep connection between ambiguity and fairness, where robustifying against uncertainty can steer solutions toward more equitable outcomes.

This work opens several avenues for future research. While we assumed a known noise kernel, developing methods to learn or robustify against uncertainty in the noise model itself is an important next step. Extending the framework to handle other forms of data corruption, such as adversarial attacks or heavy-tailed noise, would also be a valuable contribution. 
Moreover, while our analysis focuses on the order-$1$ Wasserstein distance, one avenue for future work is to consider order-$p$ Wasserstein metrics, as well as to more general optimal transport divergences.

\bibliographystyle{apalike}
\bibliography{refs}

\appendix

\section{Technical Proofs}

\subsection{Proofs in Section~\ref{section: Statistical Guarantees for Noisy-Data DRO}} \label{appendix: Statistical Guarantees for Noisy-Data DRO}

\begin{proof}[Proof of Theorem~\ref{theorem: Measure concentration}]
	The proof of part~$(i)$ is standard in the literature, e.g., see~\cite[Theorem 3.4]{mohajerin2018data}.
	
	To prove $(ii)$, we begin by noting that Inequality~\eqref{ineq: measure concentration} implies that
	\begin{align} 
		\mathbb{P}^{\star N} \{\widehat{\mathfrak{X}}^{\star}_N : d(\mathbb{F}^{\star}, \widehat{\mathbb{F}}^{\star}) < \varepsilon \} \geq
		\begin{cases}
			1- c_1 \exp (- c_2 N \varepsilon^{\max\{n,2\}}) &\text{ if } \varepsilon \leq 1;\\
			1- c_1 \exp (-c_2 N \varepsilon^a) &\text{ if } \varepsilon > 1,
		\end{cases}
	\end{align}
	Setting the right-hand side of the inequality above equal to $1-\beta$ and solving for $\varepsilon$ yields
	\begin{align*}
		\varepsilon_N (\beta) =
		\begin{cases}
			\left( \frac{\log(c_1 \beta^{-1})}{c_2 N}\right)^{ \tfrac{1}{ \max \{n,2\} } } &\text{ if } N \geq \frac{\log(c_1 \beta^{-1})}{c_2}; \\
			\left( \frac{\log(c_1 \beta^{-1})}{c_2 N}\right)^{ \tfrac{1}{ a}} & \text{ if } N < \frac{\log(c_1 \beta^{-1})}{c_2} .
		\end{cases}
	\end{align*}

\end{proof}

\begin{proof}[Proof of Corollary \ref{corollary: Concentration Inequality}]
	Let $\varepsilon_N (\beta)$ be chosen such that $\mathbb{P}^{\star N} \{\widehat{\mathfrak{X}}^{\star}_N : d(\mathbb{F}^{\star}, \widehat{\mathbb{F}}^{\star}) \leq \varepsilon_N (\beta) \} \geq  1- \beta$. By the definition of the Wasserstein ball $\mathcal{B}^\star_{ \varepsilon_N (\beta)}(\widehat{\mathbb{F}}^{\star})$, the condition $d(\mathbb{F}^{\star}, \widehat{\mathbb{F}}^{\star}) \leq \varepsilon_N (\beta)$ is equivalent to the condition $\mathbb{F}^\star \in \mathcal{B}^\star_{ \varepsilon_N (\beta)}(\widehat{\mathbb{F}}^{\star})$. This implies that
	$
	\{ \widehat{\mathfrak{X}}^{\star}_N : d(\mathbb{F}^{\star}, \widehat{\mathbb{F}}^{\star}) \leq \varepsilon_N (\beta) \} = \{\widehat{\mathfrak{X}}^{\star}_N : \mathbb{F}^\star \in \mathcal{B}^\star_{ \varepsilon_N (\beta)}(\widehat{\mathbb{F}}^{\star}) \}.
	$ 
	Thus, we obtain 
	\begin{align} \label{eq: concentration inequality for noisy distribution}
		\mathbb{P}^{\star N} \{ \widehat{\mathfrak{X}}^{\star}_N : \mathbb{F}^\star \in \mathcal{B}^\star_{ \varepsilon_N (\beta)}(\widehat{\mathbb{F}}^{\star})\} \geq 1- \beta.
	\end{align}
	Now, using the definition of ambiguity set $\mathcal{B}_{ \varepsilon_N (\beta), \mathbb{O}}(\widehat{\mathbb{F}}^{\star})$ from Equation~\eqref{eq: noisy Wasserstein ball}, we can relate~\eqref{eq: concentration inequality for noisy distribution} to the latent (noiseless) distribution~$\mathbb{F}$. Specifically, note that the condition $ \mathbb{F}^\star \in \mathcal{B}^\star_{ \varepsilon_N (\beta)}(\widehat{\mathbb{F}}^{\star}) $ is equivalent to~$ \mathbb{F} \in \mathcal{B}_{ \varepsilon_N (\beta), \mathbb{O}}(\widehat{\mathbb{F}}^{\star})$. This implies that the events they define are the same set of outcomes: 
	\[
	\{\widehat{\mathfrak{X}}^{\star}_N : \mathbb{F}^\star \in \mathcal{B}^\star_{ \varepsilon_N (\beta)}(\widehat{\mathbb{F}}^{\star})\} = \{\widehat{\mathfrak{X}}^{\star}_N : \mathbb{F} \in \mathcal{B}_{ \varepsilon_N (\beta), \mathbb{O}}(\widehat{\mathbb{F}}^{\star}) \}.
	\]
	Therefore, it follows that
	$
	\mathbb{P}^{\star N} \{\widehat{\mathfrak{X}}^{\star}_N : \mathbb{F} \in \mathcal{B}_{ \varepsilon_N (\beta), \mathbb{O}}(\widehat{\mathbb{F}}^{\star})\} \geq  1-\beta,
	$ 
	and the proof is complete.
\end{proof}

\begin{proof}[Proof of Theorem \ref{theorem: Finite Sample Guarantee under Noisy Data}]
	Let $\varepsilon_N(\beta)$ be chosen such that the concentration inequality from Corollary~\ref{corollary: Concentration Inequality} holds.
	Note that for all $\mathbb{F} \in  \mathcal{B}_{ \varepsilon_N (\beta), \mathbb{O}} (\widehat{\mathbb{F}}^{\star})$, we have
	$$
	\widehat{g}^*_{\text{noise}, N} = \inf_{\mathbb{Q} \in  \mathcal{B}_{ \varepsilon_N (\beta), \mathbb{O} }(\widehat{\mathbb{F}}^{\star})} \mathbb{E}^\mathbb{Q} [U(\widehat{w}^*_{\text{noise}, N}, X)] \leq \mathbb{E}^\mathbb{F} [U(\widehat{w}^*_{\text{noise}, N}, X)].
	$$
	Thus, 
	$
	\{\widehat{\mathfrak{X}}^{\star}_N: \mathbb{F} \in \mathcal{B}_{ \varepsilon_N (\beta), \mathbb{O}}(\widehat{\mathbb{F}}^{\star}) \} \subseteq \{\widehat{\mathfrak{X}}^{\star}_N: \widehat{g}^*_{\text{noise}, N}   \leq \mathbb{E}^\mathbb{F} [U(\widehat{w}^*_{\text{noise}, N}, X)]  \}.
	$
	Taking the probability $\mathbb{P}^{\star N}(\cdot)$ on both sides and using the monotonicity of the probability measure yields.
	\begin{align} \label{ineq: concentration imply out-of-sample guarantee}
		\mathbb{P}^{\star N}\{\widehat{\mathfrak{X}}^{\star}_N:  \mathbb{F} \in \mathcal{B}_{ \varepsilon_N (\beta), \mathbb{O}}(\widehat{\mathbb{F}}^{\star}) \} \leq \mathbb{P}^{\star N}\{ \widehat{\mathfrak{X}}^{\star}_N: \widehat{g}^*_{\text{noise}, N}   \leq \mathbb{E}^\mathbb{F} [U(\widehat{w}^*_{\text{noise}, N}, X)]  \}.
	\end{align}
	Using Corollary~\ref{corollary: Concentration Inequality} that the latent distribution $\mathbb{F}$ lies in $\mathcal{B}_{ \varepsilon_N (\beta), \mathbb{O}}(\widehat{\mathbb{F}}^{\star})$ with probability at least $1 - \beta$; i.e., $	\mathbb{P}^{\star N} \{ \widehat{\mathfrak{X}}^{\star}_N: \mathbb{F} \in \mathcal{B}_{ \varepsilon_N (\beta), \mathbb{O}}(\widehat{\mathbb{F}}^{\star})\} \geq  1-\beta$, in combination with \eqref{ineq: concentration imply out-of-sample guarantee}, we conclude
	\[
	\mathbb{P}^{\star N}\{ \widehat{\mathfrak{X}}^{\star}_N: \widehat{g}^*_{\text{noise}, N}   \leq \mathbb{E}^\mathbb{F} [ U(\widehat{w}^*_{\text{noise}, N}, X)]  \} \geq 1-\beta,
	\] 
	which completes the proof.
\end{proof}

\begin{proof}[Proof of Lemma~\ref{lemma: Noise to Latent Distance}]
	We proceed by contradiction. Assume that there exists a sequence $\{\mathbb{Q}_N\}_{N \geq 1}$ such that the noisy counterpart $\{\mathbb{Q}_N^\star \}_{N \geq 1}$ satisfying
	$\lim_{N \to \infty} d(\mathbb{Q}_N^\star, \mathbb{P}^\star) = 0$, yet
	$\lim_{N \to \infty} d(\mathbb{Q}_N, \mathbb{P}) > 0$. 
	
	The convergence in the Wasserstein metric, $d(\mathbb{Q}_N^\star, \mathbb{P}^\star) \to 0$, implies that 
	$\mathbb{Q}_N^\star$ converges weakly to~$\mathbb{P}^\star$.
	By L\'evy's continuity theorem, see \cite{durrett2019probability}, the convergence is equivalent to the pointwise convergence of their characteristic functions, i.e., $\varphi_{\mathbb{Q}_N^\star}(t)  \to  \varphi_{\mathbb{P}^\star}(t)$ as $N \to \infty$ for all~$t \in \mathbb{R}^n$.
	By the convolution theorem for characteristic functions under the additive noise model, this implies~$\varphi_{\mathbb{Q}_N^\star}(t) = \varphi_{\mathbb{F}_E}(t) \varphi_{\mathbb{Q}_N}(t)$ 
	and
	$\varphi_{\mathbb{P}^\star}(t) = \varphi_{\mathbb{F}_E} (t)\varphi_{\mathbb{P}}(t)$ for all $t \in \mathbb{R}^n$. Hence, for all $t \in \mathbb{R}^n$, we~have
	$$
	\varphi_{\mathbb{F}_E}(t) \varphi_{\mathbb{Q}_N}(t) \to \varphi_{\mathbb{F}_E} (t)\varphi_{\mathbb{P}}(t) \text{ as } N \to \infty.
	$$
	
	Let $D:= \{ t \in \mathbb{R}^n: \varphi_{\mathbb{F}_E}(t) \neq 0 \}.$ By assumption, the complement of $D$, i.e., $D^c = \{ t \in \mathbb{R}^n: \varphi_{\mathbb{F}_E}(t) = 0 \}$ has Lebesgue measure zero, which implies $D$ is dense in $\mathbb{R}^n$.
	For any $t \in D$, we can divide by $\varphi_{\mathbb{F}_E}(t)$ to get pointwise convergence on this dense set: $\varphi_{\mathbb{Q}_N}(t) \to \varphi_{\mathbb{P}}(t)$ all $t \in D$.
	
	Now, we extend this convergence from the dense set $D$ to all of $\mathbb{R}^n$. Since the measures $\{\mathbb{Q}_N\}$ and $\mathbb{P}$ are supported on the same compact set $\mathfrak{X}$, their characteristic functions are uniformly equi-continuous. Specifically, let $R := \sup_{x \in \mathfrak{X}} \|x\|_2 < \infty$. Then, for any of these measures $\mu$ and any~$s,t \in \mathbb{R}^n$, we have
	\begin{align} \label{ineq: uniform Lipschitz bound_initial}
		|\varphi_\mu(t) - \varphi_\mu(s)| 
		&= |\mathbb{E}[e^{i\langle t, X \rangle} - e^{i\langle s, X \rangle}]| \notag \\
		&\leq \mathbb{E}[|e^{i\langle t-s, X \rangle} - 1|] \notag\\
		&\leq \mathbb{E}[|\langle t-s, X \rangle|] \notag \\
		& \leq \|t-s\|_2 \mathbb{E}[\|X\|_2] \leq R \|t-s\|_2, 
	\end{align}
	where the third inequality holds by the fundamental inequality $|e^{i\theta} - 1| \leq |\theta|$ for $\theta = \langle t-s, X \rangle \in \mathbb{R}$. The second last inequality holds by the Cauchy-Schwarz inequality.
	Fix $t \in \mathbb{R}^n$ and $\varepsilon>0$. Since $D$ is dense, we can choose $u \in D$ such that $R\|t - u\|_2 < \tfrac{\varepsilon}{3}$.
	For any $N$, using the triangle inequality, we~have 
	\begin{align*}
		|\varphi_{\mathbb{Q}_N}(t) - \varphi_{\mathbb{P}}(t)| \leq |\varphi_{\mathbb{Q}_N}(t) - \varphi_{\mathbb{Q}_N}(u)| + |\varphi_{\mathbb{Q}_N}(u) - \varphi_{\mathbb{P}}(u)| + |\varphi_{\mathbb{P}}(u) - \varphi_{\mathbb{P}}(t)|.
	\end{align*}
	Using the uniform Lipschitz bound~\eqref{ineq: uniform Lipschitz bound_initial} on the first and third terms gives
	\begin{align*}
		|\varphi_{\mathbb{Q}_N}(t) - \varphi_{\mathbb{P}}(t)| 
		& \leq R\|t-u\|_2 + |\varphi_{\mathbb{Q}_N}(u) - \varphi_{\mathbb{P}}(u)| + R\|t-u\|_2 \\
		& < \frac{2\varepsilon}{3} + |\varphi_{\mathbb{Q}_N}(u) - \varphi_{\mathbb{P}}(u)|.
	\end{align*}
	Since $u \in D$, the middle term $|\varphi_{\mathbb{Q}_N}(u) - \varphi_{\mathbb{P}}(u)| \to 0$ as $N \to \infty$. Thus, for $N$ sufficiently large,~$|\varphi_{\mathbb{Q}_N}(u) - \varphi_{\mathbb{P}}(u)| < \tfrac{\varepsilon}{3}$, which implies  $|\varphi_{\mathbb{Q}_N}(t) - \varphi_{\mathbb{P}}(t)| < \varepsilon$. As $\varepsilon$ was arbitrary, we have shown that~$\varphi_{\mathbb{Q}_N}(t) \to \varphi_{\mathbb{P}}(t)$ for all $t \in \mathbb{R}^n$.
	
	Now, applying L\'evy continuity theorem again, $\mathbb{Q}_N$ converges weakly to $\mathbb{P}$.
	Note that since all measures~$\{\mathbb{Q}_N\}$ and the measure $\mathbb{P}$ are supported on the common compact set $\mathfrak{X}$, weak convergence is equivalent to convergence in the Wasserstein metric. Therefore, we have 
	$
	\lim_{N \to \infty} d(\mathbb{Q}_N, \mathbb{P}) =0,
	$
	which contradicts the assumption that $\lim_{N \to \infty} d(\mathbb{Q}_N, \mathbb{P}) > 0$.
	Hence, the lemma must hold.
\end{proof}

\begin{proof}[Proof of Theorem~\ref{theorem: Asymptotic Consistency}]
	To prove part~$(i)$, we take $\widehat{w}^*_{\text{noise}, N} \in \mathcal{W}$. From the formulation of DRO, we know that for any $N \in \mathbb{N}$, we have
	$ \widehat{g}^*_{\text{noise}, N} \leq \mathbb{E}^{\mathbb{F}} [U(\langle \widehat{w}^*_{\text{noise}, N}, X \rangle)] \leq g_{SO}^*$ for $\mathbb{F} \in \mathcal{B}_{\varepsilon_N (\beta_N), \mathbb{O}}(\widehat{\mathbb{F}}^\star)$.
	It follows that
	\begin{align*}
		\mathbb{P}^{\star N} \{ \widehat{\mathfrak{X}}^{\star}_N: \widehat{g}^*_{\text{noise}, N} \leq \mathbb{E}^{\mathbb{F}} [U( \langle \widehat{w}^*_{\text{noise}, N}, X \rangle)] \leq g^*_{SO}\} 
		&\geq \mathbb{P}^{\star N} \{ \widehat{\mathfrak{X}}^{\star}_N: \mathbb{F} \in  \mathcal{B}_{ \varepsilon_N (\beta_N), \mathbb{O}} (\widehat{\mathbb{F}}^\star)\}.
	\end{align*}
	Applying Corollary~\ref{corollary: Concentration Inequality} yields that $\mathbb{P}^{\star N} \{ \widehat{\mathfrak{X}}^{\star}_N : \mathbb{F} \in  \mathcal{B}_{ \varepsilon_N (\beta_N), \mathbb{O}} (\widehat{\mathbb{F}}^\star)\} \geq 1 - \beta_N $, which implies that  
	\begin{align*}
		\mathbb{P}^{\star N} \{ \widehat{\mathfrak{X}}^{\star}_N: \widehat{g}^*_{\text{noise}, N} \leq \mathbb{E}^{\mathbb{F}} [U( \langle \widehat{w}^*_{\text{noise}, N}, X \rangle)] \leq g^*_{SO}\} 
		&\geq 1 - \beta_N.
	\end{align*}
	Define the event 
	$
	A_N := \{ \widehat{\mathfrak{X}}^{\star}_N: \widehat{g}^*_{\text{noise}, N} \leq \mathbb{E}^{\mathbb{F}} [U( \langle \widehat{w}^*_{\text{noise}, N}, X \rangle)] \leq g^*_{SO}\}
	$, 
	and let $B_N := A_N^c$ denote its complement.
	Then
	$\mathbb{P}^{\star N} (B_N) \leq \beta_N$.
	Since $\sum_{N=1}^{\infty} \beta_N < \infty$, the Borel–Cantelli Lemma, e.g., see \cite{durrett2019probability}, implies that the probability of events $B_N$ occurring infinitely often is zero, i.e.,
	$
	\mathbb{P}^{\star \infty} (\limsup_{N\to \infty} B_N) = 0,
	$
	which implies
	\begin{align*}
		&\mathbb{P}^{\star \infty} \{ \widehat{\mathfrak{X}}^{\star}_N: A_N \text{ holds for all sufficiently large $N$}\}\\
		&=
		\mathbb{P}^{\star \infty} \{ \widehat{\mathfrak{X}}^{\star}_N: \widehat{g}^*_{\text{noise}, N} \leq \mathbb{E}^{\mathbb{F}} [U( \langle \widehat{w}^*_{\text{noise}, N}, X \rangle)] \leq g^*_{SO} \text{ for all sufficiently large $N$}\} = 1.
	\end{align*}
	That is, we have $\limsup_{N \to \infty} \widehat{g}^*_{\text{noise}, N} \leq g^*_{SO}$ with probability one.
	
	To complete the proof of $(i)$, it remains to show that $\liminf_{N \to \infty} \widehat{g}^*_{\text{noise}, N} \geq g^*_{SO}$ almost surely.
	Fix an arbitrary $\delta >0$, and let $w_{\delta} \in \mathcal{W}$ be a $\delta$-optimal solution for $\sup_{w \in \mathcal{W}} \mathbb{E}^{\mathbb{F}} [U( \langle w, X \rangle )]$, i.e.,~ $\mathbb{E}^{\mathbb{F}} [U(\langle w_{\delta}, X \rangle)] \geq g_{SO}^* - \delta$.
	For each $N \in \mathbb{N}$, let $\widehat{\mathbb{Q}}_N \in \mathcal{B}_{ \varepsilon_N (\beta_N), \mathbb{O}} (\widehat{\mathbb{F}}^\star)$ be a distribution that is $\delta$-optimal corresponding to $w_{\delta}$, satisfying $\inf_{\mathbb{Q} \in \mathcal{B}_{ \varepsilon_N (\beta_N), \mathbb{O} } (\widehat{\mathbb{F}}^\star)} \mathbb{E}^{\mathbb{Q}} [U( \langle w_{\delta}, X \rangle)] \geq \mathbb{E}^{\widehat{\mathbb{Q}}_N} [U( \langle w_{\delta}, X \rangle)] - \delta$. 
	Then, we observe that
	\begin{align}
		\liminf_{N \to \infty} \widehat{g}^*_{\text{noise}, N} 
		&\geq \liminf_{N \to \infty} \inf_{\mathbb{Q} \in \mathcal{B}_{ \varepsilon_N (\beta_N), \mathbb{O} } (\widehat{\mathbb{F}}^\star)} \mathbb{E}^{\mathbb{Q}} [U( \langle w_{\delta}, X \rangle)] \notag\\
		&\geq \liminf_{N \to \infty} \mathbb{E}^{\widehat{\mathbb{Q}}_N} [U( \langle w_{\delta}, X \rangle)] - \delta \notag\\
		&=
		\liminf_{N \to \infty} \left( \mathbb{E}^{\mathbb{F}} [ U( \langle w_{\delta}, X \rangle) ] + \left( \mathbb{E}^{\widehat{\mathbb{Q}}_N} [ U( \langle w_{\delta}, X \rangle) ] - \mathbb{E}^\mathbb{F} [ U( \langle w_{\delta}, X \rangle) ] \right)  \right) - \delta. \label{eq: before Kantorovich-Rubenstein duality}
	\end{align}
	
	Since $U(\langle w_{\delta}, x \rangle)$ is $L$-Lipschitz continuous on $\mathfrak{X}$, by the Kantorovich-Rubinstein duality, we have
	$$
	\mathbb{E}^\mathbb{F} [ U( \langle w_{\delta}, X \rangle) ] - \mathbb{E}^{\widehat{\mathbb{Q}}_N} [ U( \langle w_{\delta}, X \rangle) ] 
	\leq 
	\sup_{f \in \mathcal{L}} \left(  \mathbb{E}^\mathbb{F} [ U( \langle w_{\delta}, X \rangle) ] - \mathbb{E}^{\widehat{\mathbb{Q}}_N} [ U( \langle w_{\delta}, X \rangle) ] \right)
	=  L \cdot d(\widehat{\mathbb{Q}}_N, \mathbb{F}),
	$$
	where  $\mathcal{L}$ is the set of all $L$-Lipschitz functions satisfying $|f(x) - f(x')| \leq L\| x - x'\|$ for all $x, x' \in \mathfrak{X}$.
	Applying the same bound for $-U$, we can further bound Inequality~\eqref{eq: before Kantorovich-Rubenstein duality} as follows:
	\begin{align}
		& \liminf_{N \to \infty} \left( \mathbb{E}^{\mathbb{F}} [ U( \langle w_{\delta}, X \rangle) ] + \left( \mathbb{E}^{\widehat{\mathbb{Q}}_N} [ U( \langle w_{\delta}, X \rangle) ] - \mathbb{E}^\mathbb{F} [ U( \langle w_{\delta}, X \rangle) ] \right) \right) - \delta \notag\\
		& \qquad \geq 
		\liminf_{N \to \infty} \left( \mathbb{E}^{\mathbb{F}} [ U( \langle w_{\delta}, X \rangle) ] -  L\cdot d (\widehat{\mathbb{Q}}_N, \mathbb{F}) \right)- \delta \notag \\
		& \qquad =    \mathbb{E}^{\mathbb{F}} [ U( \langle w_{\delta}, X \rangle) ] -  \liminf_{N \to \infty} L\cdot d (\widehat{\mathbb{Q}}_N, \mathbb{F})  - \delta. \label{eq: Kantorovich-Rubenstein duality}
	\end{align}
	
	We now prove the almost sure convergence of the distributions in noisy space, namely, for $\widehat{\mathbb{Q}}_N^\star \in \mathcal{B}_{ \varepsilon_N (\beta_N), \mathbb{O}} (\widehat{\mathbb{F}}^\star)$, we shall establish
	$
	\mathbb{P}^{\star \infty} \{ \widehat{\mathfrak{X}}^{\star}_N : \lim_{N \to \infty} d(\widehat{\mathbb{Q}}_N^\star, \mathbb{F}^\star) = 0
	\} =  1.
	$ 
	Since $\mathfrak{X} \subseteq \mathfrak{X}^{\star} \subseteq \mathbb{R}^n$, applying the triangle inequality for the Wasserstein distance, we have:
	\begin{align*}
		d(\widehat{\mathbb{Q}}_N^\star, \mathbb{F}^\star) 
		&\leq d(\widehat{\mathbb{Q}}_N^\star, \widehat{\mathbb{F}}^\star) + d(\widehat{\mathbb{F}}^\star, \mathbb{F^\star})  \\
		&\leq \varepsilon_N (\beta_N) + d(\widehat{\mathbb{F}}^\star, \mathbb{F^\star}) 
	\end{align*}
	where the second inequality holds since $\widehat{\mathbb{Q}}_N^\star \in \mathcal{B}_{ \varepsilon_N (\beta_N), \mathbb{O}} (\widehat{\mathbb{F}}^\star)$ implies $d(\widehat{\mathbb{Q}}_N^\star, \widehat{\mathbb{F}}^\star) \leq \varepsilon_N (\beta_N)$.
	Moreover, from Corollary~\ref{corollary: Concentration Inequality}, we know that
	$\mathbb{P}^{\star N} \{ \widehat{\mathfrak{X}}^{\star}_N :  d(\widehat{\mathbb{F}}^{\star}, \mathbb{F}^\star) \leq \varepsilon_N (\beta_N)\} \geq  1-\beta_N$, which is equivalent to 
	$$
	\mathbb{P}^{\star N} \{  \widehat{\mathfrak{X}}^{\star}_N : \varepsilon_N (\beta_N) + d(\widehat{\mathbb{F}}^\star, \mathbb{F}^\star) \leq 2 \varepsilon_N (\beta_N) \} \geq  1-\beta_N.
	$$
	Define the event $C_N := \{  \widehat{\mathfrak{X}}^{\star}_N :  \varepsilon_N (\beta_N)  + d(\widehat{\mathbb{F}}^\star, \mathbb{F}^\star) \leq 2 \varepsilon_N (\beta_N) \}$, and let $D_N := C_N^c$ be its complement. 
	Then, $\mathbb{P}^{\star N} (D_N) \leq \beta_N$.
	Since $\sum_{N=1}^{\infty} \beta_N < \infty$, the Borel–Cantelli Lemma implies that $\mathbb{P}^{\star \infty} (\limsup_{N \to \infty} D_N ) = 0$. Hence, we have
	$$
	\mathbb{P}^{\star \infty} \{ \widehat{\mathfrak{X}}^{\star}_N : \varepsilon_N (\beta_N)  + d(\widehat{\mathbb{F}}^\star, \mathbb{F}^\star) 
	\leq 2 \varepsilon_N (\beta_N) \text{ for all sufficiently large } N \} =  1.
	$$
	Because $\lim_{N \to \infty} \varepsilon_N (\beta_N) = 0$, it follows that 
	$\lim_{N \to \infty} d(\widehat{\mathbb{Q}}_N^\star, \mathbb{F}^\star) = 0$ almost surely, i.e.,
	$$
	\mathbb{P}^{\star \infty} \{ \widehat{\mathfrak{X}}^{\star}_N : \lim_{N \to \infty} d(\widehat{\mathbb{Q}}_N^\star, \mathbb{F}^\star) = 0
	\} =  1.
	$$
	We now transfer this convergence result from the noisy space to the latent space.
	We have already established that for almost every sample path, $\lim_{N \to \infty} d(\widehat{\mathbb{Q}}_N^\star, \mathbb{F}^\star) = 0$.
	Consider any such sample path. We will show  that this implies $\lim_{N \to \infty} d(\widehat{\mathbb{Q}}_N, \mathbb{F}) = 0$. Assume the contrary, i.e., that $d(\widehat{\mathbb{Q}}_N, \mathbb{F})$ does not converge to 0. Then, there must exist an $\eta > 0$ and a subsequence $\{N_k\}_{k=1}^\infty$ such that $d(\widehat{\mathbb{Q}}_{N_k}, \mathbb{F}) > \eta$ for all $k$.
	For this same subsequence, however, we still have $\lim_{k \to \infty} d(\widehat{\mathbb{Q}}_{N_k}^\star, \mathbb{F}^\star) = 0$. This presents a sequence of distributions $\{\widehat{\mathbb{Q}}_{N_k}\}$ for which~$\lim_{k \to \infty} d(\widehat{\mathbb{Q}}_{N_k}^\star, \mathbb{F}^\star) = 0$ but $\lim_{k \to \infty} d(\widehat{\mathbb{Q}}_{N_k}, \mathbb{F}) \neq 0$. This is a direct contradiction to Lemma~\ref{lemma: Noise to Latent Distance}.
	Thus, our assumption must be false. It follows that for any sample path where the noisy distributions converge, the latent distributions must also converge. Since the set of such sample paths has a probability of 1, we conclude that:
	$$
	\mathbb{P}^{\star \infty} \{ \widehat{\mathfrak{X}}^{\star}_N : \lim_{N \to \infty} d(\widehat{\mathbb{Q}}_N, \mathbb{F}) = 0\} =  1.
	$$
	
	Returning to Equation~\eqref{eq: Kantorovich-Rubenstein duality}, we conclude that
	\begin{align*}
		\liminf_{N \to \infty} \widehat{g}^*_{\text{noise}, N} 
		&\geq \mathbb{E}^{\mathbb{F}} [ U( \langle w_{\delta}, X \rangle) ] - \delta, \quad \mathbb{P}^{\star, \infty}\text{-almost surely}\\
		& \geq g^*_{SO} - 2\delta .
	\end{align*}
	The last inequality holds because $w_\delta$ was chosen to be $\delta$-suboptimal for $g^*_{SO}$.
	Since~$\delta > 0$ was arbitrary, we conclude
	$\liminf_{N \to \infty} \widehat{g}^*_{\text{noise}, N} \geq g^*_{SO}$.
	Combined with the earlier bound~$\limsup_{N \to \infty} \widehat{g}^*_{\text{noise}, N} \leq g^*_{SO}$, we obtain 
	$\lim_{N \to \infty} \widehat{g}_{\text{noise}, N}^* = g^*_{SO}$ almost surely.

	To prove part~$(ii)$, we fix an arbitrary realization of $\{\widehat{X}_j\}_{j=1}^N$ such that for all sufficiently large~$N$,
	$$
	\widehat{g}_{\text{noise}, N}^* \leq \mathbb{E}^\mathbb{F} [U (\langle \widehat{w}_{\text{noise}, N}^*, X \rangle) ] \leq g_{SO}^*.
	$$
	Taking the limit superior on the above inequalities and noting that, from part $(i)$, we have $\lim_{N \to \infty} \widehat{g}_{\text{noise}, N}^* = g_{SO}^*$ almost surely. Therefore, it follows that
	\begin{align}
		g_{SO}^* 
		= \lim_{N \to \infty} \widehat{g}_{\text{noise}, N}^* 
		&\leq
		\limsup_{N \to \infty} \mathbb{E}^\mathbb{F} [U (\langle \widehat{w}_{\text{noise}, N}^*, X \rangle) ] \notag \\
		&\leq 
		\mathbb{E}^\mathbb{F} [\limsup_{N \to \infty} U (\langle \widehat{w}_{\text{noise}, N}, X \rangle) ],  \label{ineq: g_SO bound}
	\end{align}
	where the last inequality is obtained by reverse Fatou's lemma, since $U$ is bounded by a finite $\mathfrak{X}$ for fixed $w$.
	Since~$\mathcal{W}$ is compact, the sequence $\{\widehat{w}^*_{\text{noise} , N}\}_{N \in \mathbb{N}}$ has an accumulation point $\bar{w} \in \mathcal{W}$.
	By selecting a convergent subsequence, we can assume without loss of generality that $\lim_{N \to \infty} \widehat{w}_{\text{noise}, N} = \bar{w} \in \mathcal{W}$.
	Since $U$ is Lipschitz continuous, it is also continuous and hence upper semicontinuous. Therefore, by the definition of upper semicontinuity, we can further bound~\eqref{ineq: g_SO bound} and obtain
	\begin{align*}
		\mathbb{E}^\mathbb{F} [\limsup_{N \to \infty} U (\langle \widehat{w}_{\text{noise}, N}, X \rangle) ] 
		&\leq 
		\mathbb{E}^\mathbb{F}[U(\langle \bar{w}, X\rangle)] \\
		&\leq g_{SO}^*,
	\end{align*}
	the last inequality holds since $\bar{w} \in \mathcal{W}$.
	Putting everything together, we obtain 
	$
	\mathbb{E}^\mathbb{F}[U(\langle \bar{w}, X \rangle)] 
	= g_{SO}^*.
	$
	Thus, 
	$
	\mathbb{P}^{\star \infty} \{ \widehat{\mathfrak{X}}^{\star}_N : \mathbb{E}^\mathbb{F}[U(\langle \bar{w}, X\rangle)] 
	= g_{SO}^*
	\} =  1,
	$
	which implies that $\bar{w}$ is almost surely an optimal solution for the stochastic optimization problem with probability one.
\end{proof}

\subsection{Proofs in Section~\ref{section: main results}} \label{appendix: proof in Section: main results}

\begin{proof}[Proof of Lemma~\ref{lemma: An Equivalent Representation of Noisy-Data DRO}]  
	Fix $\varepsilon > 0$. We begin by writing the inner worst-case expectation of the noisy-data DRO problem~\eqref{eq: noise DRO}~as
	\begin{align*}
		\inf_{ \mathbb{F} \in \mathcal{B}_{\varepsilon, \mathbb{O}} (\widehat{\mathbb{F}}^{\star})} \mathbb{E}^{\mathbb{F}} \left[  U \left( \langle w, X \rangle \right) \right] 
		&= \begin{cases} 
			\displaystyle\inf_{\Pi, \mathbb{F}} \int_{x \in \mathfrak{X}} U \left( \langle w, x \rangle \right) d \mathbb{F}(x) \\
			{\rm s.t.} \; \displaystyle\mathbb{E}^{\Pi} [\|X^\star -\widehat{X}^\star\| ] \leq \varepsilon; \\
			\qquad  \Pi \in \mathcal{C}(T_{\mathbb{O}}(\mathbb{F}), \widehat{\mathbb{F}}^\star)
		\end{cases}\\
		&=\begin{cases} 
			\displaystyle\inf_{\Pi, \mathbb{F}} \int_{x \in \mathfrak{X}} U \left( \langle w, x \rangle \right) d\mathbb{F}(x) \\
			{\rm s.t.} \; \displaystyle\int_{x^{\star} \in \mathfrak{X}^\star, \widehat{x}^\star \in \mathfrak{X}^\star} \|x^\star - \widehat{x}^\star\|\, d \Pi(\widehat{x}^\star, x^\star) \leq \varepsilon,\\
			\qquad  \Pi \in \mathcal{C}(T_{\mathbb{O}}(\mathbb{F}), \widehat{\mathbb{F}}^\star),
		\end{cases}
	\end{align*}
	where $\Pi$ represents any valid joint distribution (coupling) of a random variable $X^\star \sim T_{\mathbb{O}}(\mathbb{F})$ and $\widehat{X}^\star \sim \widehat{\mathbb{F}}^\star$.
	Since the target measure $\widehat{\mathbb{F}}^\star$ is empirical, any such coupling $\Pi$ can be decomposed as
	$
	\Pi = \frac{1}{N}\sum_{j=1}^{N} \delta_{\widehat{x}_j^\star} \otimes G_j^\star,
	$
	where $\{G_j^\star\}_{j=1}^N$ are probability measures on $\mathfrak{X}^\star$ that satisfy the marginal constraint $\frac{1}{N}\sum_{j=1}^{N} G_j^\star = T_{\mathbb{O}}(\mathbb{F})$. The problem is to find the infimum over all valid latent measures $\mathbb{F}$ and all valid component measures ${G_j^\star}$.

	The objective function only depends on $\mathbb{F}$. Any latent distribution $\mathbb{F}$ can be represented as a mixture $\mathbb{F} = \frac{1}{N} \sum_{j=1}^N \alpha_j$ for some (not necessarily unique) component latent measures $\{\alpha_j\}_{j=1}^N \subset \mathcal{M}(\mathfrak{X})$. By linearity of the noise operator $T_\mathbb{O}$, the marginal constraint becomes:
	$$
	\frac{1}{N}\sum_{j=1}^N G_j^\star = T_{\mathbb{O}}(\mathbb{F}) = T_{\mathbb{O}}\left(\frac{1}{N}\sum_{j=1}^N \alpha_j\right) = \frac{1}{N}\sum_{j=1}^N T_{\mathbb{O}}(\alpha_j).
	$$
	A sufficient way to satisfy this is to set $G_j^\star = T_{\mathbb{O}}(\alpha_j)$ for each $j$. 
	This allows us to reparameterize the optimization problem in terms of an unconstrained choice of component latent measures $\{\alpha_j\}_{j=1}^N \subset \mathcal{M}(\mathfrak{X})$. The objective function becomes $\mathbb{E}^{\mathbb{F}}[U\langle w, x \rangle] =  \frac{1}{N} \sum_{j=1}^{N} \int_{x \in \mathfrak{X}} U \left( \langle w, x \rangle \right) d \alpha_j(x)$, and the cost constraint becomes:
	$$
	\frac{1}{N}\sum_{j=1}^{N} \int_{x^{\star} \in \mathfrak{X}^\star} \|x^\star - \widehat{x}_j^\star\| dG_j^\star(x^\star) = \frac{1}{N}\sum_{j=1}^{N} \int_{x \in \mathfrak{X}} \left( \int_{x^\star \in \mathfrak{X}^\star} \|x^{\star} - \widehat{x}_j^\star\| d\mathbb{O}(x^{\star} \mid x) \right) d\alpha_j(x) \leq \varepsilon
	$$
	Therefore, the inner problem is thus equivalent to:
	\begin{align*}
		\inf_{\mathbb{F} \in \mathcal{B}_{\varepsilon, \mathbb{O}} (\widehat{\mathbb{F}}^{\star})}  \mathbb{E}^{\mathbb{F}} [ U \left( \langle w, X \rangle \right) ]
		&=  \begin{cases} 
			\displaystyle\inf_{\alpha_j \in  \mathcal{M}(\mathfrak{X}), \forall j} \frac{1}{N} \sum_{j=1}^{N} \int_{x \in \mathfrak{X}} U \left( \langle w, x \rangle \right) d \alpha_j(x)\\
			{\rm s.t.} \; \displaystyle \frac{1}{N} \sum_{j=1}^{N} \int_{x \in \mathfrak{X}} \left( \int_{x^\star \in \mathfrak{X}^\star} \|x^{\star} - \widehat{x}_j^\star\| d\mathbb{O}(x^{\star} \mid x) \right) d\alpha_j(x) \leq \varepsilon.
		\end{cases}
	\end{align*}
	Introducing a Lagrange multiplier $\lambda \geq 0$, we obtain the Lagrangian dual problem:
	\begin{align}
		\inf_{\mathbb{F} \in \mathcal{B}_{\varepsilon, \mathbb{O}} (\widehat{\mathbb{F}}^{\star})}  \mathbb{E}^{\mathbb{F}} [ U \left( \langle w, X \rangle \right) ] 
		&= \inf_{\alpha_j \in \mathcal{M}(\mathfrak{X}), \forall j} \sup_{\lambda \geq 0}  \mathcal{L}( \{ \alpha_j\}, \lambda) 
	\end{align}
	where $\mathcal{L}( \{ \alpha_j\}, \lambda) := - \lambda \varepsilon + \frac{1}{N} \sum_{j=1}^{N} \int_{x \in \mathfrak{X}} \left( U \left( \langle w, x \rangle \right) + \lambda \int_{x^\star \in \mathfrak{X}^\star} \|x^{\star}-\widehat{x}_j^\star\| d\mathbb{O}(x^{\star} \mid x) \right) d\alpha_j(x)$.
	Note that for each fixed $\lambda$, $\mathcal{L}(\cdot )$ is continuous and linear in $\{\alpha_j \}$, hence is convex and lower semicontinuous; for each fixed $\{\alpha_j \}$, it is affine, hence concave and continuous, in $\lambda$. The domain for the measures $\mathcal{M}(\mathfrak{X})$ is convex and is compact in the weak* topology because the underlying space $\mathfrak{X}$ is compact, see \cite{aliprantis2006infinite}, and the dual domain $[0, \infty)$ is convex. Therefore, Sion's minimax theorem applies and yields
	\begin{align}
		&\inf_{\alpha_j  \in \mathcal{M}(\mathfrak{X}), \forall j} \sup_{\lambda \geq 0}  \mathcal{L}( \{ \alpha_j \}, \lambda)  \notag \\
		&\qquad =  \sup_{\lambda \geq 0}  \inf_{\alpha_j  \in \mathcal{M}(\mathfrak{X}), \forall j}  \mathcal{L}( \{ \alpha_j \}, \lambda) \notag \\
		&\qquad =
		\sup_{\lambda \geq 0} \inf_{\alpha_j  \in \mathcal{M}(\mathfrak{X}), \forall j} - \lambda \varepsilon + \frac{1}{N} \sum_{j=1}^{N}\int_{x \in \mathfrak{X}} \bigg( U \left( \langle w, x \rangle \right) + \lambda \int_{x^\star \in \mathfrak{X}^\star} \|x^{\star}-\widehat{x}_j^\star\| d\mathbb{O}(x^{\star} \mid x) \bigg) d\alpha_j (x) \notag \\
		&\qquad = 
		\sup_{\lambda \geq 0} - \lambda \varepsilon + \inf_{\alpha_j  \in \mathcal{M}(\mathfrak{X}), \forall j} \frac{1}{N} \sum_{j=1}^{N}\int_{x \in \mathfrak{X}} \bigg( U \left( \langle w, x \rangle \right) + \lambda \int_{x^\star \in \mathfrak{X}^\star}  \|x^{\star}-\widehat{x}_j^\star\|  d\mathbb{O}(x^{\star} \mid x) \bigg) d\alpha_j (x). \label{eq: minimax (noise)}
	\end{align}
	Since the optimization over $\{\alpha_j\}$ is unconstrained (beyond each being a probability measure) and the objective is a sum, the problem~\eqref{eq: minimax (noise)} decouples:
	$$
	\sup_{\lambda \geq 0} - \lambda \varepsilon + \frac{1}{N} \sum_{j=1}^{N}  \inf_{\alpha_j  \in \mathcal{M}(\mathfrak{X})} \int_{x \in \mathfrak{X}} \bigg( U \left( \langle w, x \rangle \right) + \lambda \int_{x^\star \in \mathfrak{X}^\star}  \|x^{\star}-\widehat{x}_j^\star\|  d\mathbb{O}(x^{\star} \mid x) \bigg) d\alpha_j (x). \nonumber
	$$
	For each $j$, the inner infimum is a linear functional over the space of probability measures. The infimum is therefore attained at a Dirac delta measure concentrated on the point $x \in \mathfrak{X}$ that minimizes the integrand. Thus, we have:
	\begin{align*}
		\inf_{\mathbb{F} \in \mathcal{B}_{\varepsilon, \mathbb{O}} (\widehat{\mathbb{F}}^{\star})}  \mathbb{E}^{\mathbb{F}} [ U \left( \langle w, X \rangle \right) ] 
		&=
		\sup_{\lambda \geq 0} -  \lambda \varepsilon +  \frac{1}{N} \sum_{j=1}^{N}  \inf_{x \in \mathfrak{X} } \bigg( U \left( \langle w, x \rangle \right) + \lambda \int_{x^\star \in \mathfrak{X}^\star} \|x^{\star}-\widehat{x}_j^\star\| d\mathbb{O}(x^{\star} \mid x) \bigg).
	\end{align*}
	Introducing epigraph variables $a_j$, for $j =1, \ldots, N$, we reformulate the problem as 
	\begin{align}
		\inf_{\mathbb{F} \in \mathcal{B}_{\varepsilon, \mathbb{O}} (\widehat{\mathbb{F}}^{\star})}  \mathbb{E}^{\mathbb{F}} [ U \left( \langle w, X \rangle \right) ]\nonumber 
		&=
		\begin{cases}
			\displaystyle\sup_{\lambda \geq 0, a_j, \forall j} - \lambda \varepsilon +  \frac{1}{N} \sum_{j=1}^{N} a_j\\
			{\rm s.t.} \;  \displaystyle\inf_{x \in \mathfrak{X}} U \left( \langle w, x \rangle \right) + \lambda \int_{x^\star \in \mathfrak{X}^\star}  \|x^{\star}-\widehat{x}_j^\star\| d\mathbb{O}(x^{\star}\mid x)  \geq a_j,\quad j = 1, \dots, N.
		\end{cases}\nonumber
	\end{align}
	
	Restoring the outer maximization over $w \in \mathcal{W}$ in DRO problem~\eqref{eq: noise DRO} yields the desired equivalent representation:
	\begin{align*}
		\begin{cases}
			\displaystyle\sup_{w \in \mathcal{W}, \lambda \geq 0, a_j , \forall j} - \lambda \varepsilon +  \frac{1}{N} \sum_{j=1}^{N} a_j\\
			{\rm s.t.} \;  \displaystyle\inf_{x \in \mathfrak{X}}\displaystyle U \left( \langle w, x \rangle \right) + \lambda \int_{x^\star \in \mathfrak{X}^\star}  \|x^{\star}-\widehat{x}_j^\star\| d\mathbb{O}(x^{\star} \mid x)  \geq a_j,\quad j = 1, \dots, N.
		\end{cases}
	\end{align*}
	We verify that the problem above is a convex program.
	First, note that for any $\varepsilon >0$, the objective function is affine in $\lambda$ and $a_j$ for all $j = 1, \ldots, N$.
	Next, consider the constraint. 
	Fixed $x\in \mathfrak{X}$,
	the utility function~$U \left( \langle w, x \rangle \right)$ is concave in $w$, 
	and the term~$\int_{x^\star \in \mathfrak{X}^\star}  \|x^{\star}-\widehat{x}_j^\star\| d\mathbb{O}(x^{\star} \mid x)$ is constant. 
	Hence, the inner expression $U \left( \langle w, x \rangle \right) + \lambda \int_{x^\star \in \mathfrak{X}^\star}  \|x^{\star}-\widehat{x}_j^\star\| d\mathbb{O}(x^{\star} \mid x)$ is affine in $\lambda$ and concave in $w$.
	The pointwise infimum of a family of concave functions, with index by $x \in \mathfrak{X}$, is a concave function. Thus, the left-hand side of the constraint remains concave in $w$ and $\lambda$, while the right-hand side is linear in~$a_j$.
	Since the feasible set is defined by a semi-infinite family of convex inequalities and the objective is affine, the problem is a convex program.
	This concludes the proof.
\end{proof}


\begin{proof}[Proof of Theorem \ref{theorem: Sensitivity of the Value Function}] 
	The value function of the noisy-data DRO problem is given by
	$$
	g_{\rm noise}^*(\varepsilon) = \sup_{w \in \mathcal{W}} \left( \inf_{\mathbb{F} \in \mathcal{B}_{\varepsilon, \mathbb{O}}(\widehat{\mathbb{F}}^{\star})} \mathbb{E}^{\mathbb{F}} \left[ U(\langle w, X \rangle) \right] \right).
	$$
	Define the inner problem's value function for a fixed $w \in \mathcal{W}$ as:
	$$
	g_w^*(\varepsilon) := \inf_{\mathbb{F} \in \mathcal{B}_{\varepsilon, \mathbb{O}}(\widehat{\mathbb{F}}^{\star})} \mathbb{E}^{\mathbb{F}} \left[ U(\langle w, X \rangle) \right].
	$$
	The overall value function is then the supremum of these inner values: $g_{\rm noise}^*(\varepsilon) = \sup_{w \in \mathcal{W}} g_w^*(\varepsilon)$.

	For a fixed $w$, we analyze $g_w^*(\varepsilon)$ using Lagrangian duality. As shown in the proof of Lemma~\ref{lemma: An Equivalent Representation of Noisy-Data DRO}, strong duality holds, and we can write $g_w^*(\varepsilon)$ as the value of its dual problem:
	$$
	g_w^*(\varepsilon) = \sup_{\lambda \ge 0} \left\{ -\lambda\varepsilon + D_w(\lambda) \right\},
	$$
	where $D_w(\lambda)$ for a fixed $w$ is
	$$ 
	D_w(\lambda) := \frac{1}{N} \sum_{j=1}^{N} \inf_{x \in \mathfrak{X}} \left( U(\langle w, x \rangle) + \lambda \int_{x^\star \in \mathfrak{X}^\star} \|x^\star - \widehat{x}_j^\star\| d\mathbb{O}(x^\star \mid x) \right).
	$$
	We now establish the concavity of $D_w(\lambda)$ with respect to $\lambda$.
	To see this, we note that, for any fixed $w \in \mathcal{W}$ and $x \in \mathfrak{X}$, the expression inside the infimum is an {affine function} of $\lambda$.
	The function $\inf_{x \in \mathfrak{X}} \left( \cdot \right)$ is the pointwise infimum of a family of affine functions, which is {concave}.
	Finally, $D_w(\lambda)$ is a sum of such concave functions, which preserves concavity.
	
	Therefore, for any fixed $w \in \mathcal{W}$, function $D_w(\lambda)$ is concave in $\lambda$. This means that $g_w^*(\varepsilon)$ is found by solving a well-defined concave maximization problem. Moreover, for any fixed $\lambda \ge 0$, the objective function $-\lambda \varepsilon + D_w(\lambda)$ is affine (and therefore convex) in $\varepsilon$. Since $g_w^*(\varepsilon)$ is the pointwise supremum of a family of convex functions, it is itself a convex function of $\varepsilon$.
	Let $\lambda_w^*(\varepsilon)$ be the unique maximizer for the inner problem. The convexity of $g_w^*(\varepsilon)$ and uniqueness of the multiplier ensure that $g_w^*(\varepsilon)$ is differentiable with respect to $\varepsilon$, and its derivative is given by:
	$$
	\frac{d g_w^*(\varepsilon)}{d\varepsilon} = -\lambda_w^*(\varepsilon). \qquad(*)
	$$
	
	We now analyze the overall value function~$g_{\rm noise}^*(\varepsilon) = \sup_{w \in \mathcal{W}} g_w^*(\varepsilon)$. Let $w^*(\varepsilon)$ denote the optimal weight for a given $\varepsilon$. The assumptions of the theorem (uniqueness and continuity) ensure that the conditions for the Envelope Theorem, e.g., see \cite{sydsaeter2008further}, are satisfied for this outer problem. That is, the derivative of the value function with respect to a parameter is the partial derivative of the objective function $g_w^*(\varepsilon)$ with respect to that parameter, evaluated at the optimal choice of the decision variable $w = w^*(\varepsilon)$.
	$$
	\frac{d g_{\rm noise}^*(\varepsilon)}{d\varepsilon} = \left. \frac{d g_w^*(\varepsilon)}{d\varepsilon} \right|_{w=w^*(\varepsilon)}.
	$$
	Substituting the result from $(*)$, we get:
	$$
	\frac{d g_{\rm noise}^*(\varepsilon)}{d\varepsilon} = -\lambda_{w^*(\varepsilon)}^*(\varepsilon).
	$$
	Let us define $\lambda^*(\varepsilon) := \lambda_{w^*(\varepsilon)}^*(\varepsilon)$ as the optimal Lagrange multiplier corresponding to the globally optimal solution $(w^*(\varepsilon), \lambda^*(\varepsilon))$. This gives the final result:
	$$
	\frac{d g_{\rm noise}^*(\varepsilon)}{d\varepsilon} = -\lambda^*(\varepsilon).   \qedhere
	$$
\end{proof}

\begin{proof}[Proof of Theorem~\ref{theorem: opt values comparison}]
	For $\varepsilon > 0$, we begin by recalling the Direct DRO problem~\eqref{eq: DRO no noise} and the Noisy-Data DRO problem~\eqref{eq: convex DRO noise}:
	
	Direct DRO:
	\begin{align*}
		g^*(\varepsilon) &=
		\begin{cases}
			\displaystyle\sup_{w \in \mathcal{W}, \lambda \geq 0, a_j, \forall j} - \lambda \varepsilon +  \frac{1}{N} \sum_{j=1}^{N} a_j\\
			{\rm s.t.} \;  \displaystyle\inf_{x \in \mathfrak{X}} U( \langle w, x \rangle ) + \lambda \|x - \widehat{x}_j^\star\|  \geq a_j,\quad   j = 1, \dots, N.
		\end{cases}
	\end{align*}
	Noisy-Data DRO: 
	\begin{align*}
		g^*_{\rm noise}(\varepsilon) 
		&=
		\begin{cases}
			\displaystyle\sup_{w \in \mathcal{W}, \lambda \geq 0, a_j, \forall j} - \lambda \varepsilon +  \frac{1}{N} \sum_{j=1}^{N} a_j\\
			{\rm s.t.} \;  \displaystyle\inf_{x \in \mathfrak{X}} U( \langle w, x \rangle ) + \lambda \int_{x^\star \in \mathfrak{X}^\star}  \|x^{\star}-\widehat{x}_j^\star \| d\mathbb{O}(x^{\star} \mid x)  \geq a_j,\quad  j = 1, \dots, N.
		\end{cases}
	\end{align*} 
	To prove $g_{\rm noise}^*(\varepsilon) \geq g^*(\varepsilon)$, it suffices to show that for every feasible triplet $(w, \lambda, \{a_j\} )$ in the direct DRO model, the same triplet is feasible in the noisy-data DRO. Fix any $w \in \mathcal{W}$, $\lambda \geq 0$, and define the set of $\{a_j\}$ satisfying the constraints in each model.
	In the noisy-data DRO, for each~$j$, we require
	\[
	U( \langle w, x \rangle ) + \lambda \int_{x^\star \in \mathfrak{X}^\star}  \|x^{\star}-\widehat{x}_j^\star \| d\mathbb{O}(x^{\star} \mid x)  \geq a_j,\quad x \in \mathfrak{X}
	\]
	Observe that  for all $ x \in \mathfrak{X}$ and for each index $j$, the quantity
	\begin{align} \label{ineq: useful inequality on noisy constraint}
		\int_{x^\star \in \mathfrak{X}^\star}  \|x^{\star}-\widehat{x}_j^\star\| d\mathbb{O}(x^{\star}|x) 
		&= 
		\mathbb{E}^{\mathbb{O}(\cdot|x)} [\|X^\star-\widehat{x}_j^\star\| \mid X=x] \notag \\
		&\geq 
		\| \mathbb{E}^{\mathbb{O}(\cdot|x)} [ X^\star-\widehat{x}_j^\star \mid X=x]\| \notag \\
		&=\| \mathbb{E}^{\mathbb{O}(\cdot|x)} [ X^\star \mid X=x]-\widehat{x}_j^\star \| \notag \\
		& = \| x - \widehat{x}_j^\star \|, 
	\end{align}
	where the inequality follows from Jensen's inequality, as any norm $\|\cdot\|$ is a convex function.
	The last equation holds due to the mean-preserving assumption, which gives
	$
	\mathbb{E}^{\mathbb{O}(\cdot \mid x)} [ X^\star \mid X=x] = x
	$ 
	for all~$x \in \mathfrak{X}$.
	Thus,  with $\lambda \geq 0$, for $ x \in \mathfrak{X}$ and $ j = 1, \dots, N,$
	\[
	U( \langle w, x \rangle ) + \lambda \int_{x^\star \in \mathfrak{X}^\star}  \|x^{\star}-\widehat{x}_j^\star \| d\mathbb{O}(x^{\star} \mid x) \geq  	U( \langle w, x \rangle )  + \lambda \|x - \widehat{x}_j^\star\|.
	\]
	Taking the infimum over $x$ yields:
	\[
	\inf_{x\in \mathfrak{X}}U( \langle w, x \rangle ) + \lambda \int_{x^\star \in \mathfrak{X}^\star}  \|x^{\star}-\widehat{x}_j^\star \| d\mathbb{O}(x^{\star} \mid x) \geq  \inf_{x\in \mathfrak{X}}	U( \langle w, x \rangle )  + \lambda \|x - \widehat{x}_j^\star\|.
	\]
	Therefore, for any fixed $(w, \lambda, \{a_j\})$, the noisy-data model's constraint is less restrictive, and its feasible set is larger. Moreover, because both problems maximize the same objective over their respective feasible sets, we conclude $g^{\star}_{\mathrm{noise}}(\varepsilon) \ge g^{\star}(\varepsilon)$.
\end{proof}

\begin{proof}[Proof of Proposition~\ref{proposition: Robustness Bound under Biased Noise}]
	The proof of Theorem~\ref{theorem: opt values comparison} relies on the inequality \eqref{ineq: useful inequality on noisy constraint}. We revisit this step under biased noise. By Jensen's inequality:
	$
	\mathbb{E}^{\mathbb{O}(\cdot \mid x)} [\|X^\star-\widehat{x}_j^\star\|] \geq \|\mathbb{E}^{\mathbb{O}(\cdot \mid x)} [X^\star] - \widehat{x}_j^\star\| = \|x + b(x) - \widehat{x}_j^\star\|.
	$
	Using the reverse triangle inequality, we have:
	$$
	\|x + b(x) - \widehat{x}_j^\star\| = \|(x - \widehat{x}_j^\star) + b(x)\| \geq \|x - \widehat{x}_j^\star\| - \|b(x)\| \geq \|x - \widehat{x}_j^\star\| - \delta,
	$$
	where the last inequality holds by using $\|b(x)\| \le \delta$. Therefore, it follows that
	\begin{equation} \label{ineq:biased_bound}
		\mathbb{E}^{\mathbb{O}(\cdot \mid x)} [\|X^\star-\widehat{x}_j^\star\|] \ge \|x - \widehat{x}_j^\star\| - \delta.
	\end{equation}
	Now, let $(w^*, \lambda^*, \{a_j^*\})$ be an optimal solution for the direct problem, $g^*$. By definition, it is feasible, hence, for all $x, j$,
	$
	a_j^* \le U(\langle w^*, x \rangle) + \lambda^* \|x - \widehat{x}_j^\star\|.
	$
	Consider the solution $(w^*, \lambda^*, \{a_j' = a_j^* - \lambda^*\delta\})$. Let us check if this solution is feasible for the biased-noise problem. The feasibility condition for the biased-noise problem is $a_j' \le U(\langle w^*, x \rangle) + \lambda^* \mathbb{E}^{\mathbb{O}(\cdot \mid x)} [\|X^\star-\widehat{x}_j^\star\|]$. Substituting $a_j'$ and using the bound~\eqref{ineq:biased_bound}:
	\begin{align*}
		U(\langle w^*, x \rangle) + \lambda^* \mathbb{E}^{\mathbb{O}(\cdot|x)} [\|X^\star-\widehat{x}_j^\star\|] &\ge U(\langle w^*, x \rangle) + \lambda^* (\|x - \widehat{x}_j^\star\| - \delta) \\
		&= \left( U(\langle w^*, x \rangle) + \lambda^* \|x - \widehat{x}_j^\star\| \right) - \lambda^*\delta \\
		&\ge a_j^* - \lambda^*\delta = a_j'.
	\end{align*}
	Thus, the solution $(w^*, \lambda^*, \{a_j^* - \lambda^*\delta\})$ is feasible for the biased-noise problem. The objective value for this feasible solution is:
	$$
	-\lambda^*\varepsilon + \frac{1}{N}\sum_{j=1}^N a_j' = -\lambda^*\varepsilon + \frac{1}{N}\sum_{j=1}^N (a_j^* - \lambda^*\delta) = \left(-\lambda^*\varepsilon + \frac{1}{N}\sum_{j=1}^N a_j^*\right) - \lambda^*\delta = g^* - \lambda^*\delta.
	$$
	Since $g_{\text{noise}, \delta}^* (\varepsilon)$ is the supremum over all feasible solutions, it must be at least as large as the value for this particular feasible solution. Therefore, $g_{\text{noise}, \delta}^* (\varepsilon) \geq g^*(\varepsilon) - \lambda^* \delta$. 
\end{proof}

\begin{proof}[Proof of Lemma~\ref{lemma: Dominance of the Price of Ambiguity}] 
	To establish a proof of part~$(i)$, note that $\mathtt{POA}_{\text{noise}}(\varepsilon) =  \frac{\text{SYSTEM} - g_{\rm noise}^*(\varepsilon)}{\text{SYSTEM}}$.
	By Theorem~\ref{theorem: Sensitivity of the Value Function}, the value function $g_{\rm noise}^*(\varepsilon)$ is differentiable with respect to $\varepsilon$, and its derivative is given by $\lambda^*(\varepsilon)$.
	Hence $\mathtt{POA}_{\text{noise}}(\varepsilon)$ is differentiable, and differentiating it with respect to $\varepsilon$ yields:
	\begin{align*}
		\frac{d  \mathtt{POA}_{\rm noise}(\varepsilon)}{d\varepsilon} &=
		-\frac{d g_{\rm noise}^*(\varepsilon)}{d\varepsilon} \cdot \frac{1}{\text{SYSTEM}} \\
		&= \lambda^*(\varepsilon)\cdot \frac{1}{\text{SYSTEM}}.
	\end{align*} 
	Since $\lambda^*(\varepsilon) \geq 0$ and $\text{SYSTEM} > 0$, the derivative is non-negative, and hence $\mathtt{POA}_{\text{noise}}(\varepsilon)$ is nondecreasing in $\varepsilon$.
	A similar argument applies to the direct DRO case, differentiating the value function $g^*(\varepsilon)$ in the same way shows that its derivative is non-negative, and thus $\mathtt{POA}$ is also nondecreasing.
	
	To prove part $(ii)$, it remains to show that the required dominance of $\mathtt{POA}$ holds. Indeed,~for $\varepsilon > 0$, using Theorem~\ref{theorem: opt values comparison}, since $g_{\rm noise}^*(\varepsilon) \geq g^*(\varepsilon)$ for all~$\varepsilon > 0$, the desired inequality holds:
	\begin{align*}
		&\mathtt{POA}(\varepsilon) \geq \mathtt{POA}_{\rm noise}(\varepsilon). \qedhere
	\end{align*}
\end{proof}

\section{Noise Models and Fairness}
\subsection{Fairness Model Formulations in Section~\ref{section: Illustrative Examples}} \label{sec: Fairness Model Formulations}  

Applying the $\alpha$-fairness utility function in Definition~\ref{definition: modified alpha fairness} to the two models $\texttt{DRO}_{\rm {noise}}$ and $\texttt{DRO}$, presented in Lemma~\ref{lemma: An Equivalent Representation of Noisy-Data DRO} and Remark~\ref{remark: noise free DRO}, we obtain the following equivalent formulations, which are used in our empirical~study.

$(i)$
The $\texttt{DRO}_{\rm {noise}}$ model:
\begin{align*}
	&\displaystyle\sup_{w \in \mathcal{W}, \, \lambda \geq 0, \, a_j, \, z_{x^\star, j}, \forall j, \forall x^\star} - \lambda \varepsilon +  \frac{1}{N} \sum_{j=1}^{N} a_j\\
	&{\rm s.t.} \;  
	\displaystyle\min_{x \in \mathfrak{X}}  \sum_{i=1}^n \frac{1}{1-\alpha}\left(  (w_i x_i + 1 )^{1 - \alpha} -1 \right) + \sum_{x^\star \in \mathfrak{X}^\star}  \langle z_{x^\star, j}, x^\star - \widehat{x}_j^\star \rangle  \mathbb{O}(x^\star \mid x)  \geq a_j,\quad  j = 1, \dots, N ; \notag \\
	&\quad\;\; \|z_{x^\star, j} \|_* \leq \lambda, \quad x^\star \in \mathfrak{X}^\star,\; j= 1, \dots, N. \notag
\end{align*}

$(ii)$
The $\texttt{DRO}$ model:
\begin{align*}
	&\displaystyle\sup_{w \in \mathcal{W}, \, \lambda \geq 0, \, a_j, \, z_{j}, \forall j} - \lambda \varepsilon +  \frac{1}{N} \sum_{j=1}^{N} a_j\\
	&{\rm s.t.} \;  
	\displaystyle \min_{x \in \mathfrak{X}}  \sum_{i=1}^n \frac{1}{1-\alpha}\left(  (w_i x_i  + 1 )^{1 - \alpha} -1 \right) +  \langle z_{j}, x -\widehat{x}_j^\star \rangle \geq a_j,\quad  j = 1, \dots, N;\notag \\
	&\quad\;\; \|z_{j} \|_* \leq \lambda, \quad j= 1, \dots, N. \notag
\end{align*}

\subsection{Noise Distribution Models in Section~\ref{section: Illustrative Examples}}\label{appendix: noise distribution}

Below, we provide the probability mass function (PMF) $\mathbb{P}(E=e)$ and support sets $\mathfrak{E}$ for the additive noise models tested in Section~\ref{section: Illustrative Examples}, along with their parameter settings.
In our illustrative example, data are rescaled to lie in~$[0, 1]$.
Let $\text{diam}(\mathfrak{X}) := \max_{x, x' \in \mathfrak{X}}\| x - x' \|_1 $ and define $d := \frac{\text{diam}(\mathfrak{X})}{n}$. 
Since the data are rescaled, we have $d=1$.
We now specify the support and distribution of the noise $E$ under different models.

$(i)$ Discrete truncated normal with parameters $\mu$, $\sigma$, $a$, and $b$:
The support is a discretized grid of points within the set $\mathfrak{E} = \{ e \in \mathbb{R}^n : a \leq e_i \leq b, \ i = 1, \ldots, n\} $, where  $-1 < a < b < \infty$. The distribution is
$$
\mathbb{O}(x^\star \mid x) = \mathbb{P} (E = e) = \phi_{\rm trunc}(e; \mu, \sigma, a, b),
$$ 
where $\phi_{\rm trunc}(e; \mu, \sigma, a, b)$ is the truncated normal probability mass function with mean $\mu$, standard deviation $\sigma$, and truncation bounds $a$, $b$.
In our example, we set $\mu = 0$, $\sigma$ equal to the standard deviation of the data, and $a = -0.01$, $b= 0.01$.

$(ii)$ Discrete softmax with parameters $(a, b)$:
The support set is $\mathfrak{E} = \{ e \in \mathbb{R}^n : a  \leq e_i \leq b, \ i = 1, \ldots, n\} $, where $-1 < a < b < \infty$. The distribution is 
$$
\mathbb{O}(x^\star \mid x) = \mathbb{P} (E = e) = \frac{\exp \left(\frac{\|e\|_1}{\text{diam}(\mathfrak{X})}\right)}{\sum_{e' \in \mathfrak{E}} \exp \left( \frac{\|e'\|_1}{\text{diam}(\mathfrak{X})} \right)}
.$$ 
In our example, we use $a=-0.01$ and $b=0.01$

$(iii)$ Bernoulli with parameters $(p, a)$:
The support set is $\mathfrak{E} = \{ e \in \mathbb{R}^n : e_i \in \{0 , \ a \}, \ i = 1, \ldots, n\} $, where~$a \geq 0$.
The distribution is
$$
\mathbb{O}(x^\star \mid x)  = \mathbb{P} (E = e) = \prod_{i = 1}^n p^{\mathbb{1}_{ \{e_i = 0.01\} }} (1- p)^{\mathbb{1}_{ \{e_i = 0\} }} ,
$$
where $\mathbb{1}_{\{\cdot\}}$ is the indicator function, and $0 \leq p \leq 1$ is probability of that the noise $e_i$ takes the value $a$.
In our example, we set $p=0.5$ and $a = 0.01$.

$(iv)$ Binomial with parameters $(p, m, a)$:
The support is $\mathfrak{E} = \{ e \in \mathbb{R}^n : e_i = k_i a, \ k_i = 0, 1, \cdots , m, \ i = 1, \ldots, n \}$, where $0 \leq p \leq 1$ is the success probability, $m \in \mathbb{N}$ is the number of trials, and $a \geq 0$ is the step size. The distribution is
$$
\mathbb{O}(x^\star \mid x) = \prod_{i = 1}^n \left( \begin{array}{c} m \\ k_i \end{array} \right) p^{k_i} (1 - p)^{m - k_i}
, \quad k_i = 0, 1, \cdots , m.
$$ 
We use $p=0.5$, $m = 2$, and $a = 0.001$ in our example.

$(v)$ Poisson with parameters $(\lambda, a)$:
The support is $\mathfrak{E} = \{ e \in \mathbb{R}^n : e_i = k_i a, \ k_i \in \mathbb{N}, \ i = 1, \ldots, n \}$, where $\lambda > 0$ is the Poisson rate parameter and $a$ is the step size. The distribution is
$$
\mathbb{O}(x^\star \mid x) = \prod_{i = 1}^n \frac{\exp(-\lambda) \lambda^{k_i}}{k_i!}, \quad k_i \in \mathbb{N}
.$$ 
In our example, we use $\lambda = 0.1$ and $a = 0.01$.

\end{document}